\numberwithin{equation}{section}
\newtheorem{theorem}{Theorem}[section]
\newtheorem{proposition}[theorem]{Proposition}
\newtheorem{lemma}[theorem]{Lemma}
\newtheorem{prop}[theorem]{Proposition}
\theoremstyle{definition}
\newtheorem{definition}[theorem]{Definition}
\newtheorem{remark}[theorem]{Remark}
\newtheorem{example}[theorem]{Example}
\newtheorem{Conjecture}[theorem]{Conjecture}
\newcommand\q{\mathfrak q}
\newcommand\half{\tfrac{1}{2}}
\DeclareMathOperator{\ad}{ad}
\newcommand\ov{\overline}
\newcommand\hvee{\mathbf{h}^\vee}
\newcommand\be{\beta}
\newcommand\g{\mathfrak g}
\newcommand\h{\mathfrak h}
\newcommand\D{\Delta}
\renewcommand\l{\lambda}
\newcommand\Dp{\Delta^+}
\renewcommand\d{\delta}
\renewcommand \dim{\mathrm{ dim}\,}
\renewcommand\a{\alpha}
\renewcommand\aa{\mathfrak a}
\renewcommand\k{\mathbf k}
\newcommand{\Z}{\mathbb Z}
\newcommand\nat{\mathbb N}
\newcommand\s{\sigma}
\renewcommand\aa{\mathfrak a}
\newcommand\la{\lambda}
\newcommand\C{\mathbb C}
\newcommand\R{\mathbb R}
\newcommand\si{\sigma}
\newcommand{\ZZ}{\mathbb{Z}}
\newcommand{\sdim}{\text{\rm sdim}}
\newcommand{\langlerangle}{\langle \, . \, , 
. \, \rangle}
\newcommand{\vac}{|0\rangle}
\newcommand{\vacc}{{\mathbf 1}}
\newcommand{\bea}{\begin{eqnarray}}
\newcommand{\eea}{\end{eqnarray}}
\newcommand{\Ws}{W_k^{\min}(\g)}
\newcommand{\Wu}{W^k_{\min}(\g)}
\begin{document}

\title{Extremal unitary representations of big $N=4$  superconformal algebra}
\author[Victor~G. Kac, Pierluigi M\"oseneder Frajria,  Paolo  Papi]{Victor~G. Kac\\Pierluigi M\"oseneder Frajria\\Paolo  Papi}

\begin{abstract} In this paper we give a detailed proof of the classification of extremal (=massless) unitary highest weight representations in the Neveu Schwarz and Ramond sectors of the $big\ N=4$ superconformal algebra which can be found in \cite{Gunaydin}. Our results agree with the general conjectures about classification  of unitary highest weight representation of minimal $W$-algebras attached to basic Lie superalgebras formulated in \cite{KMP1}, \cite{KMPR}, and complete their proof for the $big\ N=4$ superconformal algebra.
\end{abstract}

\maketitle

\tableofcontents
\section{Introduction}
A number  of works on unitarity of highest weight representation of infinite-dimensional Lie algebras and superalgebras was published in the 1980's; one can find an extensive list of references in  \cite{AKMP, ET1, Gunaydin, Kazamachar, M}. In several papers, including \cite{DL}, \cite{KMP}, the problem was put in the framework of vertex operator algebras (VOA), for which the eigenvalues of the zero mode $L_0$ of the conformal vector lie in $\half\Z_{\ge 0}$. In this language, given a conjugate linear involution $\phi$ of a VOA $V$,  a Hermitian form on an irreducible representation $M$ of $V$ is called {\it $\phi$-invariant} if the  adjunction relation 
$$(a^M_n)^*=(-1)^{\D+2\D^2}\phi(a)^M_{-n}$$
holds for any quasi-primary element $a\in V$ of conformal weight $\D(\in \half\Z_{\ge 0})$, and $M$ is called {\it unitary} if the Hermitian form is positive definite.
\par
We proved in \cite{KMP} that a VOA $V$ endowed with a  conjugate linear involution $\phi$ admits a (unique up to a constact factor) $\phi$-invariant Hermitian form iff any $a\in V$ of conformal weight $1$ is quasi-primary. (Such VOA are called of strong CFT type. They are strongly generated by quasi-primary elements.).\par
A class of vertex algebra which has been extensively investigated in physical literature is that of superconformal algebras, which consists of  the Virasoro, Neveu-Schwarz, $N=2,3,4$ and big $N=4$  algebras. The study of their unitary representations has been a hot topic in the late 80's (see \cite{M,ET1,ET2,ET3,Gunaydin} and references therein). The  superconformal algebras were later realized as (an extension of) a special case of a relevant class of vertex algebras, the
 minimal simple $W$-algebras $\Ws$, associated to a basic simple finite-dimensional Lie superalgebra $\g$, its ``minimal'' $sl_2$-subalgebra $\mathfrak s$, and the level $k\in \C$ (\cite{KRW, KW1}). In particular, the big $N=4$ algebras is obtained from $W^k_{\min}(D(2,1;a))$ by adding four fermions and one boson (cf. \eqref{WN=3embedding}).  \par
  In the paper \cite{KMP1} we began to study in a systematic way the  unitarity  of the VOA $\Ws$.
 We showed that, unless $k$ is a collapsing level (i.e., $\Ws$ ``collapses'' to its affine subalgebra), the VOA $\Ws$ can be unitary only if the conjugate linear involution of $\g$ which induces that of $\Ws$ is ``almost compact'', and then it is essentially unique. We went  on to classify in \cite{KMP1} the values of $k$ for which the minimal simple $W$-algebras are unitary, and to classify their unitary highest weight representations. In \cite{KMPR} we extended these results to the Ramond sector. It turned out that different methods work for non-extremal and extremal representations (physicists call them massive and massless, respectively \cite{ET1, Gunaydin, M}), which, roughly  speaking, means that they enter or do not enter in ``continuous''  families, respectively. We refer to  Section 2 for  precise definitions and statements. \par
The non-extremal unitary highest weight representations of minimal $W$-algebras were  studied using the generalized Fairlie construction \cite{KMP, KMPSF}, and 
the determinant of the Hermitian form,  which allowed us  to completely and uniformly classify them.
 However there seems to be no  unified approach to the construction of massless unitary representations for minimal $W$-algebras, though a unified conjectural list is given in \cite{KMP1}.
 In all known cases they are constructed using a free field realization \cite{ET1, KMP1,  M}, or a coset construction \cite{Gunaydin}. \par
 The  main purpose of the present paper is to prove unitarity of the extremal  highest weight modules, both untwisted and Ramond twisted,  over the big $N=4$ superconformal algebra, confirming our conjectures \ref{NS} and \ref{R} and the results of \cite{Gunaydin}
(of which we were unaware when writing \cite{KMP1}, \cite{KMPR}). \par
It is well-known that for a compact Lie group $G$ and its closed connected subgroup $A$ the coset construction of the $N=2$ superconformal algebra is possible if the manifold $G/A$ carries an invariant integrable complex 
 structure \cite{Kazamachar,Kazamanew}. 
According to the paper \cite{STVP} of  Sevrin et al., there is a coset construction of the  big $N=4$ superconformal algebra if $G/A$ admits an invariant  hypercomplex structure. A general construction of such a structure was given in Joyce \cite{Joyce}, and in Section 6 we recall this construction, following \cite{BGP}.
Applying this construction in Section 7 to the coset $SU(n)/SU(n-2)$ we obtain in  Section 8  a vertex algebra homomorphism
\begin{equation}\label{01}
W^{k}_{\min}(D(2,1;a))\to V^{k_1}(sl_n)\otimes F,
\end{equation}
where $k=-\frac{(n-1)(k_1+1)}{k_1+n},\,a=\frac{k_1+1}{n-1}$ and $F$ is a vertex algebra of free fermions.\par
In Section 9 we explain how homomorphism \eqref{01} implies unitarity of non-twisted extremal modules over the big $N=4$ superconformal algebra, cf \cite{Gunaydin}.
The construction of fields of conformal weight $\frac{3}{2}$ of superconformal algebras uses the affine cubic Dirac operators, constructed by Kac and Todorov in \cite{KacT} (see also \cite[Section 4.3]{DK}, \cite{KMPD}), and used to construct unitary modules over the $N=1$ superconformal algebras. Kazama and Suzuki in \cite{Kazamachar} and \cite{Kazamanew} extended this construction to  $N=2$ superconformal algebras, as coset modules attached to integrable complex
structures on compact homogeneous spaces. We recall these constructions in Sections 3,4,5, as an introduction to the more difficult construction in the remainder of the paper. 
\par
In \cite{Gunaydin}, the unitarity of highest weight modules  in the Ramond sector for the big $N=4$ algebra  is  deduced 
from analogous results in the Neveu-Schwarz sector by using  {\it spectral flow}. In 
\cite{KMPSF} we generalize this approach by providing a spectral flow for all minimal $W$-algebras. In Section 10  we directly   construct the  extremal unitary representations of the big $N=4$ algebra in the Ramond sector,  without resorting to the untwisted case. \par
Unfortunately we were unable to find so far the  geometric structure on $G/A$, for which the coset costruction produces all the massless unitary representations of the remaining minimal $W$-algebras $\Ws$, where $\g=spo(2|m)$ with $m>4$, $F(4)$ and $G(3)$, whose conjectural list is given in \cite{KMP1}, and in \cite{KMPR} for the Ramond sector.\par
 Little is known about the classification of unitary highest weight representations of $W_k(\g,\mathfrak s)$ where $\mathfrak s$ is a non-minimal $sl_2$ subalgebra of $\g$. It is proved in \cite{KMP} that these VOA are of strong CFT  type. It is also proved in \cite[Remark 7.10]{KMP} that for a 
 non-collapsing level $k$, $W_k(\g,\mathfrak s)$ can be unitary  only if the grading of $\g$ defined by a non-zero  semisimple element of $\mathfrak s$ is compatible with the parity of the superalgebra $\g$. In the case when $\g$ is a Lie algebra this means that a non-zero nilpotent element of $\mathfrak s$ is even.

\section{Unitarity of highest weight representations of minimal $W$-algebras}
Let $\g$ be a simple finite-dimensional Lie superalgebra, over $\C$, with a reductive even part $\g_{\bar 0}$ and invariant
non-degenerate bilinear form $(\cdot |\cdot)$, with restriction to $\g_{\bar 0}$ non-degenerate.
 Let $\mathfrak s=Span\{e,x,f\}$, where $[e,f]=x, [x,e]=e, [x,f]=-f$ be an $sl_2$ subalgebra of $\g_{\bar 0}$. To the datum
 $(\g,\mathfrak s, k\in\C)$ one associates the universal quantum affine $W$-algebra $W^k(\g,\mathfrak s)$ of level $k$ by the quantum Hamiltonian reduction
 \cite{KRW}, \cite{KW1}. If $k$ is different from the critical level $k_{crit}$ (which we assume), the vertex algebra $W^k(\g,\mathfrak s)$ has a unique maximal ideal, and the quotient by this ideal is a simple  $W$-algebra, denoted by $W_k(\g,\mathfrak s)$. These VOA are of strong CFT  type \cite{KMP}.
 
 The {\it minimal} $W$-algebras $\Wu$ correspond to a choice of $\mathfrak s$, called minimal, for which the
 $ad\,x$-eigenspace decomposition is of the form 
 \begin{equation}\label{1.1}
 \g=\g_{-1}\oplus\g_{-1/2}\oplus\g_0\oplus\g_{1/2}\oplus\g_1,\quad \text{ where $\g_{-1}=\C f,\,
 \g_{1}=\C e$}.\end{equation}
 We normalize the bilinear form $(\cdot |\cdot)$ by the condition $(x|x)=\half$. Then $k_{crit}=-h^\vee$, where 
 $h^\vee$ is half of the eigenvalue of the Casimir operator on $\g$. The decomposition \eqref{1.1} and the numbers $h^\vee$ are listed in \cite[Tables 1-3]{KW1}.

As explained in the Introduction, in  order to define unitarity of a $W$-algebra, one needs a conjugate linear involution $\phi$ of $\g$, which fixes the subalgebra $\mathfrak s$ pointwise. Then, provided that $k\in\R$, $\phi$ induces a conjugate linear involution  of the vertex algebra $\Wu$ that  descends to $\Ws$.

It is proved in \cite[Proposition 7.2]{KMP1} that for  a non-collapsing level $k\in\R$, any conjugate linear involution $\phi$ of the vertex algebra $\Wu$ is necessarily induced by a conjugate linear involution $\phi$ of $\g$ fixing $\mathfrak s$. (Recall that $k$ is called a collapsing level if $\Ws$ is isomorphic to its affine part.) Moreover, it is proved in \cite[Proposition 8.9]{KMP1} that the vertex algebra $\Ws$
can be  unitary only if the centralizer $\g^\natural$ of $\mathfrak s$ in $\g$ is a semisimple subalgebra of $\g_{\bar0}$, and the conjugate linear  involution $\phi$ is {\sl almost compact}, i.e. it restricts to a compact  involution of $\g^\natural$, and it leaves $\{e,x,f\}$ fixed. We write $\g^\natural=\oplus_i\g^\natural_i$, where $\g^\natural_i$ are simple components of $\g^\natural$.

We prove in  \cite{KMP1} that an almost compact conjugate linear involution of $\g$ exists if and only if $\g$ is from the following lists:
\begin{equation}\label{isunitary}
psl(2|2),\ spo(2|m) \text{ for }m\ge0, D(2,1;a)\text{ for }a\in \R,\ F(4),\ G(3);
\end{equation}
\begin{equation}\label{nonunitary}
sl(2|m)\text{ for }m\ge3,\ osp(4|m)\text{ for }m>2\text{ even},
\end{equation}
and it is essentially unique. 
Recall that, for $k\ne k_{crit}$,  the vertex algebra $\Wu$ is of strong CFT type with Virasoro field $L=\sum_{n\in\Z}L_nz^{-n-2}$, and it is strongly and freely generated by the operators $L_n$, $n\in\Z$, and the Fourier coefficients of the primary fields $J^{\{a\}}(z)=\sum_{n\in\Z} J^{\{a\}}_{n}z^{-n-1}$, $a\in\g^\natural$, of conformal weight 1, and $G^{\{u\}}(z)=\sum_{n\in\half+\Z}G^{\{u\}}_nz^{-n-\tfrac{3}{2}}$, $u\in\g_{-1/2}$, of conformal weight $\tfrac{3}{2}$ \cite[Theorems 4.1 and 5.1]{KW1}. The $\lambda$-brackets among these generators are displayed in \cite{KRW, KW1, AKMPP}.\par
When $\g=spo(2|m)$, $m=0,1$, and $2$, the $W$-algebra $\Wu$ is the universal Virasoro, Neveu-Schwarz, and $N=2$ vertex algebra, respectively, for which unitarity  is well understood and is as follows.
Up to isomorphism,  these vertex algebras depend only on the central charge $c(k)$, given by  
\begin{equation}\label{ccc}c(k)=\frac{k\,d}{k+h^\vee}-6k+h^\vee-4,\text{ where $d=\sdim\g$.}\end{equation} Putting $k=\frac{1}{p}-1$ in \eqref{ccc} in all three cases, we obtain 
\begin{align}
\label{1}c(k)&=1-\frac{6}{p(p+1)} \quad \text{for   Virasoro vertex algebra,}\\
\label{22}c(k)&=\frac{3}{2}\left(1-\frac{8}{p(p+2)}\right)\quad \text{for   Neveu-Schwarz vertex algebra,}\\
\label{3}c(k)&=3\left(1-\frac{2}{p}\right)\quad \text{for   $N=2$ vertex algebra.}
\end{align}
The following theorem is a result of several papers, published in the 80s in physics and mathematics literature, see e.g. \cite{AKMPP, ET3, Gunaydin,  M} for references.
\begin{theorem}The complete list of unitary $N=0,1,$ and $2$ vertex algebras is as follows: either $c(k)$ is given by \eqref{1}, \eqref{22}, or \eqref{3}, respectively, for $p\in \mathbb Z_{\ge 2},$  or  $c(k)\ge 1, \frac{3}{2}$ or $3$, respectively. 
\end{theorem}
\par
For the remaining cases, we proved in \cite[Proposition 8.19]{KMP1} that, for $k\ne k_{crit}$, the minimal $W$-algebra $\Ws$ is not unitary for $\g$ from the list \eqref{nonunitary}, except when $\g=sl(2|m)$, $m\ge3$, and the level is the collapsing level $k=-1$. Furthermore, we proved in \cite[Corollary 11.2]{KMP1} that, for $\g$ from the list \eqref{isunitary}, the vertex algebra $\Ws$ is non-trivial   unitary for $k\ne k_{crit}$ if and only if $k$ lies in the {\sl unitary range}, given in the  following Table \ref{tabel0}, where we also display the critical values of $k$ and the collapsing values   $k_0$ for which $\dim W_{k_0}^{\min}(\g)=1$:
\renewcommand{\arraystretch}{1.4}
\begin{center}
\begin{tabular}{c | c| c |c   }
$\g$&
unitary range&
$k_{crit}$& $k_0$\\
\hline
$psl(2|2)$&$-(\nat+1)$&$0$&$-1$\\
\hline
$spo(2|3)$&$-\tfrac{1}{4}(\nat+2)$&$-\half$&$-\half$\\
\hline
$spo(2|m),\,m\ge4$&$-\tfrac{1}{2}(\nat+1)$&$\tfrac{m}{2}-2$&$-\half$\\
\hline
$D(2,1;\tfrac{m}{n})$&$-\tfrac{mn}{m+n}\nat,\ m,n\in\nat\text{ coprime},\, (m,n)\ne(1,1)$&$0$& \text{none}\\
\hline
$F(4)$&$-\tfrac{2}{3}(\nat+1)$&$2$&$-\tfrac{2}{3}$\\
\hline
$G(3)$&$-\tfrac{3}{4}(\nat+1)$&$\tfrac{3}{2}$&$-\tfrac{3}{4}$
\end{tabular}
 \captionof{table}{\label{tabel0}}
\end{center}
 In our paper \cite{KMP1}, we also studied unitarity of irreducible highest weight $\Wu$-modules $L^W(\nu,\ell_0)$, where $\g$ is one of the Lie superalgebras from Table \ref{tabel0} (with the exception of $spo(2|m)$, $m\le 2$, for which the answer is well known), and $k$ lies in the unitary range. These modules are parametrized by pairs $\nu\in (\h_\R^\natural)^*$ and $\ell_0\in\R$. The following are necessary conditions for  unitarity of $L^W(\nu,\ell_0)$: 
 \begin{enumerate}
 \item[(NS1)] the affine levels $M_i(k)$  for $\g^\natural_i$,  explicitly displayed in \cite[Table 2]{KMP1}, are non-negative integers;
 \item[(NS2)] $\nu\in P^+_k=\{\text{dominant integral weights for }\g^\natural\text{ such that }\nu(\theta_i^\vee)\le M_i(k)\}$ where $\theta_i$ are highest roots of $\g_i^\natural$.
 \item[(NS3)]$\ell_0\ge A(k,\nu)$, where $A(k,\nu)$ is defined in \cite[formula (8.11)]{KMP1}, and $\ell_0=A(k,\nu)$ if $\nu$ is an extremal weight (i.e. $\nu(\theta_i^\vee)>M_i(k)+\chi_i$ for some $i$, $\chi_i$ being displayed in \cite[Table 2]{KMP1}). \end{enumerate}
 \begin{theorem}\cite{KMP1} The highest weight modules
\begin{equation}\label{hwu}
\left\{L^W(\nu,\ell_0)\mid \nu\in P^+_k,\nu\text{ non-extremal, }\ell_0\ge A(k,\nu)\right\}
\end{equation}
are unitary.
\end{theorem}
\begin{Conjecture}\label{NS}
The set of unitary highest weight representations of $\Wu$ is the union of \eqref{hwu} and 
\begin{equation*}
\left\{L^W(\nu,A(k,\nu))\mid \nu\in P^+_k,\nu\text{ extremal}\right\}.
\end{equation*}
\end{Conjecture}

 Actually in \cite{KMP1} we studied unitarity of the $\Wu$-modules; however it has been proved in \cite[Theorem 5.1]{AKMP} that any unitary $\Wu$-module descends to the simple $W$-algebra $\Ws$. 
 
The Ramond case was dealt with in the paper \cite{KMPR}. We had to introduce a suitable notion of extremality, called {Ramond extremality} (cf. \cite[(9.3)]{KMPR}) and a constant $A_R(k,\nu)$ (cf. \cite[(6.31)]{KMPR}  in place of $A(k,\nu)$. 
There are two types  of these modules satisfying the necessary conditions of unitarity:
\begin{enumerate}
\item[(R1)] the modules $L_R^W(\nu,\ell)$ with $\nu\in P^+_k$ not Ramond extremal and $\ell\ge  A_R(k,\nu)$;
\item[(R2)] the modules $L_R^W(\nu,\ell)$ with the weight $\nu\in P^+_k$  Ramond extremal,
in which case $
\ell=A_R(k,\nu)$.
\end{enumerate}
In case (R1) we prove that these modules are indeed unitary
using    the spectral flow \cite{KMPSF} (in \cite{KMPR} we proved the result either using  Kac-Wakimoto free field realization or assuming true the exactness of the twisted quantum Hamiltonian reduction functor which, in the untwisted case,  holds by the work of  Arakawa). In case (R2) we don't know how to prove unitarity. Summing up
\begin{theorem} \cite{KMPR} The highest weight Ramond twisted modules
\begin{equation}\label{hwur}
\left\{L^W_R(\nu,\ell_0)\mid  \nu \text{ is not  Ramond extremal}, \ell_0\ge A_R(k,\nu)\right\}
\end{equation}
are unitary.
\end{theorem}
\begin{Conjecture}\label{R}
The set of unitary highest weight representations of $\Wu$ is the union of \eqref{hwur} and
\begin{equation*}
\left\{L^W_R(\nu,A(k,\nu))\mid\nu \text{ is   Ramond extremal}\right\}.
\end{equation*}
\end{Conjecture}
In \cite{KMP1}, \cite{KMPR} we rephrased the results of \cite{M,ET1} on unitarity of  highest weight  modules for the $N=3,4$ superconformal algebras to provide support for 
Conjectures \ref{NS} and \ref{R}. We do the same in the present paper for the big $N=4$ superconformal algebra, so that our conjectures  hold for $\g=psl(2|2), spo(2|3)$ and $D(2,1;a)$, but remain open for the extremal and Ramond extremal  representations of the minimal $W$-algebra,  attached to $\g=spo(2|m)$ with $m>4$, $F(4)$ and $G(3)$.
\begin{remark} As in the present paper, the big $N=4$ superconformal algebra turns out to be a challenging and relevant case to test general conjectures. Very recently, 
Creutzig, Gaiotto and Linshaw found the decomposition rules  related to the conformal embedding  (discovered in \cite[Proposition 4.2 (III)]{AKMPP-JJM})
$$ V_{-\frac{a+3}{2}}(sl_2)\otimes V_{-\frac{a^{-1}+3}{2}}(sl_2)\hookrightarrow W^{\min}_{\frac{1}{2}}(D(2,1;a)).$$ Their result  (cf \cite[Corollary 2.6]{CGL}) provides evidences for some conjectures about vertex algebras which emerge in gauge
theory constructions associated to the geometric Langlands program. 
\end{remark}
\section{Compact complex homogeneous spaces and cubic Dirac operators}\label{setup}
Let $\mathbb G$ be a connected  compact group, $\mathbb A$ a connected closed subgroup. We say that  $\mathbb M=\mathbb G/\mathbb A$ carries an invariant  complex structure if $\mathbb M$ is a complex manifold  with $\mathbb G$ acting holomorphically. Let $\g_\R$ be the Lie algebra of $\mathbb G$ and $\aa_\R$ the Lie subalgebra of $\mathbb A$. Fix a $\mathbb G$-invariant negative definite  symmetric bilinear  form $(\cdot,\cdot)$ on $\g_\R$ and let $\mathfrak q_\R$ be the orthocomplement of $\aa_\R$ in $\q_\R$.  Let $\g$, $\aa$, and $\mathfrak q$ be the complexifications of $\g_\R$, $\aa_\R$, and $\q_\R$ respectively. 
 Let $J\in Hom_{\aa}(\q,\q)$ be the $\mathbb A$-invariant integrable almost complex structure corresponding to the complex  structure on $\mathbb M$. 
We make the further assumption that $(J(q),J(q'))=(q,q')$ for all $q,q'\in\q$. 
 
As explained in the
 exposition  given in Section 2 of \cite{BGP},  there is a parabolic subalgebra $\mathfrak p=\mathfrak m_J\oplus \mathfrak n^+_J$ of $\g$ such that $\aa\subset \mathfrak m_J$ and  $[\aa,\aa]=[\mathfrak m_J,\mathfrak m_J]$. It follows that 
 $$
 \mathfrak m_J=\aa\oplus \mathfrak t
 $$
 with $\mathfrak t$ contained in the center of $\mathfrak m$. Let $\mathfrak p^-_J=\mathfrak m_J\oplus \mathfrak n^-_J$ be the opposite parabolic subalgebra. It can be assumed that $J(x)=\pm\sqrt{-1}x$ for all $x\in\mathfrak n^\pm_J$ and that 
$J(\mathfrak t)\subset \mathfrak t$.

Since $J^2=-I_{\q}$, we can decompose $\mathfrak t$ as $\mathfrak t^+_J\oplus \mathfrak t^-_J$, with $\mathfrak t^\pm_J$ the eigenspaces corresponding to the $\pm\sqrt{-1}$ eigenvalues of $J$.
Set
\begin{equation}\label{decu}
\mathfrak u_J^\pm=\mathfrak t^\pm_J\oplus \mathfrak n^\pm_J
\end{equation}
and note that $\mathfrak u_J^\pm$ are Lie subalgebras of $\g$.

Choosing a Cartan subalgebra $\h_\aa$ of $\aa$, we choose $\h=\h_\aa\oplus \mathfrak t$ as Cartan subalgebra of $\g$ and let $\D$ be the corresponding set of roots. Set
\begin{equation}\label{dpl}
\Dp(J)=\{\a\in\D\mid \g_\a\subset \mathfrak n^+\}
\end{equation}
and note that we can always choose a set of positive roots $\Dp$ so that $\Dp(J)\subset \Dp$.
  
 Note furthermore that, since we are assuming  that $(Jq,Jq')=(q,q')$, then  $\mathfrak u^\pm_J$ are isotropic with respect 
to $(\cdot,\cdot)$ and, due to the nondegeneracy of $(\cdot,\cdot)_{|\mathfrak q\times \mathfrak q}$, $\mathfrak u^+_J$ and $\mathfrak u_J^-$ are paired by $(\cdot,\cdot)$. In particular $\dim\,\mathfrak u^+_J=\dim \mathfrak u_J^-$. Note also that $[\aa,\mathfrak u^\pm_J]\subset \mathfrak u^\pm_J$.

\begin{remark}
If $J\in Hom_{\aa}(\mathfrak q,\mathfrak q)$  is such that 
 $J^2=-I_\q$, let $\mathfrak u^\pm_J$ be the eigenspaces corresponding to the eigenvalues $\pm\sqrt{-1}$.  Extend $J$ to $\g$ by setting $J_{|\aa}=0$.  Then  $\mathfrak u^\pm_J$ are subalgebras of $\g$ if and only if
\begin{equation}\label{NT}
J[X,Y]=[JX,Y]+[X,JY]+J[JX,JY],\text{ for all $X,Y\in\g$} .
\end{equation}
\end{remark}

Extend $(\cdot,\cdot)$ to a bilinear symmetric nondegenerate invariant form on $\g$. Let $C_\g$ be the corresponding Casimir operator. Let $\hvee$ be the bilinear form defined by $\hvee(x,y)=\half(C_\g\cdot x,y)$. Let $\k$ be a symmetric invariant form on $\g$ such that $\k+\hvee$ is nondegenerate. Write 
\begin{equation}\label{decsimpleideals}
\g=\oplus_{i=1}^r\g_i
\end{equation}
with $\g_i$ either simple ideals or subspaces of the center of $\g$ such that 
  $\k_{|\g_i\times \g_i}=k_i(\cdot,\cdot)_{|\g_i\times \g_i}$. Note that and $\hvee_{|\g_i\times \g_i}=h^\vee_i(\cdot,\cdot)_{|\g_i\times \g_i}$. This implies that $(\k+\hvee)_{|\g_i\times \g_i}=(k_i+h^\vee_i)(\cdot,\cdot)_{|\g_i\times \g_i}$ with $k_i+h^\vee_i\ne0$.

By construction
$$
\aa=\bigoplus_{i=0}^r(\aa\cap\g_i)
$$
and, since the decomposition \eqref{decsimpleideals} is orthogonal,
$$
\q=\bigoplus_{i=0}^r(\q\cap\g_i).
$$
Let $C_\aa$ be the Casimir element of $\aa$ corresponding to  $(\cdot,\cdot)_{|\aa\times \aa}$ and set $\ov{\mathbf h}^\vee(a,a')=\half(C_\aa\cdot a,a')$ for $a,a'\in\aa$. Let  $\{2\ov{h}_{ij}^\vee\}$ be the set of eigenvalues of the action of $C_\aa$ on $\aa\cap\g_i$ and let $\mathfrak a\cap \g_i=\oplus_j \mathfrak a_{ij}$ be the corresponding eigenspace decomposition.
Since $\aa$ is reductive and its center is contained in a Cartan subalgebra of $\g$, the action of $C_\aa$ on $\g$ (hence on $\q$) is semisimple.

If $V$ is a vector space, let $\ov V$ be the space $V$ considered as a totally odd space. If $V$ is equipped with a nondegenerate symmetric form $B$, we let  $F(\ov V,B)$ (or simply $F(\ov V)$) be the fermionic vertex algebra generated by the fields $\ov v$, $v\in V$, with $\l$-bracket 
$$
[\overline v_\l\overline v']=B(v,v').
$$

Let $\{v_i\}$ be a basis of $V$ and let $\{v^i\}$ be its dual basis with respect to $B$ (i. e. $B(v_i,v^j)=\d_{ij}$). It is well-known (see e.g. \cite[Section 3.4]{KMPX}), that the map 
\begin{equation}\label{conftheta}
\Theta_V:so(V,B))\to F(\ov V),\ T\mapsto \half\sum :\ov{T(v_i)}\ov v^i:
\end{equation}
extends to define a vertex algebra homomorphism
$$
\Theta_V:V^1(so(V,B))\to F(\ov V).
$$
Here $V^1(so(V,B))$ is the universal affine vertex algebra with $\la$-bracket
$$
[T_\la S]=[T,S]+\half\la Tr(TS)\vac,\ T,S\in so(V,B).
$$

Fix a basis $\{q_{ij}\}$ of $\mathfrak q\cap\g_i$ and let $\{q^{ij}\}$ be the dual basis w.r.t. $(\cdot,\cdot)$. Set 
$$
G_i=\sum_{j}:q_{ij}\overline{q}^{ij}:-\frac{1}{6}\sum_{j,s}:\overline{[q_{ij},q_{is}]}_{\mathfrak q}\overline{q}^{ij}\overline{q}^{is}:\in V^\k(\g)\otimes F(\overline{\mathfrak q})
$$
(cf. \cite{KacT}, \cite[Section 4.3]{DK} and  \cite[Lemma 4.5]{KMPD}).
Here, if $a\in\g$, we let $a_{\mathfrak q}$ denote the orthogonal projection of $a$ onto $\mathfrak q$.
The cubic Dirac operator corresponding to the pair $(\g,\mathfrak a)$ is
$$G=\sum_iG_i.
$$
Set also 
$$
\widetilde G_i=\frac{1}{2\sqrt{(k_i+h^\vee_i)}}G_i,\quad \widetilde G=\sum_i \widetilde G_i.
$$
Let $L^\g$ be the Virasoro vector in $V^\k(\g)$ given by Sugawara construction and let
$$
L^{\overline{\mathfrak q}}=\half\sum_{ij}:\partial(\overline{q}_{ij})\overline{q}^{ij}:$$
be the Virasoro vector for $F(\overline{\mathfrak q})$.  Let $\a$ be the bilinear form on $\mathfrak a$, defined by 
$$
\a=(\k+\hvee)_{|\aa\times\aa}-\ov{\mathbf h}^\vee.
 $$

Set, for $a\in\aa$, 
\begin{equation}\label{xa}
a^{\mathfrak a}=a+\Theta_{\q}(ad(a)).
\end{equation}
Recall  that the map 
 $$
 a\mapsto a^{\mathfrak a}
 $$
 extends to define an embedding
 $$
 V^\a(\mathfrak a)\to V^\k(\g)\otimes F(\overline{\mathfrak q}).
 $$

 Let $L^{\mathfrak a}$  be the image in $V^\k(\g)\otimes F(\overline{\mathfrak q})$ of the Virasoro element of $ V^\a(\mathfrak a)$.
Then, if 
\begin{equation}\label{L}L=L^\g-L^{\mathfrak a}+L^{\overline{\mathfrak q}},\end{equation}
we have
\begin{equation}\label{NSG}[L_\l L]=(\partial+2\l)L+\frac{\l^3}{12}C\vac,\quad [L_\l 
\widetilde G]=(\partial+\frac{3}{2}\l)\widetilde G,\quad
[\widetilde G_\l \widetilde G]=\half L+\frac{\l^2}{12}C\vac
\end{equation}
with
central charge
\begin{equation}\label{centralCharge}
C=\half\dim(\mathfrak q)+\sum_{i,j}\frac{\dim\g_i k_i-(k_i+h^\vee_i-\ov{h}_{ij}^\vee)\dim\aa_{ij} }{k_i+h^\vee_i}.
\end{equation}

If $J\in SO(\q,(\cdot,\cdot))$,  then $J$ extends to an automorphism $J$ of $V^k(\g)\otimes F(\overline{\mathfrak q})$ such that $J(a)=a$ for $a\in V^k(\g)$ and $J(\ov q)=\ov{J(q)}$ for $q\in\mathfrak q$.
Following \cite{STVP}, we set  
$$\widetilde G_J=J(\widetilde G).$$ If $J\in Hom_\aa(\q,\q)$, then $J(a^\aa)=a^\aa$ for all $a\in\aa$. In particular, $J(L)=L$. It follows that
\begin{equation}\label{NSGJ}
 [L_\l \widetilde G_J]=(\partial+\frac{3}{2}\l)\widetilde G_J,\quad[\widetilde G_J{}_\l \widetilde G_J]=\half L+\frac{\l^2}{12}C\vac.
\end{equation}

\section{The $N=2$ vertex algebra}The universal  $N=2$   vertex operator algebra is  freely generated by the fields $L$, $\widetilde G^\pm$, and $\widetilde J$, where $L$ a Virasoro  field with central
charge $C$, the fields $\widetilde J$ (resp. $\widetilde G^{\pm}$) are primary of
conformal weight $1$ (resp.~$3/2$), and the remaining
$\lambda$-brackets are as follows:
\begin{equation}\label{reN=2}
  [\widetilde J_{\lambda} \widetilde J ]= \tfrac{\lambda}{3}C\vac \, , \,
  [\widetilde G^{\pm}{}_{\lambda} \widetilde G^{\pm} ]=0 \, , \,
  [\widetilde J_{\lambda} \widetilde G^{\pm}] =\pm \widetilde G^{\pm} \, , \,
  [{\widetilde G^+}{}_{\lambda} \widetilde G^- ] = L + (\tfrac{1}{2}\partial+\lambda)
  \widetilde J+\tfrac{\lambda^2}{6}C \vac .
\end{equation}
\begin{proposition}\label{uniqueN=2}
Assume that in a VOA $V$ with conformal vector $L$ of central charge $C$ there exist primary odd fields $\widetilde G^\pm$ of conformal weight $\frac{3}{2}$.  Set   $\widetilde J=\widetilde G^+_{(1)}\widetilde G^-$ and assume furthermore that
\begin{align}\label{n22}[\widetilde G^+{}_\l\widetilde G^+]&=0,\\
\label{n23}[\widetilde G^-{}_\l\widetilde G^-]&=0,\\
\label{n21}
[\widetilde G^+{}_\l\widetilde G^-]&=L+\left(\half\partial +\l \right)\widetilde J +\l^2c \vac
\end{align}
with $c\in\C$.
Then $C=6c$ and relations \eqref{reN=2} hold. Therefore $L$, $\widetilde G^\pm$, and $\widetilde J$ generate a quotient of the universal  $N=2$ vertex algebra.\end{proposition}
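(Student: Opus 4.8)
The plan is to show that every $\lambda$-bracket among $L$, $\widetilde G^\pm$ and $\widetilde J$ is already forced by the hypotheses together with the three standing axioms of a Lie conformal algebra: sesquilinearity, skew-symmetry and the Jacobi identity. First I would record the immediate consequences of \eqref{n21}: comparing coefficients of powers of $\lambda$ shows that $\widetilde G^+_{(0)}\widetilde G^-=L+\half\partial\widetilde J$, that $\widetilde J=\widetilde G^+_{(1)}\widetilde G^-$ is even of conformal weight $1$, that $\widetilde G^+_{(2)}\widetilde G^-=2c\vac$, and that $\widetilde G^+_{(j)}\widetilde G^-=0$ for $j\ge 3$. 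The relation $C=6c$ and the precise form of $[\widetilde J_\lambda\widetilde J]$ will then emerge at the end as forced consistency conditions rather than as inputs.

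Next I would compute $[\widetilde G^+_\lambda\widetilde J]$. Applying the Jacobi identity to the triple $(\widetilde G^+,\widetilde G^+,\widetilde G^-)$ and using \eqref{n22} to kill the double-bracket term yields $[\widetilde G^+_\lambda[\widetilde G^+_\mu\widetilde G^-]]=-[\widetilde G^+_\mu[\widetilde G^+_\lambda\widetilde G^-]]$. Substituting \eqref{n21}, rewriting $[\widetilde G^+_\lambda L]$ via skew-symmetry from the primarity relation $[L_\lambda\widetilde G^+]=(\partial+\tfrac{3}{2}\lambda)\widetilde G^+$, and matching coefficients of $\lambda$ and $\mu$ then pins down the bracket; here one uses that a $\lambda$-bracket is a polynomial, so the matching of top-degree terms forces $[\widetilde G^+_\lambda\widetilde J]$ to be independent of $\lambda$, and the remaining linear equation gives $[\widetilde G^+_\lambda\widetilde J]=-\widetilde G^+$. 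Skew-symmetry then produces $[\widetilde J_\lambda\widetilde G^+]=\widetilde G^+$. The analogous computation for the triple $(\widetilde G^-,\widetilde G^+,\widetilde G^-)$, now using \eqref{n23} and the skew-symmetric form of \eqref{n21}, gives $[\widetilde G^-_\lambda\widetilde J]=\widetilde G^-$ and hence $[\widetilde J_\lambda\widetilde G^-]=-\widetilde G^-$.

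With the weights $\pm1$ of $\widetilde G^\pm$ under $\widetilde J$ in hand, I would compute $[\widetilde J_\lambda\widetilde J]$ by writing $\widetilde J$ as the coefficient of $\mu$ in $[\widetilde G^+_\mu\widetilde G^-]$, so that $[\widetilde J_\lambda\widetilde J]$ is the $\mu$-linear part of $[\widetilde J_\lambda[\widetilde G^+_\mu\widetilde G^-]]$. Expanding the latter by Jacobi and inserting $[\widetilde J_\lambda\widetilde G^+]=\widetilde G^+$ and $[\widetilde J_\lambda\widetilde G^-]=-\widetilde G^-$ collapses it to $[\widetilde G^+_{\lambda+\mu}\widetilde G^-]-[\widetilde G^+_\mu\widetilde G^-]$; reading off the coefficient of $\mu$ and using \eqref{n21} leaves $[\widetilde J_\lambda\widetilde J]=2c\lambda\vac$. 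Comparing with the target $\tfrac{\lambda}{3}C\vac$ of \eqref{reN=2} forces $C=6c$, after which $\lambda^2 c\vac=\tfrac{\lambda^2}{6}C\vac$ and \eqref{n21} becomes exactly the last relation of \eqref{reN=2}. A parallel Jacobi computation for $[L_\lambda\widetilde J]$ gives $[L_\lambda\widetilde J]=(\partial+\lambda)\widetilde J$, confirming that $\widetilde J$ is primary of conformal weight $1$; together with the hypotheses on $L$ and $\widetilde G^\pm$ this identifies the subalgebra generated by $L,\widetilde G^\pm,\widetilde J$ as a quotient of the universal $N=2$ vertex algebra.

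I expect the main obstacle to be organizational rather than conceptual: the repeated use of skew-symmetry requires the substitution $\mu\mapsto-\lambda-\partial$ with $\partial$ acting on the bracket outputs, and several brackets (for instance $[\widetilde G^-_\lambda\widetilde J]$ and $[\widetilde J_\lambda\widetilde G^-]$) are mutually related through skew-symmetry, so care is needed to avoid circularity when solving the resulting functional equations in $\lambda$ and $\mu$. The genuinely structural point---that the central constant $c$ in \eqref{n21} is not free but is tied to $C$---is extracted only indirectly, from the self-bracket of the derived current $\widetilde J$, which is why computing $[\widetilde J_\lambda\widetilde J]$ is the decisive step.
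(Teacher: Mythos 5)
Your overall strategy is sound and close to the paper's: the brackets $[\widetilde J_\lambda \widetilde G^\pm]=\pm\widetilde G^\pm$ are obtained from the Jacobi identity applied to the same triples $(\widetilde G^+,\widetilde G^+,\widetilde G^-)$ and $(\widetilde G^-,\widetilde G^+,\widetilde G^-)$, and your computation of $[\widetilde J_\lambda\widetilde J]$ as the $\mu$-linear part of $[\widetilde J_\lambda[\widetilde G^+{}_\mu\widetilde G^-]]$ is correct and in fact cleaner than the paper's, which instead evaluates $[[\widetilde G^+{}_\lambda\widetilde G^-]_\mu[\widetilde G^+{}_\nu\widetilde G^-]]$ in two ways and extracts the coefficient of $\lambda\nu$. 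Both routes yield $[\widetilde J_\lambda\widetilde J]=2c\lambda\vac=\tfrac{\lambda}{3}(6c)\vac$.

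There is, however, a genuine gap in how you obtain $C=6c$. You write that ``comparing with the target $\tfrac{\lambda}{3}C\vac$ of \eqref{reN=2} forces $C=6c$,'' but $\tfrac{\lambda}{3}C\vac$ is part of the conclusion, not a hypothesis; from $[\widetilde J_\lambda\widetilde J]=2c\lambda\vac$ alone there is no constraint whatsoever relating $c$ to the central charge $C$ of $L$, so this step is circular. The relation $C=6c$ must be extracted from the one place where $C$ actually enters the hypotheses, namely $[L_\lambda L]=\partial L+2\lambda L+\tfrac{\lambda^3}{12}C\vac$. This is exactly what the paper does: in the Jacobi identity for $[L_\lambda[\widetilde G^+{}_\mu\widetilde G^-]]$ --- the same computation you invoke only to obtain $[L_\lambda\widetilde J]=(\partial+\lambda)\widetilde J$ --- the $\mu$-independent part, after substituting the primarity of $\widetilde J$, gives $[L_\lambda L]=\partial L+2\lambda L+\tfrac{c}{2}\lambda^3\vac$, and comparing $\lambda^3$ coefficients yields $C=6c$. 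Once you add this step (and only then conclude that $\lambda^2c\vac=\tfrac{\lambda^2}{6}C\vac$ and that $2c\lambda\vac=\tfrac{\lambda}{3}C\vac$), your argument is complete.
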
 
\begin{proof}
From
$$
[L_\l[\widetilde G^+{}_\mu \widetilde G^-]]=[\widetilde G^+{}_\mu[L_\l \widetilde G^-]]+[[L_\l \widetilde G^+]_{\l+\mu}\widetilde G^-]
$$
we obtain
$$
\begin{aligned}
&[L_\l L]+\half[L_\l\partial \widetilde J]+\mu [L_\l\widetilde J]=[\widetilde G^+{}_\mu(\partial \widetilde G^-+\tfrac{3}{2}\l \widetilde G^-)]+[(\partial \widetilde G^++\tfrac{3}{2}\l \widetilde G^+)_{\l+\mu}\widetilde G^-]\\
&=[\widetilde G^+{}_\mu\partial \widetilde G^-]+\tfrac{3}{2}\l [\widetilde G^+{}_\mu \widetilde G^-]+[\partial \widetilde G^+_{\l+\mu}\widetilde G^-]+\tfrac{3}{2}\l [\widetilde G^+{}_{\l+\mu}\widetilde G^-]\\
&=\partial[\widetilde G^+{}_\mu \widetilde G^-]+\mu[\widetilde G^+{}_\mu \widetilde G^-]+\tfrac{3}{2}\l [\widetilde G^+{}_\mu \widetilde G^-]-(\l+\mu)[\widetilde G^+_{\l+\mu}\widetilde G^-]+\tfrac{3}{2}\l [\widetilde G^+{}_{\l+\mu}\widetilde G^-]\\
&= 
  \partial L+ \mu \partial\widetilde J+\half
   \partial^2\widetilde J+\frac{c}{2} \lambda ^3\vac+\half \lambda ^2 \widetilde J
  +  \lambda  \mu\widetilde J +2
   \lambda  L+ \lambda 
   \partial \widetilde J
\end{aligned}
$$
from which it follows that 
$[L_\l \widetilde J]=(\partial+\l)\widetilde J$,
so $\widetilde J$ is primary of conformal weight 1.
It follows that
$$
[L_\l\partial \widetilde J]=\partial^2\widetilde J+2\l\partial \widetilde J+\l^2\widetilde J,
$$
hence $[L_\l L]=
  \partial L+\frac{c}{2} \lambda ^3\vac+2
   \lambda  L$. This implies that $C=6c$ and, in particular 
   $$
   [{\widetilde G^+}{}_{\lambda} \widetilde G^- ] = L + (\tfrac{1}{2}\partial+\lambda)
  \widetilde J+\tfrac{\lambda^2}{6}C \vac.
  $$
From
$$
[\widetilde G^+{}_\l[\widetilde G^+{}_\mu \widetilde G^-]]=-[\widetilde G^+{}_\mu[\widetilde G^{+}{}_\l \widetilde G^-]]
$$
we obtain
$$
\begin{aligned}
&\half\partial \widetilde G^+{} +\tfrac{3}{2}\l \widetilde G^+{}+\half\partial [\widetilde G^+{}_\l\widetilde J]+(\half\l+\mu) [\widetilde G^+{}_\l\widetilde J] \\&=-\half\partial \widetilde G^+- \tfrac{3}{2}\mu \widetilde G^+{}-\half\partial[\widetilde G^+{}_\mu\widetilde J]-(\half\mu+\l)[\widetilde G^+{}_\mu\widetilde J],\end{aligned}
$$
from which it follows that $\widetilde G^+{}_{(j)} \widetilde J=0$ for $j\ge1$ and
$
\mu \widetilde G^+{}_{(0)}\widetilde J=-\tfrac{3}{2}\mu \widetilde G^+-\half\mu \widetilde G^+{}_{(0)}\widetilde J
$. This relation  means that $\widetilde J_{(0)} \widetilde G^+=\widetilde G^+$, or
 \begin{equation}\label{n25}
[\widetilde J_\l \widetilde G^+]=\widetilde G^+.
\end{equation}
From
$$
[\widetilde G^-{}_\l[\widetilde G^+{}_\mu \widetilde G^-]]=[[\widetilde G^-{}_\l \widetilde G^+]_{\l+\mu}\widetilde G^-]
$$
we obtain
$$
\begin{aligned}
&\half\partial \widetilde G^-{} +\tfrac{3}{2}\l \widetilde G^-{}+\half\partial [\widetilde G^-{}_\l\widetilde J]+(\half\l+\mu) [\widetilde G^-{}_\l\widetilde J] =\partial \widetilde G^-+ \tfrac{3}{2}(\l+\mu) \widetilde G^--\half(\l-\mu)[\widetilde J_{\l+\mu}\widetilde G^-],\end{aligned}
$$
from which it follows that $\widetilde J_{(j)}\widetilde G^- =0$ for $j\ge1$ and
$
\half\mu\widetilde J_{(0)} \widetilde G^-+\tfrac{3}{2}\mu \widetilde G^-=-\mu \widetilde J_{(0)}\widetilde G^-
$. This relation means that $\widetilde J_{(0)}\widetilde G^-=-\widetilde G^-$, or 
 \begin{equation}\label{n26}
[\widetilde J_\l \widetilde G^-]=-\widetilde G^-.
\end{equation}
Now compute

$$
\begin{aligned}
[[\widetilde G^+{}_\l \widetilde G^-]_\mu[\widetilde G^+{}_\nu \widetilde G^-]]&=[(L+\left(\half\partial +\l \right)\widetilde J +\frac{\l^2}{6}C)_\mu(L+\left(\half\partial +\nu \right)\widetilde J +\frac{\nu^2}{6}C\vac)]\\
&=\l\nu[\widetilde J_\mu\widetilde J]+f_1(\l,\mu,\nu),
\end{aligned}
$$
where $f_1(\l,\mu,\nu)$ is a polynomial expression in $\l,\nu$ such that the coefficient of $\l\nu$ is $0$.
On the other hand
$$
\begin{aligned}
[[\widetilde G^+{}_\l \widetilde  G^-]_\mu[\widetilde G^+{}_\nu \widetilde G^-]]&=[\widetilde G^+{}_\nu[[\widetilde G^+{}_\l \widetilde G^-]_\mu \widetilde G^-]]+[[[\widetilde G^+{}_\l \widetilde G^-]_\mu \widetilde G^+{}]_{\mu+\nu}\widetilde G^-]\\
&=[\widetilde G^+{}_\nu[( L+\left(\half\partial +\l \right)\widetilde J )_\mu \widetilde G^-]]+[[(L+\left(\half\partial +\l \right)\widetilde J )_\mu \widetilde G^+{}]_{\mu+\nu}\widetilde G^-]\\
&=\l([\widetilde G^+{}_\nu[\widetilde J_\mu \widetilde G^-]]+[[\widetilde J_\mu \widetilde G^+]_{\mu+\nu} \widetilde G^-])+f_2(\l,\mu,\nu)\\
&=\l(-[\widetilde G^+{}_\nu \widetilde G^-]+[\widetilde G^+_{\mu+\nu} \widetilde G^-])+f_2(\l,\mu,\nu)\\
&=\l(\mu\widetilde J +\frac{\mu^2+2\mu\nu}{6}C \vac)+f_2(\l,\mu,\nu),
\end{aligned}
$$
where $f_2(\l,\mu,\nu)$  is an expression polynomial in $\l,\nu$ such that the coefficient of $\l\nu$ is $0$.
This implies that
\begin{equation}\label{n27}
[\widetilde J_\mu \widetilde J]=\frac{\mu}{3}C \vac.
\end{equation}
\end{proof}

\section{Coset realisation of $N=2$ vertex algebra}\label{cosetN=2}
In this Section we review the construction of $N=2$ coset models  attached to an integrable complex structure on a homogeneous space  as given in \cite{Kazamachar} and \cite{Kazamanew}.

Following the setting of Section \ref{setup}, let $J\in SO(\q,(\cdot,\cdot))\cap Hom_\aa(\q,\q)$ be the integrable almost complex structure attached to the complex structure on $\mathbb M=\mathbb G/\mathbb A$.

Set
\begin{equation}\label{Gtildes}
G^{\pm}= G\pm\sqrt{-1} G_J,\quad
\widetilde G^\pm=\widetilde G\pm\sqrt{-1} \widetilde G_J.
\end{equation}

Fix a basis $\{u_i\}$ of $\mathfrak u^+_J$ and let $\{u^i\}$ be the basis of $\mathfrak u_J^-$ dual to $\{u_i\}$. With these notations the explicit expression for $G^\pm$ is
\begin{equation}\label{gpgm}
G^{+}=2\sum_i:u_i\overline{u}^i:-\sum_{i,j}:\overline{[u_i,u_j]}\overline{u}^i\overline{u}^j:,\ G^-=2\sum_i:u^i\overline{u}_i:-\sum_{i,j}:\overline{[u^i,u^j]}\overline{u}_i\overline{u}_j:.
\end{equation}

Set 
\begin{equation}\label{Jtilde}
\widetilde J=-\sum_j:\overline u_{j}\overline u^{j}:+\sum_{i,j} \frac{1}{k_i+h_i^\vee}\sum_j[u_{ij},u^{ij}]+\sum_{i,j,t,q} \frac{1}{k_i+h_i^\vee}([u_{q},[u^{ij},u_{ij}]],u^{t}):\ov u_{t}\ov u^{q}:.
\end{equation}

Choose the root vectors $x_\a$, $\a\in\D$,  so that $(x_\a,x_{\be})=\d_{\a,-\be}$. Let $\Delta_i$ be the set of roots of $\g_i$. As a  basis of $\mathfrak n^+_J\cap \g_i$, we can choose 
$$
\{x_\a\mid \a\in\Dp(J)\cap \D_i\}.
$$
Its dual basis  is 
$$
\{x_{-\a}\mid \a\in\Dp(J)\cap\D_i\}.
$$

It follows that
\begin{equation}\label{rotilde}
\sum_j[u_{ij},u^{ij}]=\sum_{\a\in\Dp(J)\cap\D_i}[x_{\a},x_{-\a}]=\sum_{\a\in\Dp(J)\cap\D_i}h_\a=2h_{(\rho_i-\rho^\aa_i)}
\end{equation}
where $\rho_i$, $\rho^\aa_i$ are the $\rho$-vectors of $\g_i$ and $\aa_i$ respectively. Set 
$$\widetilde \rho=\sum_i\frac{1}{k_i+h_i^\vee}h_{\rho_i-\rho^\aa_i},
$$
so that we can write
$$
\tilde J=-\sum_i:\overline u_i\overline u^i:+2\widetilde \rho+ 2\sum_{t,q}([\widetilde \rho,u^t],u_q):\ov u^q\ov u_t:.
$$
Since $\widetilde \rho\in\h$, it is clear that $\mathfrak u_J^\pm$ are stable under $ad(\widetilde\rho)$, so we can write
$$
\begin{aligned}
\sum_{t,q}([\widetilde \rho,u_q],u^t):\ov u_t\ov u^q:&=\half \sum_{t,q}([\widetilde \rho,u_q],u^t):\ov u_t\ov u^q:+\half\sum_{t,q}([\widetilde \rho,u^q],u_t):\ov u^t\ov u_q:\\
&=\half \sum_{q}:\ov{[\widetilde \rho,u_q]}\ov u^q:+\half\sum_{q}:\ov{[\widetilde \rho,u^q]}\ov u_q:=\Theta_\q(ad(\widetilde\rho)).
\end{aligned}
$$
Observe also that 
$$
\sum_i:\overline u_i\overline u^i:=-\tfrac{\sqrt{-1}}{2}\sum_i:\overline{J u_i}\overline u^i:-\tfrac{\sqrt{-1}}{2}\sum_i:\overline{Ju^i}\overline u_i:=-\sqrt{-1}\Theta_\q(J),
$$
so
\begin{equation}\label{betterJtilde}
\tilde J=\sqrt{-1}\Theta_\q(J)+2\widetilde \rho+ 2\Theta_\q(ad(\widetilde \rho)).
\end{equation}

We are now ready to state the main result of this section.

\begin{proposition}\label{ParabisN=2} Let 
\begin{equation}\label{smallc}
c_i=\frac{2 k_i\dim \mathfrak u^+_J\cap\g_i +\sum_{j,r}([u_{ij},u_{ir}],[u^{ir},u^{ij}])}{4(k_i+h_i^\vee)},\ c=\sum_i c_i
\end{equation}
and let $C$ be as in \eqref{centralCharge}. 
Then 
$C=6c$ and  the fields $L$, $\widetilde G^\pm$ (given in \eqref{Gtildes}) and  $\widetilde J$ (given in \eqref{betterJtilde}) generate a quotient of  a $N=2$ superconformal algebra with central charge $C$.
\end{proposition}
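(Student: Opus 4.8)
The plan is to check that the pair $\widetilde G^\pm$ meets the hypotheses of Proposition \ref{uniqueN=2}; the assertions $C=6c$ and \eqref{reN=2} then follow automatically, and since $C$ is the central charge of $L$ coming from \eqref{NSG}, the equality $C=6c$ will reconcile \eqref{centralCharge} with \eqref{smallc} at no extra cost. The primality hypothesis is immediate: by the middle relation of \eqref{NSG} and the first relation of \eqref{NSGJ} both $\widetilde G$ and $\widetilde G_J$ are primary of conformal weight $\tfrac32$ for $L$, hence so is every linear combination, in particular $\widetilde G^\pm=\widetilde G\pm\sqrt{-1}\,\widetilde G_J$.

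Next I would establish $[\widetilde G^+{}_\l\widetilde G^+]=0$ and $[\widetilde G^-{}_\l\widetilde G^-]=0$. Expanding bilinearly and using $[\widetilde G_\l\widetilde G]=[\widetilde G_J{}_\l\widetilde G_J]=\half L+\tfrac{\l^2}{12}C\vac$ from \eqref{NSG}, \eqref{NSGJ}, the diagonal contributions cancel and only the symmetric cross term $\sqrt{-1}\bigl([\widetilde G_\l\widetilde G_J]+[\widetilde G_J{}_\l\widetilde G]\bigr)$ survives. Rather than compute this cross term, I would argue directly from the closed form \eqref{gpgm}: the bracket splits over the ideals $\g_i$ and cross-ideal brackets vanish (the $\g_i$ are orthogonal ideals and the $\mathfrak q\cap\g_i$ are mutually orthogonal), so it suffices to treat one ${G_i^+}$. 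Because $\mathfrak u^+_J$ is isotropic, all double bosonic contractions — the source of the central and $L^\g$-type terms — vanish; the subalgebra property $[\mathfrak u^+_J,\mathfrak u^+_J]\subset\mathfrak u^+_J$ keeps the remaining normally ordered quadratic–cubic and cubic–cubic contributions inside $\mathfrak u^+_J\oplus F(\overline{\mathfrak u^+_J})$, where a Jacobi-identity cancellation kills them. This yields $[{G_i^+}{}_\l {G_i^+}]=0$, hence $[\widetilde G^+{}_\l\widetilde G^+]=0$; the argument for $\widetilde G^-$ is identical with $\mathfrak u^-_J$ in place of $\mathfrak u^+_J$.

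The substantive step is the mixed bracket. Again it is diagonal in the ideals, so I would compute $[{G_i^+}{}_\l G_i^-]$ by the non-commutative Wick formula and then reassemble $[\widetilde G^+{}_\l\widetilde G^-]=\sum_i\frac1{4(k_i+h_i^\vee)}[{G_i^+}{}_\l G_i^-]$. Now $\mathfrak u^+_J$ and $\mathfrak u^-_J$ are dually paired, so the surviving bosonic contractions $(u_{ij},u^{ir})=\delta_{jr}$ produce, together with $L^\g$, $L^\aa$ and $L^{\overline{\mathfrak q}}$, exactly the conformal vector $L$ of \eqref{L}; the coefficient of $\l$ is by definition $\widetilde G^+_{(1)}\widetilde G^-$, which I must identify with the explicit current \eqref{betterJtilde}. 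The key inputs here are \eqref{rotilde}, $\sum_j[u_{ij},u^{ij}]=2h_{\rho_i-\rho^\aa_i}$, which supplies the $2\widetilde\rho$ and $2\Theta_\q(\ad\widetilde\rho)$ summands, and the rewriting $\sum_i:\overline u_i\overline u^i:=-\sqrt{-1}\,\Theta_\q(J)$; the coefficient of $\l^2$, obtained by a direct trace and structure-constant count, must be the constant $c$ of \eqref{smallc}, which is then the constant entering \eqref{n21}. With the three relations \eqref{n22}, \eqref{n23}, \eqref{n21} thus verified, Proposition \ref{uniqueN=2} applies verbatim: $C=6c$ and \eqref{reN=2} hold, so $L,\widetilde G^\pm,\widetilde J$ generate a quotient of the universal $N=2$ vertex algebra of central charge $C$.

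I expect the decisive obstacle to be this last OPE $[{G_i^+}{}_\l G_i^-]$: it is a full Wick computation involving bosonic, mixed, and fermionic contractions, whose finite part must be matched term by term with \eqref{betterJtilde} — in particular tracking how the affine Virasoro piece $L^\aa$ and the $\widetilde\rho$-dependent pieces of $\widetilde J$ emerge — while simultaneously the $\l^2$-coefficient has to reproduce the exact combination \eqref{smallc}, including the quartic structure-constant term $\sum_{j,r}([u_{ij},u_{ir}],[u^{ir},u^{ij}])$.
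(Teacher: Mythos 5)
Your overall strategy --- verify \eqref{n22}, \eqref{n23}, \eqref{n21} for the explicit fields and then invoke Proposition \ref{uniqueN=2} --- is exactly the paper's, and your treatment of $[G^+{}_\l G^+]$ and of the mixed bracket reproduces, in outline, the computation the paper carries out directly. However, there is a genuine gap: your reduction to a single ideal (``the bracket splits over the ideals $\g_i$ \dots so it suffices to treat one $G_i^+$'', and the formula $[\widetilde G^+{}_\l\widetilde G^-]=\sum_i\tfrac{1}{4(k_i+h_i^\vee)}[G_i^+{}_\l G_i^-]$) presupposes that $\mathfrak u_J^\pm=\oplus_i(\mathfrak u_J^\pm\cap\g_i)$, i.e.\ that a basis of $\mathfrak u_J^+$ adapted to the decomposition \eqref{decsimpleideals} exists. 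This holds when $\aa=\mathfrak m$, so that $\mathfrak u_J^\pm=\mathfrak n_J^\pm$, but fails in general: by \eqref{decu} one has $\mathfrak u_J^\pm=\mathfrak t_J^\pm\oplus\mathfrak n_J^\pm$, and $J$ need not preserve the decomposition of $\mathfrak t$ into its intersections with the $\g_i$. This is not a marginal case: in Example \ref{Ex1}, $\g=gl(2)=\C I\oplus sl(2)$, $\aa=0$, $\mathfrak t=\h$, and $J$ sends $h_\theta\in sl(2)$ to a multiple of $d_1=I\in\g_0$, so $\mathfrak t_J^+$ is spanned by a vector with nonzero components in both ideals --- and this is precisely the situation the Proposition must cover for the $N=4$ coset construction.

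The paper closes this case by a separate argument for which your proposal has no counterpart: Remark \ref{GGJ} shows that the three relations \eqref{n22}, \eqref{n23}, \eqref{n21} are together equivalent to the single identity \eqref{GGJnew}, $[\widetilde G_\l\widetilde G_J]=\tfrac{\sqrt{-1}}{4}(\partial+2\l)\widetilde J$; one then writes $\widetilde G=(\widetilde G_\g-\widetilde G_{\mathfrak m})+(\widetilde G_{\mathfrak m}-\widetilde G_\aa)$, applies the already-established special case to the first summand, kills the cross terms by \eqref{GU}, and computes $[(\widetilde G_{\mathfrak m}-\widetilde G_\aa)_\l J(\widetilde G_{\mathfrak m}-\widetilde G_\aa)]$ in a dedicated Lemma, which supplies the missing contribution $\widetilde J_{\mathfrak t}=-\sum_i:\ov{t_i}\ov{t^i}:$ to $\widetilde J$. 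A secondary, more minor point: in your argument that $[G^+{}_\l G^+]=0$, the Jacobi identity only disposes of the quartic fermionic term; the terms proportional to $\l:\ov u^i\ov u^r:$ and $:\partial\ov u^i\ov u^r:$ in \eqref{master} are eliminated by showing $\sum_j([u^j,u_r],[u_i,u_j])=0$, which uses the stability of $\mathfrak u_J^+$ under $C_\g-C_\aa$ and under $\ad(\sum_j[u_j,u^j])$ rather than the Jacobi identity, and this step should be made explicit.
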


In light of Proposition \ref{uniqueN=2}, the proof of Proposition \ref{ParabisN=2} reduces to checking that  relations \eqref{n22}, \eqref{n23}, \eqref{n21} hold with $\widetilde G^\pm$ as in \eqref{Gtildes},  $\widetilde J$ given by \eqref{betterJtilde}, and
 $c$ as in \eqref{smallc}. This computation will take up the rest of this section.

We will use the following formulas  for $\la$-brackets in $V^{\mathbf k}(\g)\otimes F(\ov\q)$ that we checked  using the Mathematica\texttrademark  \ package \verb|lambda|. 
 If 
 $a_i\in 
\g$ and $x_i,y_i\in\mathfrak q$, then
\begin{equation}\label{f1}\begin{aligned}
&[:a_1\overline  
x_1:_\l :a_2\overline  
y_1:]=:[a_1,a_2]\overline  
x_1\overline y_1:{ +(x_1,y_1)\partial[a_1,a_2]}+(x_1,y_1):a_2a_1:\\
&+\mathbf k(a_1,a_2):\partial \overline x_1\,\overline y_1: +\mathbf k(a_1,a_2)\l :\overline x_1\overline y_1:+\l(x_1,y_1)[a_1,a_2]+\half \l^2\mathbf k(a_1,a_2)(x_1,y_1)\vac \\
&=:[a_1,a_2]\overline  
x_1\overline y_1:+{ (x_1,y_1):a_1a_2:}+\mathbf k(a_1,a_2):\partial \overline x_1\,\overline y_1: \\
&+\mathbf k(a_1,a_2)\l :\overline x_1\overline y_1:+\l(x_1,y_1)[a_1,a_2]+\half \l^2 \mathbf k(a_1,a_2)(x_1,y_1)\vac,
\end{aligned}
\end{equation}
\begin{align}\label{f2}[:a_1\overline
x_1:_\la:\overline  y_1\overline y_2\overline y_3:]&=(x_1,y_1):a_1\overline y_2\overline
y_3:-(x_1,y_2):a_1\overline  y_1\overline y_3:+(x_1,y_3):a_1\overline 
y_1\overline y_2:,
\end{align} 
\begin{equation}\label{fff3}
\begin{aligned}
[:\ov x_1&\overline{x}_2:_\la : y_1\overline{y}_2:]= (x_2,y_2):y_1\ov x_1:-(x_1,y_2):y_1\ov x_2:,
\end{aligned}
\end{equation}
\begin{equation}\label{ffff3}
\begin{aligned}
[:x_1&\overline{x}_2:_\la : \overline{y}:]= (x_2,y)x_1,
\end{aligned}
\end{equation}
\begin{equation}\label{fffff3}
\begin{aligned}
[:\ov x_1 \ov x_2 \ov  x_3:_\la \ov y]=(y,x_3):\ov x_1\ov x_2:-(y,x_2) : \ov x_1\ov x_3:+(y,x_1):\ov x_2\ov x_3:,
\end{aligned}
\end{equation}
\begin{equation}\label{1lambda2}
\begin{aligned}
[:\ov x_1 \ov x_2 :_\la \ov y]=-(y,x_1)x_2+(y,x_2)x_1.
\end{aligned}
\end{equation}
Let $M$  be the matrix $((x_i,y_j))$. We use the standard notation for minors, so $M_{i_1,i_2,\ldots}^{j_1,j_2,\ldots}$ is the minor obtained by taking off rows ${i_1,i_2,\ldots}$ and columns ${j_1,j_2,\ldots}$. With this notation we have
\begin{equation}\label{f3}
\begin{aligned}
[:\overline{x}_1&\overline{x}_2\overline{x}_3:_\la :\overline{y}_1\overline{y}_2\overline{y}_3:]=-\half\l^2det(M)\vac -\l\sum_{i,j}(-1)^{i+j}det(M_i^j):\ov x_i\ov y_j:\\
&+\sum_{i<j,r<s}(-1)^{i+j+r+s}det(M^{r,s}_{i,j}):\ov x_i\ov x_j\ov y_r\ov y_s:-\sum_{i,j}(-1)^{i+j}det(M_i^j):\partial \ov x_i\ov y_j:,
\end{aligned}
\end{equation}

\begin{equation}\label{ff3}
\begin{aligned}
[:\overline{x}_1&\overline{x}_2:_\la :\overline{y}_1\overline{y}_2\overline{y}_3:]=\sum_{i,j<k}(-1)^{i+j+k}det(M_i^{jk}):\ov x_i\ov y_j\ov y_k:+\la\sum_{j}(-1)^{j}det(M^j)\ov y_j.
\end{aligned}
\end{equation}

If $a\in\g$, we let $a_{\mathfrak u^+_J}$, $a_{\mathfrak u_J^-}$, $a_{\mathfrak a}$ denote the projections of $a$ onto $\mathfrak u^+_J$, $\mathfrak u_J^-$, and $\mathfrak a$ respectively corresponding to the decomposition
$$
\g=\mathfrak u_J^-\oplus \mathfrak a\oplus \mathfrak u^+_J.
$$
Assume first that $\aa=\mathfrak m$ so that $u^\pm_J=\mathfrak n^\pm_J$. In this case 
$$
\mathfrak u^\pm_J=\oplus_i (\mathfrak u^\pm_J\cap\g_i).
$$
Choose a basis $\{u_{ij}\}$ of $\mathfrak u_J^+\cap \g_i$ and let $\{u^{ij}\}$ be the basis of $\mathfrak u^-_J\cap\g_i$ dual to   $\{u_{ij}\}$.
Then 
$$
\widetilde G^\pm=\sum_i \widetilde G^\pm_i
$$
with 
\begin{equation}\label{gpgmi}
\begin{aligned}
\widetilde G^{+}_i=\frac{1}{2\sqrt{k_i+h^\vee_i}}\left(2\sum_{j}:u_{ij}\overline{u}^{ij}:-\sum_{j,t}:\overline{[u_{ij},u_{it}]}\overline{u}^{ij}\overline{u}^{it}:\right),\\ \widetilde G^-_i=\frac{1}{2\sqrt{(k_i+h^\vee_i)}}\left(2\sum_j:u^{ij}\overline{u}_{ij}:-\sum_{j,t}:\overline{[u^{ij},u^{it}]}\overline{u}_{ij}\overline{u}_{it}:\right),
\end{aligned}
\end{equation}
so that 
$[\widetilde G^\pm_\l \widetilde G^\pm]=\sum_i[(\widetilde G^\pm_i)_\l \widetilde G^\pm_i]$. We can therefore assume
 that $\g$ is either simple or abelian so that, dropping the index $i$, we can write
\begin{equation}\label{gsimpleformulas}
\begin{aligned}
&\widetilde G^\pm=\frac{1}{2\sqrt{k+h^\vee}}G^\pm,\\
&\widetilde J=\sum_j:\overline u^{j}\overline u_{j}:+ \frac{1}{k+h^\vee}\sum_j[u_{j},u^{j}]+ \frac{1}{k+h^\vee}\sum_{j,t,q}([u_{q},[u^{j},u_{j}]],u^{t}):\ov u_{t}\ov u^{q}:,\\
&c=\frac{2 k\dim \mathfrak u^+_J +\sum_{j,r}([u_{j},u_{r}],[u^{r},u^{j}])}{4(k+h^\vee)},\\
&L=\tfrac{1}{2(k+h^\vee)}(\sum_i:a_ia_i:+\sum_i:u^iu_i:+\sum_i:u_iu^i:-\sum_{j}:(a_j)^{\mathfrak a}(a_j)^{\mathfrak a}:)\\&+\half\sum_i:\partial \ov u_i\ov u^i:+\half\sum_i:\partial \ov u^i\ov u_i:,
\end{aligned}
\end{equation}
and it is enough to check that
\begin{align}\label{gpgpfinale}
[G^+{}_\l G^+]&=0,\\
\label{gmgmfinale}
[G^-{}_\l G^-]&=0,\\
[G^+{}_\l G^-]&=4(k+h^\vee)(L+\left(\half\partial +\l \right)\widetilde J +\l^2c \vac).\label{gpgmfinale}
\end{align}

We first compute $[{G^+}{}_\l G^-]$. Using \eqref{gpgm}, we find
$$
 \begin{aligned}
 &[{G^+}_\l G^-]=4\sum_{i,r}[:u_i\overline{u}^i:_{\l}:u^r\overline{u}_r:]-2\sum_{i,r,s}[:u_i\overline{u}^i:_{\l}:\overline{[u^r,u^s]}\overline{u}_r\overline{u}_s:]\\
 &-2\sum_{i,j,r}[:\overline{[u_i,u_j]}\overline{u}^i\overline{u}^j:_\l :u^r\overline{u}_r:]+\sum_{i,j,r,s}:\overline{[u_i,u_j]}\overline{u}^i\overline{u}^j:
_\l:\overline{[u^r,u^s]}\overline{u}_r\overline{u}_s:].
 \end{aligned}
 $$
Formulas \eqref{f1} and \eqref{f2} give

\begin{align}\label{f111}
&\begin{aligned}\sum_{i,r}[:u_i\overline{u}^i:_\l:u^r\overline{u}_r:]&=\sum_{i,r}:[u_i,u^r]\overline  
u^i\overline u_r:+\sum_{i}(:u_iu^i:+k:\partial \overline u^i\,\overline u_i:)\\
& +k\l \sum_i:\overline u^i\overline u_i:+\l \sum_i[u_i,u^i]+\half k\l^2\dim \mathfrak u^+_J,\end{aligned}
\\\label{f112}
&\sum_{i,r,s}[:u_i\overline{u}^i:_\l:\overline{[u^r,u^s]}\overline{u}_r\overline{u}_s]=2\sum_{r,s}:u_s\overline{[u^r,u^s]}\overline u_r:,\\\label{f113}
&\sum_{i,j,r}[:\overline{[u_i,u_j]}\overline{u}^i\overline{u}^j:_\l:u^r\overline u_r:]=2\sum_{i,j}:u^j\overline 
{[u_i,u_j]}\overline u^i:.
\end{align}
Using formulas  \eqref{f111},  \eqref{f112},  \eqref{f113}, we obtain 
$$
 \begin{aligned}
& 4\sum_{i,r}[:u_i\overline{u}^i:_{\l}:u^r\overline{u}_r:]-2\sum_{i,r,s}[:u_i\overline{u}^i:_{\l}:\overline{[u^r,u^s]}\overline{u}_r\overline{u}_s:]-2\sum_{i,j,r}[:\overline{[u_i,u_j]}\overline{u}^i\overline{u}^j:_\l :u^r\overline{u}_r:]\\
 &= 4\sum_{i,r}:[u_i,u^r]\overline  
u^i\overline u_r:+4\sum_{i}(:u_iu^i:+k:\partial \overline u^i\,\overline u_i:) +4k\l \sum_i:\overline u^i\overline u_i:+4\l \sum_i[u_i,u^i]\\&+2 k\l^2\dim \mathfrak u^+_J\vac -4\sum_{r,s}:u_s\overline{[u^r,u^s]}\overline u_r:-4\sum_{i,j}:u^j\overline 
{[u_i,u_j]}\overline u^i:,
\end{aligned}
$$
 and
$$
 \begin{aligned}
 &\sum_{i,j,r,s}[:\overline{[u_i,u_j]}\overline{u}^i\overline{u}^j:_\l:\overline{[u^r,u^s]}\overline{u}_r\overline{u}_s:]=
\sum_{i,j,r,s,t,q}(u^t,[u_i,u_j])(u_q,[u^r,u^s])[ :\ov u_t \overline{u}^i\overline{u}^j:_\l:\ov u^q\overline{u}_r\overline{u}_s:]=\\
&\l^2\sum_{i,j}([u_i,u_j],[u^j,u^i])+2\l\sum_{i,t,q}([u_q,u^i]_{\mathfrak u^+_J},[u_i,u^t]_{\mathfrak u_J^-}):\ov u_t\ov u^q:+4\l\sum_{i,j,r}([u_i,u_j],[u^j,u^r]):\ov u^i\ov u_r:\\
&+2\sum_{i,t,q}([u_q,u^i]_{\mathfrak u^+_J},[u_i,u^t]_{\mathfrak u_J^-}):\partial\ov u_t\ov u^q:+4\sum_{i,j,r}([u_i,u_j],[u^j,u^r]):\partial\ov u^i\ov u_r:\\
&-4\sum_{i,s,t,q}([u^t,u_i]_{\mathfrak u_J^-},[u^s,u_q]_{\mathfrak u^+_J}):\ov u^i\ov u^q\ov u_t\ov u_s:+\sum_{i,j,r,s}([u_i,u_j],[u^r,u^s])):\ov u^i\ov u^j\ov u_r\ov u_s:.
 \end{aligned}
$$
Since
$$
\begin{aligned}
 &\sum_{i,r}:[u_i,u^r]\overline  
u^i\overline u_r:-\sum_{r,s}:u_s\overline{[u^r,u^s]}\overline u_r:-\sum_{i,j}:u^j\overline 
{[u_i,u_j]}\overline u^i:\\
&=\sum_{i,r}:[u_i,u^r]\overline  
u^i\overline u_r:-\sum_{r,s,t}([u^r,u^s],u_t):u_s\overline{u}^t\overline u_r:-\sum_{i,j,t}([u_i,u_j],u^t):u^j\overline 
{u}_t\overline u^i:\\&= \sum_{i,r}:[u_i,u^r]\overline  
u^i\overline u_r:  
-\sum_{r,t}:[u_t,u^r]_{\mathfrak u^+_J}\overline{u}^t\overline u_r:-\sum_{i,t}:[u^t,u_i]_{\mathfrak u_J^-}\ov u_t\overline u^i:=\sum_{i,r}:[u_i,u^r]_{\mathfrak a}\overline  
u^i\overline u_r:
\end{aligned}
 $$
and
$$
\begin{aligned}
&\sum_{i,j,r,s}([u_i,u_j],[u^r,u^s])):\ov u^i\ov u^j\ov u_r\ov u_s:
\\&=\sum_{i,j,r,s}\left(([[u_i,,u^r],u_j],u^s))
+\sum_{i,j,r,s}([u_i,[u_j,u^r]],u^s))\right):\ov u^i\ov u^j\ov u_r\ov u_s:\\
&=2\sum_{i,j,r,s}([u_i,,u^r],[u_j,u^s])):\ov u^i\ov u^j\ov u_r\ov u_s:,\end{aligned}
$$
so that
$$
\begin{aligned}
&\sum_{i,j,r,s}([u_i,u_j],[u^r,u^s])):\ov u^i\ov u^j\ov u_r\ov u_s:-4\sum_{i,s,t,q}([u^t,u_i]_{\mathfrak u_J^-},[u^s,u_q]_{\mathfrak u^+_J}):\ov u^i\ov u^q\ov u_t\ov u_s:\\
&=2\sum_{i,j,r,s}([u_i,,u^r],[u_j,u^s])):\ov u^i\ov u^j\ov u_r\ov u_s:-4\sum_{i,s,t,q}([u^t,u_i]_{\mathfrak u_J^-},[u^s,u_q]):\ov u^i\ov u^q\ov u_t\ov u_s:\\
&=2\sum_{i,j,r,s}([u^r,u_i]_{\mathfrak a}+[u^r,u_i]_{\mathfrak u^+_J},[u^s,u_j]):\ov u^i\ov u^j\ov u_r\ov u_s:-2\sum_{i,s,t,q}([u^t,u_i]_{\mathfrak u_J^-},[u^s,u_q]):\ov u^i\ov u^q\ov u_t\ov u_s:\\
&=2\sum_{i,j,r,s}([u^r,u_i]_{\mathfrak a},[u^s,u_j]_{\mathfrak a}):\ov u^i\ov u^j\ov u_r\ov u_s:,
 \end{aligned}
$$
the final outcome is that
\begin{equation}\label{G+G-}
 \begin{aligned}
 &[{G^+}_\l G^-]=4\sum_{i,r}:[u_i,u^r]_{\mathfrak a}\overline  
u^i\overline u_r:+4\sum_{i}(:u_iu^i:+k:\partial \overline u^i\,\overline u_i:) \\
&+4k\l \sum_i:\overline u^i\overline u_i:+4\l \sum_i[u_i,u^i]+2 k\l^2\dim \mathfrak u^+_J+\l^2\sum_{i,j}([u_i,u_j],[u^j,u^i])\\
&+2\l\sum_{i,t,q}([u_q,u^i]_{\mathfrak u^+_J},[u_i,u^t]_{\mathfrak u_J^-}):\ov u_t\ov u^q:+4\l\sum_{i,j,r}([u_i,u_j],[u^j,u^r]):\ov u^i\ov u_r:\\
&+2\sum_{i,t,q}([u_q,u^i]_{\mathfrak u^+_J},[u_i,u^t]_{\mathfrak u_J^-}):\partial\ov u_t\ov u^q:+4\sum_{i,j,r}([u_i,u_j],[u^j,u^r]):\partial\ov u^i\ov u_r:\\
&+2\sum_{i,j,r,s}([u^r,u_i]_{\mathfrak a},[u^s,u_j]_{\mathfrak a}):\ov u^i\ov u^j\ov u_r\ov u_s:.
 \end{aligned}
\end{equation}
 Let $\{a_i\}$ be an orthonormal basis of $\mathfrak a$,  so that
 $$
 [u_i,u^r]_{\mathfrak a}=\sum_j( [u_i,u^r],a_j)a_j,
 $$ 
 hence
\begin{equation}\label{pra}
 \begin{aligned}
 &\sum_{i,r}:[u_i,u^r]_{\mathfrak a}\overline  
u^i\overline u_r:= \sum_{i,r,j}([u_i,u^r],a_j):a_j\overline  
u^i\overline u_r:\\
&=\half  \sum_{i,r,j}([u_i,u^r],a_j):a_j\overline  
u^i\overline u_r:+\half  \sum_{i,r,j}([u_i,u^r],a_j):a_j\overline  
u^i\overline u_r:\\
&=-\half  \sum_{r,j}:a_j\overline{[a_j,u^r]} 
\overline u_r:-\half  \sum_{i,j}:a_j  
\overline{[a_j,u_i]}\ov u^i :=-\sum_{j}:a_j\Theta_\q(ad(a_j)):,
\end{aligned}
\end{equation}
 and
\begin{equation}\label{hereistheerror}
 \begin{aligned}
 &\sum_{i,j,r,s}([u^r,u_i]_{\mathfrak a},[u^s,u_j]_{\mathfrak a}):\ov u^i\ov u^j\ov u_r\ov u_s:=\sum_{i,j,r,s,t}([u^r,u_i],a_t)(a_t,[u^s,u_j]):\ov u^i\ov u^j\ov u_r\ov u_s:\\
&=\frac{1}{4}\sum_{i,s,t}:\ov u^i\ov{ [a_t,u^s]}\ov{ [u_i,a_t]}\ov u_s:+\frac{1}{4}\sum_{j,r,t}:\ov{[a_t,u^r]}\ov u^j\ov u_r\ov {[u_j,a_t]}:\\&+\frac{1}{4}\sum_{r,s,t}:\ov{[a_t,u^r]}\ov{[a_t,u^s]}\ov u_r\ov u_s:
+\frac{1}{4}\sum_{i,j,t}:\ov u^i\ov u^j\ov{[u_i,a_t]}\ov {[u_j,a_t]}:\\
&=-\frac{1}{4}\sum_{i,s,t}:\ov{ [a_t,u_i]}\ov u^i\ov{ [a_t,u^s]}\ov u_s:-\frac{1}{4}\sum_{j,r,t}:\ov{[a_t,u^r]}\ov u_r\ov {[a_t,u_j]}\ov u^j:\\&-\frac{1}{4}\sum_{r,s,t}:\ov{[a_t,u^r]}\ov u_r\ov{[a_t,u^s]}\ov u_s:
-\frac{1}{4}\sum_{i,j,t}:\ov{[a_t,u_i]}\ov u^i\ov {[a_t,u_j]}\ov u^j:.
 \end{aligned}
\end{equation}
{ Using quasi-associativity, we find that
\begin{align*}
:\ov x_1\ov x_2\ov y_1\ov y_2:&=::\ov x_1\ov x_2::\ov y_1\ov y_2::\\&-(x_2,y_1):\partial\ov x_1\ov y_2:+(x_2,y_2):\partial\ov x_1\ov y_1:+(x_1,y_1):\partial\ov x_2\ov y_2:-(x_1,y_2):\partial\ov x_2\ov y_1:,
\end{align*}
so
$$
\begin{aligned}
&\sum_{i,s,t}:\ov{[a_t,u_i]}\ov u^i\ov{[a_t,u^s]}\ov u_s:=\sum_t:\sum_i:\ov{[a_t,u_i]}\ov u^i:\sum_s:\ov{[a_t,u^s]}\ov u_s::\\
&-\sum_{i,s,t}(u^i,[a_t,u^s]):\partial\ov{[a_t,u_i]}\ov u_s:+\sum_{i,s,t}(u^i,u_s):\partial\ov{ [a_t,u_i]}\,\ov{ [a_t,u^s]}:\\
&+\sum_{i,s,t}([a_t,u_i],[a_t,u^s]):\partial\ov u^i\ov u_s:-\sum_{r,s,t}([a_t,u_i],u_s):\partial\ov u^i\ov{ [a_t,u^s]}:\\
&=\sum_t:\sum_i:\ov{[a_t,u_i]}\ov u^i:\sum_s:\ov{[a_t,u^s]}\ov u_s::\\
&+\sum_{i,t}:\partial\ov{ [a_t,u_i]}\,\ov{ [a_t,u^i]}:+\sum_{i,s,t}([a_t,u_i],[a_t,u^s]):\partial\ov u^i\ov u_s:\\
&=\sum_t:\sum_i:\ov{[a_t,u_i]}\ov u^i:\sum_s:\ov{[a_t,u^s]}\ov u_s::\\
&-\sum_{q,t,s}(u_s, [a_t,[a_t,u^q]]):\partial\ov u_q\ov u^s:-\sum_{i,s,t}(u_i,[a_t,[a_t,u^s]]):\partial\ov u^i\ov u_s:.
\end{aligned}
$$
Similarly
$$
\begin{aligned}
&\sum_{j,r,t}:\ov{[a_t,u^r]}\ov u_r\ov{[a_t,u_j]}\ov u^j:=\sum_t:\sum_r:\ov{[a_t,u^r]}\ov u_r:\sum_j:\ov{[a_t,u_j]}\ov u^j::\\
&-\sum_{s,t}:\partial\ov u^s\ov{ [a_t,[a_t,u_s]]}:-\sum_{j,t}:\partial\ov{ [a_t,[a_t,u_j]]}\ov u^j:,
\end{aligned}
$$
as well as
$$
\begin{aligned}
&\sum_{r,s,t}:\ov{[a_t,u^r]}\ov u_r\ov{[a_t,u^s]}\ov u_s:=\sum_t:\sum_r:\ov{[a_t,u^r]}\ov u_r:\sum_s:\ov{[a_t,u^s]}\ov u_s::\\
&-\sum_{s,t,r}(u_r,[a_t,[a_t,u^s]]):\partial\ov u^r\ov u_s:-\sum_{r,t,s}(u_s, [a_t,[a_t,u^r]]):\partial\ov u_r\ov u^s:
\end{aligned}
$$
and
$$
\begin{aligned}
&\sum_{i,j,t}:\ov{[a_t,u_i]}\ov u^i\ov{[a_t,u_j]}\ov u^j:=\sum_t:\sum_i:\ov{[a_t,u_i]}\ov u^i:\sum_j:\ov{[a_t,u_j]}\ov u^j::\\
&-\sum_{j,t,r}(u_j,[a_t,[a_t,u^r]]):\partial\ov u_r\ov u^j:-\sum_{i,t,r}( u_i,[a_t,[a_t,u^r]]):\partial\ov u^i\ov u_r:.
\end{aligned}
$$
Thus \eqref{hereistheerror} gives 
\begin{equation}\label{hereiscorrect}
 \begin{aligned}
 &\sum_{i,j,r,s}([u^r,u_i]_{\mathfrak a},[u^s,u_j]_{\mathfrak a}):\ov u^i\ov u^j\ov u_r\ov u_s:=-\sum_{t}:\Theta_\q(a_t)\Theta_\q(a_t):\\
&+\sum_{j,t,r}(u_j,[a_t,[a_t,u^r]]):\partial\ov u_r\ov u^j:+\sum_{i,t,r}( u_i,[a_t,[a_t,u^r]]):\partial\ov u^i\ov u_r:.
 \end{aligned}
\end{equation}}
 Substituting \eqref{pra} and \eqref{hereiscorrect} in \eqref{G+G-}, we find
 $$
 \begin{aligned}
 &[{G^+}_\l G^-]=2\sum_{j}:a_ja_j:-2\sum_{j}:(a_j)^{\mathfrak a}(a_j)^{\mathfrak a}:+4\sum_{i}(:u_iu^i:+k:\partial \overline u^i\,\overline u_i:) \\&+4k\l \sum_i:\overline u^i\overline u_i:+4\l \sum_i[u_i,u^i]+\l^2\sum_{i,j}([u_i,u_j],[u^j,u^i])\\
 &+2\l\sum_{i,t,q}([u_q,u^i]_{\mathfrak u^+_J},[u_i,u^t]_{\mathfrak u_J^-}):\ov u_t\ov u^q:+4\l\sum_{i,j,r}([u_i,u_j],[u^j,u^r]):\ov u^i\ov u_r:\\
&+2\sum_{i,t,q}([u_q,u^i]_{\mathfrak u^+_J},[u_i,u^t]_{\mathfrak u_J^-}):\partial\ov u_t\ov u^q:+4\sum_{i,j,r}([u_i,u_j],[u^j,u^r]):\partial\ov u^i\ov u_r:\\
&+2 k\l^2\dim \mathfrak u^+_J {  +2\sum_{j,t,r}(u_j,[a_t,[a_t,u^r]]):\partial\ov u_r\ov u^j:+2\sum_{i,t,r}( u_i,[a_t,[a_t,u^r]]):\partial\ov u^i\ov u_r:,}\end{aligned}
 $$
so, by the explicit expression for $L$ given in \eqref{gsimpleformulas},
$$
 \begin{aligned}
 &[{G^+}_\l G^-]=4(k+h^\vee)L+2\sum_i\partial[u_i,u^i]-2h^\vee\sum_i:\partial \ov u_i\ov u^i:-2h^\vee\sum_i:\partial \ov u^i\ov u_i:\\
 &+2k\sum_{i}\partial: \overline u^i\,\overline u_i: +4k\l \sum_i:\overline u^i\overline u_i:+4\l \sum_i[u_i,u^i]\\
 &+\l^2\sum_{i,j}([u_i,u_j],[u^j,u^i])\vac +2\l\sum_{i,t,q}([u_q,u^i]_{\mathfrak u^+_J},[u_i,u^t]_{\mathfrak u_J^-}):\ov u_t\ov u^q:\\&+4\l\sum_{i,j,r}([u_i,u_j],[u^j,u^r]):\ov u^i\ov u_r:+2\sum_{i,t,q}([u_q,u^i]_{\mathfrak u^+_J},[u_i,u^t]_{\mathfrak u_J^-}):\partial\ov u_t\ov u^q:\\&+4\sum_{i,j,r}([u_i,u_j],[u^j,u^r]):\partial\ov u^i\ov u_r:+2 k\l^2\dim \mathfrak u^+_J\vac \\&+2\sum_{j,t,r}(u_j,[a_t,[a_t,u^r]]):\partial\ov u_r\ov u^j:+2\sum_{i,t,r}( u_i,[a_t,[a_t,u^r]]):\partial\ov u^i\ov u_r:.\end{aligned}
 $$
Note that
$$
 \begin{aligned}
 &\sum_{i}([u_q,u^i]_{\mathfrak u^+_J},[u_i,u^t]_{\mathfrak u_J^-})\\&=\sum_{i}([u_q,u^i],[u_i,u^t])-\sum_{i}([u_q,u^i]_{\mathfrak a},[u_i,u^t]_{\mathfrak a})-\sum_{i}([u_q,u^i]_{\mathfrak u_J^-},[u_i,u^t]_{\mathfrak u^+_J})\\
 &=\sum_{i}([u_q,u^i],[u_i,u^t])-\sum_{i}([u_q,u^i]_{\mathfrak a},[u_i,u^t]_{\mathfrak a})-\sum_{i}([u_q,u^i],u_r)(u^r,[u_i,u^t])\\
 &=\sum_{i}([u_q,u^i],[u_i,u^t])-\sum_{i,t,q}([u_q,u^i]_{\mathfrak a},[u_i,u^t]_{\mathfrak a})-\sum_{i,r}([u_r,u_q],u^i)(u_i,[u^t,u^r])\\
  &=\sum_{i}([u_q,u^i],[u_i,u^t])-\sum_{i}([u_q,u^i]_{\mathfrak a},[u_i,u^t]_{\mathfrak a}):-\sum_{r}([u_q,u_r],[u^r,u^t])
    \end{aligned}
 $$
 and, since
\begin{equation}\label{ujuj}
([u_q,u^i],[u_i,u^t])=([u_q,u_i],[u^i,u^t])+([u_q,[u^i,u_i]],u^t),
\end{equation}
we obtain
 $$
 \begin{aligned}
&\sum_{i}([u_q,u^i]_{\mathfrak u^+_J},[u_i,u^t]_{\mathfrak u_J^-})=\sum_{i}([u_q,[u^i,u_i]],u^t)-\sum_{i}([u_q,u^i]_{\mathfrak a},[u_i,u^t]_{\mathfrak a})\\
&=\sum_{i}([u_q,[u^i,u_i]],u^t)-\sum_{i,j}([u_q,u^i],a_j)(a_j,[u_i,u^t])\\
&=\sum_{i}([u_q,[u^i,u_i]],u^t)-\sum_{j}(u_q,[a_j,[a_j,u^t]]).
  \end{aligned}
 $$
Next observe that
$$
 \begin{aligned}
&\sum_{j}([u_i,u_j],[u^j,u^r])=\sum_{j}(u_i,[u_j,[u^j,u^r]])+\sum_{j}(u_i,[u^j,[u_j,u^r]])\\
&-\sum_{j}(u_i,[u^j,[u_j,u^r]])+\sum_{j}(u_i,[a_j,[a_j,u^r]])-\sum_{j}(u_i,[a_j,[a_j,u^r]]),
 \end{aligned}
 $$
 so, since for all $u\in\g$,
 $$
\sum_j[u_j,[u^j,u]]+\sum_j[u^j,[u_j,u]]+\sum_j[a_j,[a_j,u]]=2 h^\vee u,
$$
we have
 $$
  \begin{aligned}
&\sum_{j}([u_i,u_j],[u^j,u^r])=2h^\vee\d_{i,r}-\sum_{j}([u_i,u^j],[u_j,u^r])-\sum_{j}(u_i,[a_j,[a_j,u^r]).
\end{aligned}
$$
Now, using \eqref{ujuj}, we rewrite the last equation as
$$
\begin{aligned}
\sum_{j}([u_i,u_j],[u^j,u^r])=&2h^\vee\d_{i,r}-\sum_{j}([u_i,[u^j,u_j]],u^r)\\
&-\sum_{j}([u_i,u_j],[u^j,u^r])-\sum_{j}(u_i,[a_j,[a_j,u^r]),
\end{aligned}
 $$
 hence
 $$
 \begin{aligned}
2\sum_{j}([u_i,u_j],[u^j,u^r])=2h^\vee\d_{i,r}-\sum_{j}([u_i,[u^j,u_j]],u^r)-\sum_{j}(u_i,[a_j,[a_j,u^r]).
\end{aligned}
$$

Substituting in $[{G^+}_\l G^-]$ we find
\begin{equation}\label{Gpgm}
 \begin{aligned}
 [{G^+}_\l G^-]=&4(k+h^\vee)L\\
 &+2\partial\left((k+h^\vee)\sum_{i}: \overline u^i\,\overline u_i:+\sum_i[u_i,u^i]+\sum_{i,t,q}([u_q,[u^i,u_i]],u^t):\ov u_t\ov u^q:\right)\\
 & +4\l\left((k+h^\vee)\sum_i:\overline u^i\overline u_i:+ \sum_i[u_i,u^i]+\sum_{i,t,q}([u_q,[u^i,u_i]],u^t):\ov u_t\ov u^q:\right)\\
&+2 k\l^2\dim \mathfrak u^+_J\vac  +\l^2\sum_{i,j}([u_i,u_j],[u^j,u^i])\vac.\end{aligned}
\end{equation}
Recalling the definitions of $\widetilde J$ and $c$ given in \eqref{gsimpleformulas},  we see that \eqref{Gpgm} is precisely \eqref{gpgmfinale}.

We now compute $[G^+{}_\lambda G^+]$. 
Write explicitly 
\begin{align*}
[{G^+}_\l G^+]= &4\sum_{i,j}[:u_i\overline{u}^i:_\l:u_j\overline{u}^j:]-4\sum_{i,j,r}[:u_i\overline{u}^i:_\l:\overline{[u_j,u_r]}\overline{u}^j\overline{u}^r:]\\&+\sum_{i,j,r,s}[:\overline{[u_i,u_j]}\overline{u}^i\overline{u}^j:_\l:\overline{[u_r,u_s]}\overline{u}^r\overline{u}^s:].
\end{align*}

It follows from   formulas \eqref{f1}, \eqref{f2}, \eqref{f3} that 
\begin{equation}\label{f8}
[:u_i\overline u^i:_\la:u_j\overline u^j:]=:[u_i,u_j]\overline  
u^i\overline u^j:,
\end{equation}
\begin{equation}\label{f9}
[:u_i\overline u^i:_\la:\overline{[u_r,u_s]}\overline{u}^r\overline{u}^s:]=[:\overline{[u_r,u_s]}\overline{u}^r\overline{u}^s:_\l:u_i\overline u^i:]=(u^i,[u_r,u_s]):u_i\overline{u}^r\overline
u^s:,
\end{equation} 
and
 \begin{equation}\label{f11}
 \begin{aligned}
&[:\overline{[u_i,u_j]}\overline{u}^i\overline{u}^j:_\l :\overline{[u_r,u_s]}\overline{u}^r\overline{u}^s:]\\
&=\l\left(([u_i,u_j],u^s)(u^j,[u_r,u_s]):\ov u^i\ov u^r:-([u_i,u_j],u^r)(u^j,[u_r,u_s]):\ov u^i\ov u^s:\right)\\
&+\l\left(-([u_i,u_j],u^s)(u^i,[u_r,u_s]):\ov u^j\ov u^r:+([u_i,u_j],u^r)(u^i,[u_r,u_s]):\ov u^j\ov u^s:\right)\\
&-\sum_t(([u_i,u_j],u^r)([u_r,u_s],u^t):\ov u^i\ov u^j\ov{u}_t\ov u^s:+([u_i,u_j],u^s)([u_r,u_s],u^t):\ov u^i\ov u^j\ov u_t\ov u^r:)\\
&-\sum_t(([u_r,u_s],u^i)([u_i,u_j],u^t): \ov u_t\ov u^j\ov u^r\ov u^s:+([u_r,u_s],u^j)([u_i,u_j],u^t): \ov u_t\ov u^i\ov u^r\ov u^s:)\\
&+([u_i,u_j],u^s)(u^j,[u_r,u_s])\partial\ov u^i\ov u^r:-([u_i,u_j],u^r)(u^j,[u_r,u_s]):\partial\ov u^i\ov u^s:\\
&-([u_i,u_j],u^s)(u^i,[u_r,u_s]):\partial\ov u^j\ov u^r:+([u_i,u_j],u^r)(u^i,[u_r,u_s]):\partial\ov u^j\ov u^s:.
\end{aligned}\end{equation} 
and apply \eqref{f8} and \eqref{f9} to obtain
$$
\begin{aligned}
&[G^+_\l G^+]\\&=4\sum_{i,j}:[u_i,u_j]\overline{u}^i\overline{u}^j:-4\sum_{i,j,r}(u^i,[u_j,u_r]):u_i\overline{u}^j\overline
u^r:+\sum_{i,j,r,s}[:\overline{[u_i,u_j]}\overline{u}^i\overline{u}^j:_\l:\overline{[u_r,u_s]}\overline{u}^r\overline{u}^s:]\\
&=\sum_{i,j,r,s}[:\overline{[u_i,u_j]}\overline{u}^i\overline{u}^j:_\l:\overline{[u_r,u_s]}\overline{u}^r\overline{u}^s:].
\end{aligned}
$$

We now apply \eqref{f11} to get
\begin{equation}\label{master}
\begin{aligned}
&[G^+{}_\la G^+]=4\l\sum_{i,j,r}([u^j,u_r],[u_i,u_j]):\ov u^i\ov u^r:\\
&-2\sum_{i,j,r,s}\left(([u_r,u_s],[u_j,u^i]):\ov u^r\ov u^j\ov{u}_i\ov u^s:+([u_r,u_s],[u_j,u^i]): \ov u_i\ov u^j\ov u^r\ov u^s:\right)\\
&+4\sum_{i,j,r}([u^j,u_r],[u_i,u_j]):\partial\ov u^i\ov u^r:.
\end{aligned}
\end{equation}

Since $\mathfrak u^+_J$ is isotropic and stable under the action of $C_\g-C_\aa$, we see that
$$
\begin{aligned}&\sum_j([u^j,u_r],[u_i,u_j])=\half(u_r,\sum_j[u^j,[u_j,u_i]])+\half(u_r,\sum_j[u_j,[u^j,u_i]])-\half(u_r,[\sum_j[u_j,u^j],u_i])\\
&=\half(u_r,(C_\g-C_\aa)u_i)-\half(u_r,[\sum_j[u_j,u^j],u_i])=-\half(u_r,[\sum_j[u_j,u^j],u_i]).
\end{aligned}
$$
By the explicit expression  given in \eqref{rotilde}, we see that $\mathfrak u^+_J$ is  stable also under the action of $\sum_j[u_j,u^j]$,
hence 
$$
\sum_j([u^j,u_r],[u_i,u_j])=0
$$
and 
$$
\begin{aligned}
&[G^+{}_\la G^+]=
&-2\sum_{i,j,r,s}\left(([u_r,u_s],[u_j,u^i]):\ov u^r\ov u^j\ov{u}_i\ov u^s:+([u_r,u_s],[u_j,u^i]): \ov u_i\ov u^j\ov u^r\ov u^s:\right).
\end{aligned}
$$

Recall that, if $a_i\in\q$, $i=1,\ldots,k$, and
$\s$ is a permutation of $1,\ldots,k$, then 
\begin{equation}\label{f30}
:\ov a_{\s(1)}\cdots \ov a_{\s(k)}:=(-1)^\s:\ov a_1\cdots \ov a_k:.
\end{equation}
This implies that
\begin{align}
[{G^+}_\l G^+]\label{f22}=-4\sum_{i,j,r,s}([u_r,u_s],[u^i,u_j]):\ov u_i\ov u^r\ov u^j\ov u^s:\,.
\end{align}
We claim that $\sum_{i,j,r,s}([u_r,u_s],[u^i,u_j]):\ov u_i\ov u^r\ov u^j\ov u^s:=0$.
By \eqref{f30}, $\ov u_i\ov u^r\ov u^j\ov u^s:\ne0$ only if $r,j,s$ are distinct. It follows from \eqref{f30} that
\begin{align*}
&\sum_{i,j,r,s}([u_r,u_s],[u^i,u_j]):\ov u_i\ov u^r\ov u^j\ov u^s:=\sum_i :\ov u_i(\sum_{j,r,s}([u_r,u_s],[u^i,u_j]):\ov u^r\ov u^j\ov u^s:):\\
&=\sum_i :\ov u_i(-\sum_{j,r,s}([[u_r,u_s],u_j],u^i):\ov u^r\ov u^j\ov u^s:):\\
&=-\sum_{j,r,s}:\ov{[[u_r,u_s],u_j]}\ov u^r\ov u^j\ov u^s:\\
&=-\sum_{r<j<s}(:\ov{[[u_r,u_s],u_j]}\ov u^r\ov u^j\ov u^s:+:\ov{[[u_j,u_s],u_r]}\ov u^j\ov u^r\ov u^s:+:\ov{[[u_r,u_j],u_s]}\ov u^r\ov u^s\ov u^j:)\\
&-\sum_{r<j<s}(:\ov{[[u_s,u_r],u_j]}\ov u^s\ov u^j\ov u^r:+:\ov{[[u_s,u_j],u_r]}\ov u^s\ov u^r\ov u^j:+:\ov{[[u_j,u_r],u_s]}\ov u^j\ov u^s\ov u^r:)\\
&=-\sum_{r<j<s}:(\ov{[[u_r,u_s],u_j]}+\ov{[[u_s,u_j],u_r]}+:\ov{[[u_j,u_r],u_s]})\ov u^r\ov u^j\ov u^s
\\
&-\sum_{r<j<s}:(\ov{[[u_j,u_s],u_r]}+\ov{[[u_r,u_j],u_s]}+\ov{[[u_s,u_r],u_j]})\ov u^j\ov u^r\ov u^s:=0
\end{align*}
by Jacobi identity. We have thus proven that  $[G^+{}\!\!_\l G^+]=0$.
The same argument proves that $[G^-{}\!\!_\l G^-]=0$, thus the proof of Proposition \ref{ParabisN=2} is complete in the special case $\aa=\mathfrak m$.

\begin{remark}\label{GGJ}It follows from \eqref{n21} that 
$$
[\widetilde G^-{}_\l \widetilde G^+]=[\widetilde G^+{}_{-\l-\partial} \widetilde G^-]=L-\left(\half\partial +\l \right)\widetilde J +\l^2c \vac,
$$
so
$$
[\widetilde G^+{}_{\l} \widetilde G^-]-[\widetilde G^-{}_\l \widetilde G^+]=(\partial +2\l)\widetilde J.
$$

On the other hand
$$
[\widetilde G^+{}_\l \widetilde G^-]=[(\widetilde G+\sqrt{-1}\widetilde G_J)_\l (\widetilde G-\sqrt{-1}\widetilde G_J)]= L+\frac{\l^2}{6}C \vac+\sqrt{-1}([(G_J)_\l G]-[G_\l G_J])
$$
and 
$$
[\widetilde G^-{}_\l \widetilde G^+]=[(\widetilde G-\sqrt{-1}\widetilde G_J)_\l (\widetilde G+\sqrt{-1}\widetilde G_J)]= L+\frac{\l^2}{6}C \vac-\sqrt{-1}([(G_J)_\l G]-[G_\l G_J]),
$$
thus 
$$
(\partial +2\l)\widetilde J=2\sqrt{-1}([(G_J)_\l G]-[G_\l G_J]).
$$
Since
$$
0=[\widetilde G^+{}_{\l} \widetilde G^+]=[(\widetilde G+\sqrt{-1}\widetilde G_J)_\l (\widetilde G+\sqrt{-1}\widetilde G_J)]=\sqrt{-1}([G_\l G_J]+[(G_J)_\l G])$$
we obtain $
[\widetilde G_\l \widetilde G_J]=-[(\widetilde G_J)_\l \widetilde G]$
and
\begin{equation}\label{GGJnew}
[\widetilde G_\l \widetilde G_J]=\tfrac{\sqrt{-1}}{4}(\partial+2\l)\widetilde J.
\end{equation}
On the other hand, it is clear that \eqref{GGJnew}, combined with \eqref{NSG} and \eqref{NSGJ}, implies \eqref{n22}, \eqref{n23}, and \eqref{n21}, so these latter relations are equivalent to \eqref{GGJnew}.
\end{remark}

We now complete the proof of Proposition \ref{ParabisN=2} with
$$
\mathfrak m=\aa\oplus \mathfrak t
$$
with $\mathfrak t$ a subspace of the center of $\mathfrak m$. In light of Remark \ref{GGJ}, it is enough to check that  \eqref{GGJnew} holds also in this case.

If $x\in\g$, set 
$$
\tilde x =x+\theta_\g(ad(x))\in V^{\mathbf k}(\g)\otimes F(\ov\g).
$$
Let  $\mathfrak u$ be  a subalgebra of $\g$ such that $\mathfrak u=\oplus_i\mathfrak u\cap\g_i$ and $(\cdot,\cdot)_{|\mathfrak u\times \mathfrak u}$ is nondegenerate. Let $\{u_{ij}\}$ be a basis of $\mathfrak u\cap\g_i$ and let $\{u^{ij}\}$ be itsd dual basis. We set
$$
G_{\mathfrak u}=\sum_{i,j} :\tilde u_{ij}\ov u^{ij}:+\frac{1}{3}\sum_{i,j,t}:\ov{[u_{ij} ,u_{it}]}\ov u^{ij}\ov u^{it}:
$$
and
$$
\widetilde G_{\mathfrak u}=\sum_{i} \frac{1}{2\sqrt{k_i+h_i}}\left(\sum_j:\tilde u_{ij}\ov u^{ij}:+\frac{1}{3}\sum_{j,t}:\ov{[u_{ij} ,u_{it}]}\ov u^{ij}\ov u^{it}:\right).
$$
 Recall that (see \cite[(4.3),(4.4)]{KMPD})
\begin{equation}\label{GU}
 [\tilde u_\l G_{\mathfrak u}]=\l (k_i+h_i)\ov u,\ u\in\mathfrak u\cap \g_i, \quad [\ov u_\l  G_{\mathfrak u}]=\tilde u.
\end{equation}
 Recall also that $G=G_\g-G_\aa$ (see \cite[Lemma 4.5]{KMPD}). In particular we have 
 $$
 G=G_\g-G_{\mathfrak m}+G_{\mathfrak m}-G_\aa.
 $$
 hence 
  $$
 \widetilde G=\widetilde G_\g-\widetilde G_{\mathfrak m}+\widetilde G_{\mathfrak m}-\widetilde G_\aa.
 $$
Set $\mathfrak f=\mathfrak n^+_J\oplus \mathfrak n^-_J$. By \eqref{GU}, $[(\widetilde G_\g-\widetilde G_{\mathfrak m})_\l (\widetilde G_{\mathfrak m}-\widetilde G_{\mathfrak a})]=0$ so
 $$
 \begin{aligned}
 [(\widetilde G_\l \widetilde G_J]=[(\widetilde G_\g-\widetilde G_{\mathfrak m})_\l J(\widetilde G_{\mathfrak g}-\widetilde G_{\mathfrak m})]+[(\widetilde G_{\mathfrak m}-\widetilde G_{\mathfrak a})_\l J(\widetilde G_{\mathfrak m}-\widetilde G_{\mathfrak a})]\\=\tfrac{\sqrt{-1}}{4}(\partial+2\l)\widetilde J_{\mathfrak m}+[(\widetilde G_{\mathfrak m}-\widetilde G_{\mathfrak a})_\l J(\widetilde G_{\mathfrak m}-\widetilde G_{\mathfrak a})]
 \end{aligned}
 $$
 where $\widetilde J_{\mathfrak m}=\sqrt{-1}\Theta_{\mathfrak f}(J)+2\widetilde \rho+ 2\Theta_{\mathfrak f}(ad(\widetilde \rho))=\sqrt{-1}\Theta_{\mathfrak f}(J)+2\widetilde \rho+ 2\Theta_{\q}(ad(\widetilde \rho))$.
 \begin{lemma}Assume \begin{equation}\label{m=ait}
\mathfrak m=\aa\oplus \mathfrak t,\ [\mathfrak m,\mathfrak m]\subseteq \aa
\end{equation}
with $\mathfrak t$ a $J$-stable subspace of the center of $\mathfrak m$. Choose a basis $\{t_i\}$ of $\mathfrak t^+$ and let $\{t^i\}$ be the basis of $\mathfrak t^-$ dual to $\{t_i\}$. 
Then
 $$
 [(\widetilde G_{\mathfrak m}-\widetilde G_{\mathfrak a})_\l J(\widetilde G_{\mathfrak m}-\widetilde G_{\mathfrak a})]=\frac{\sqrt{-1}}{4}(\partial+2\l)\widetilde J_{\mathfrak t}
$$
with $\widetilde J_{\mathfrak t}=-\sum_i :\overline {t_{i}}\overline t^{i}: $.

In particular, Proposition \ref{ParabisN=2} holds with $\aa$ as in \eqref{m=ait}.
 \end{lemma}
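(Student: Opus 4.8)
The plan is to first make $\widetilde G_{\mathfrak m}-\widetilde G_{\mathfrak a}$ completely explicit and then recognize it as an abelian analogue of $\widetilde G$, for which the bracket becomes a free-field computation. Choose a basis of $\mathfrak m$ adapted to the orthogonal decomposition $\mathfrak m=\mathfrak a\oplus\mathfrak t$ (recall $\mathfrak t\subset\mathfrak q$, which is orthogonal to $\mathfrak a$). By \eqref{m=ait} we have $[\mathfrak t,\mathfrak t]=0$ and $[\mathfrak t,\mathfrak a]=0$, so in $G_{\mathfrak m}$ every cubic term $\overline{[u,u']}$ and every linear $\mathfrak a$-term coincides with the corresponding term of $G_{\mathfrak a}$; hence the differences telescope to the purely linear $\widetilde G_{\mathfrak m}-\widetilde G_{\mathfrak a}=\widetilde G_{\mathfrak t}=\sum_i\tfrac{1}{2\sqrt{k_i+h_i}}\sum_t:\tilde t\,\overline{t^*}:$, with $t$ running over a basis of $\mathfrak t\cap\g_i$. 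I would then split this along the $J$-eigenbasis $\{t_\mu\}\subset\mathfrak t^+$ with dual basis $\{t^\mu\}\subset\mathfrak t^-$, setting $A_i=\sum_\mu:\tilde t_\mu\overline{t^\mu}:$ and $B_i=\sum_\mu:\tilde t^\mu\overline{t_\mu}:$, so that $\widetilde G_{\mathfrak t}=\sum_i\tfrac{1}{2\sqrt{k_i+h_i}}(A_i+B_i)$. Since $J$ commutes with $\ad t$ on $\mathfrak q$ (both act as scalars on each of $\mathfrak n^\pm_J$ and $\mathfrak t^\pm$), one gets $J(\tilde t)=\tilde t$, and therefore $J\widetilde G_{\mathfrak t}=\sum_i\tfrac{\sqrt{-1}}{2\sqrt{k_i+h_i}}(-A_i+B_i)$.

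The heart of the argument is a short list of constituent $\lambda$-brackets. Because $\mathfrak t^\pm$ are isotropic and abelian I would check $[\overline{t^\mu}{}_\l\overline{t_\nu}]=\delta_{\mu\nu}$, $[\tilde t_\mu{}_\l\overline{t_\nu}]=[\tilde t_\mu{}_\l\overline{t^\nu}]=\overline{[t_\mu,\,\cdot\,]}=0$, and, using $[\ad t_\mu,\ad t^\nu]=\ad[t_\mu,t^\nu]=0$ together with the trace identity $\mathrm{Tr}_\g(\ad t_\mu\,\ad t^\nu)=2h_i\delta_{\mu\nu}$, that $\tilde t_\mu$ and $\tilde t^\nu$ behave like abelian currents at level $k_i+h_i$: $[\tilde t_\mu{}_\l\tilde t^\nu]=\l(k_i+h_i)\delta_{\mu\nu}\vac$ and $[\tilde t_\mu{}_\l\tilde t_\nu]=0$ (consistently with \eqref{GU}). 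This is the step I expect to require the most care, as it is exactly where the cubic fermionic piece $\Theta_\g(\ad t)$ of $\tilde t$ enters, and one must verify that it contributes only the scalar $h_i$ through the trace and nothing noncentral. Granting these brackets, every constituent bracket inside $A_i$ (resp.\ $B_i$) vanishes, so $[A_i{}_\l A_i]=[B_i{}_\l B_i]=0$, and the only surviving cross-term $[A_i{}_\l B_i]$ is governed by the abelian-current version of \eqref{f1}.

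Finally I would assemble the pieces. Formula \eqref{f1}, with $[\tilde t_\mu,\tilde t^\nu]=0$ and level $k_i+h_i$, gives $[A_i{}_\l B_i]=\sum_\mu\!\big(:\tilde t_\mu\tilde t^\mu:+(k_i+h_i):\partial\overline{t^\mu}\,\overline{t_\mu}:+(k_i+h_i)\l:\overline{t^\mu}\,\overline{t_\mu}:\big)+\tfrac12\l^2(k_i+h_i)(\dim\mathfrak t^+\cap\g_i)\vac$, and $[B_i{}_\l A_i]$ is the same with the roles of $\{t_\mu\}$ and $\{t^\mu\}$ exchanged. In the difference $[A_i{}_\l B_i]-[B_i{}_\l A_i]$ the symmetric Sugawara terms $:\tilde t_\mu\tilde t^\mu:-:\tilde t^\mu\tilde t_\mu:$ and the $\l^2$ scalars cancel (each being a total derivative of a constant), while the fermionic terms combine, via $:\overline{t^\mu}\,\overline{t_\mu}:=-:\overline{t_\mu}\,\overline{t^\mu}:$, into $(k_i+h_i)(\partial+2\l)\sum_\mu:\overline{t^\mu}\,\overline{t_\mu}:=(k_i+h_i)(\partial+2\l)\widetilde J_{\mathfrak t,i}$. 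Since $[A_i{}_\l A_i]=[B_i{}_\l B_i]=0$, this yields $[(A_i+B_i)_\l(-A_i+B_i)]=(k_i+h_i)(\partial+2\l)\widetilde J_{\mathfrak t,i}$, and summing against the prefactors $\tfrac{\sqrt{-1}}{4(k_i+h_i)}$ gives exactly $[(\widetilde G_{\mathfrak m}-\widetilde G_{\mathfrak a})_\l J(\widetilde G_{\mathfrak m}-\widetilde G_{\mathfrak a})]=\tfrac{\sqrt{-1}}{4}(\partial+2\l)\widetilde J_{\mathfrak t}$. Combined with the already established $\mathfrak m$-contribution $\tfrac{\sqrt{-1}}{4}(\partial+2\l)\widetilde J_{\mathfrak m}$ and the decomposition $\Theta_\q(J)=\Theta_{\mathfrak f}(J)+\Theta_{\mathfrak t}(J)$, which via \eqref{betterJtilde} gives $\widetilde J_{\mathfrak m}+\widetilde J_{\mathfrak t}=\widetilde J$, this proves \eqref{GGJnew} in the present generality, whence Remark \ref{GGJ} supplies \eqref{n22}, \eqref{n23}, \eqref{n21} and completes Proposition \ref{ParabisN=2} for $\mathfrak a$ as in \eqref{m=ait}.
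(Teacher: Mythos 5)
Your proof is correct and follows essentially the same route as the paper: reduce $\widetilde G_{\mathfrak m}-\widetilde G_{\mathfrak a}$ to the purely linear expression $\sum:\tilde t\,\ov{t^*}:$ with $\tilde t=t+\Theta_{\mathfrak f}(ad(t))$, observe that $J$ fixes these dressed currents, and compute the bracket via \eqref{f1} for effective abelian currents of level $k_i+h_i^\vee$, with the Sugawara and $\l^2$ terms cancelling. The only cosmetic difference is that you work in the $J$-eigenbasis of $\mathfrak t$ throughout, so the cancellations come from isotropy of $\mathfrak t^\pm$, whereas the paper uses an orthonormal basis and kills the unwanted terms via $(t,J(t))=0$ and $(t,J(t'))=-(t',J(t))$, passing to the eigenbasis only at the end.
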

\begin{proof}Set $\{t_{ij}\}$ to be an orthonormal basis of $\mathfrak t\cap\g_i$. 
Recall that we set for brevity $\mathfrak f=\mathfrak n^+_J\oplus \mathfrak n^-_J$. Using \cite[Lemma 4.5]{KMPD} applied to $\mathfrak m$ we can write
$$
\widetilde G_{\mathfrak m}-\widetilde G_{\mathfrak a}=\sum_i\frac{1}{2\sqrt{k_i+h_i}}\sum_j:(\tilde t_{ij}-\theta_{\mathfrak m}(ad(t_{ij})))\ov t_{ij}:=\sum_i\frac{1}{2\sqrt{k_i+h_i}}\sum_j:(t_{ij}+\theta_{\mathfrak f}(ad(t_{ij})))\ov t_{ij}:.
$$
 It follows that
\begin{align*}
 &[(\widetilde G_{\mathfrak m}-\widetilde G_{\mathfrak a})_\l J(\widetilde G_{\mathfrak m}-\widetilde G_{\mathfrak a})]=\\&\sum_{i,r}
\frac{1}{4\sqrt{k_i+h_i}\sqrt{k_r+h_r}}\sum_{j,s}[:(t_{ij}+\theta_{\mathfrak f}(ad(t_{ij})))\ov t_{ij}:_\l:(t_{rs}+J(\theta_{\mathfrak f}(ad(t_{rs})))\ov{J( t_{rs})}:].
\end{align*}
 Since
 $$
 \begin{aligned}
 J(\theta_{\mathfrak f}(ad(t_{rs})))&=\half\sum_{h}J(:\ov{[t_{rs},u_{rh}]}\ov u^{rh}:)+\half\sum_{h}J(:\ov{[t_{rs},u^{rh}]}\ov u_{rh}:)\\
 &=\half\sum_{h}\a_{rh}(t_{rs}):\ov{J(u_{rh})}\ov {J(u^{rh})}:-\half\sum_{h}\a_{rh}(t_{rs}):\ov{J(u^{rh})}\ov{J( u_{rh})}:\\
 &=\half\sum_{h}\a_{rh}(t_{rs}):\ov u_{rh}\ov u^{rh}:-\half\sum_{h}\a_{rh}(t_{rs}):\ov u^{rh}\ov u_{rh}:=\theta_{\mathfrak f}(ad(t_{rs})),
 \end{aligned}
 $$
 we have
 $$
 \begin{aligned}
 [:(t_{ij}+\theta_{\mathfrak f}(ad(t_{ij})))\ov t_{ij}:&_\l:(t_{rs}+J(\theta_{\mathfrak f}(ad(t_{rs})))\ov{J( t_{rs})}:]\\
& =[:(t_{ij}+\theta_{\mathfrak f}(ad(t_{ij})))\ov t_{ij}:_\l:(t_{rs}+\theta_{\mathfrak f}(ad(t_{rs}))\ov{J( t_{rs})}:].
 \end{aligned}
 $$
Recall  that, similarly to the mapping $a\mapsto a^\aa$ from Section \ref{setup},  the mapping  $m\mapsto m+\theta_{\mathfrak f}(ad(m))$ extends to a vertex algebra map $V^{\mathbf k+\mathbf h^\vee-\mathbf h^\vee_{\mathfrak m}}(\mathfrak m)\to V^{\mathbf k}(\g)\otimes F(\ov{\mathfrak f})\subset V^{\mathbf k}(\g)\otimes F(\ov{\mathfrak q})$, where $\mathbf h^\vee_{\mathfrak m}(m,m')=(C_{\mathfrak m}m,m')$. Since $\mathfrak t$ is central in $\mathfrak m$ we see that $\mathbf h^\vee_{\mathfrak m}(t,t')=0$ if $t,t'\in\mathfrak t$. Applying \eqref{f1} in $V^{\mathbf k+\mathbf h^\vee-\mathbf h^\vee_{\mathfrak m}}(\mathfrak m)\otimes F(\ov{\mathfrak t})$, we find
 $$
 \begin{aligned}
& [:(t_{ij}+\theta_{\mathfrak f}(ad(t_{ij})))\ov t_{ij}:_\l:(t_{rs}+\theta_{\mathfrak f}(ad(t_{rs}))\ov{J( t_{rs})}:]=\\
 &={ (t_{ij},J(t_{rs})):(t_{ij}+\theta_{\mathfrak f}(ad(t_{ij})))(t_{rs}+\theta_{\mathfrak f}(ad(t_{rs})):}+\\
 &(k_i+h^\vee_i)\d_{i,r}\d_{j,s}(:\partial \overline t_{ij}\,\overline {J(t_{rs})}: +\la:\overline t_{ij}\,\overline {J(t_{rs})}: +\half \l^2( t_{ij},J(t_{rs})):.
 \end{aligned}
 $$
 so
  $$
  \begin{aligned}
 &[(\widetilde G_{\mathfrak m}-\widetilde G_{\mathfrak a})_\l J(\widetilde G_{\mathfrak m}-\widetilde G_{\mathfrak a})]\\
 &=\sum_{i,r}
\frac{1}{4\sqrt{k_i+h_i}\sqrt{k_r+h_r}}\sum_{j,s}{ (t_{ij},J(t_{rs})):(t_{ij}+\theta_{\mathfrak f}(ad(t_{ij})))(t_{rs}+\theta_{\mathfrak f}(ad(t_{rs})):}\\
&+\sum_{i,r}
\frac{1}{4\sqrt{k_i+h_i}\sqrt{k_r+h_r}}\sum_{j,s}(k_i+h^\vee_i)\d_{i,r}\d_{j,s}(:\partial \overline t_{ij}\,\overline {J(t_{rs})}: +\la:\overline t_{ij}\,\overline {J(t_{rs})}: +\half \l^2( t_{ij},J(t_{rs})).
\end{aligned}
 $$
 Now observe that $(t,J(t))=-(J(t),t)=-(t,J(t))$ hence $(t,J(t))=0$, so
 $$
  \begin{aligned}
&  \sum_{i,r}
\frac{1}{4\sqrt{k_i+h_i}\sqrt{k_r+h_r}}\sum_{j,s}{ (t_{ij},J(t_{rs})):(t_{ij}+\theta_{\mathfrak f}(ad(t_{ij})))(t_{rs}+\theta_{\mathfrak f}(ad(t_{rs})):}\\
 &=\sum_{(i,j)\ne(r,s)}
\frac{1}{4\sqrt{k_i+h_i}\sqrt{k_r+h_r}}{ (t_{ij},J(t_{rs})):(t_{ij}+\theta_{\mathfrak f}(ad(t_{ij})))(t_{rs}+\theta_{\mathfrak f}(ad(t_{rs})):}
\end{aligned}
$$
Moreover $(t,J(t'))=-(t',J(t))$, so, ordering the pairs $(i,j)$ lexicographically, we have
$$
  \begin{aligned}
& \sum_{(i,j)\ne(r,s)}
\frac{1}{4\sqrt{k_i+h_i}\sqrt{k_r+h_r}}{ (t_{ij},J(t_{rs})):(t_{ij}+\theta_{\mathfrak f}(ad(t_{ij})))(t_{rs}+\theta_{\mathfrak f}(ad(t_{rs})):}\\
&=\sum_{(i,j)<(r,s)}
\frac{1}{4\sqrt{k_i+h_i}\sqrt{k_r+h_r}}{ (t_{ij},J(t_{rs})):(t_{ij}+\theta_{\mathfrak f}(ad(t_{ij})))(t_{rs}+\theta_{\mathfrak f}(ad(t_{rs})):}\\
&+\sum_{(i,j)>(r,s)}
\frac{1}{4\sqrt{k_i+h_i}\sqrt{k_r+h_r}}{ (t_{ij},J(t_{rs})):(t_{ij}+\theta_{\mathfrak f}(ad(t_{ij})))(t_{rs}+\theta_{\mathfrak f}(ad(t_{rs})):}\\
&=\sum_{(i,j)<(r,s)}
\frac{1}{4\sqrt{k_i+h_i}\sqrt{k_r+h_r}}{ (t_{ij},J(t_{rs})):(t_{ij}+\theta_{\mathfrak f}(ad(t_{ij})))(t_{rs}+\theta_{\mathfrak f}(ad(t_{rs})):}\\
&-\sum_{(i,j)<(r,s)}
\frac{1}{4\sqrt{k_i+h_i}\sqrt{k_r+h_r}}{ (t_{ij},J(t_{rs})):(t_{rs}+\theta_{\mathfrak f}(ad(t_{rs})))(t_{ij}+\theta_{\mathfrak f}(ad(t_{ij})):}
\end{aligned}
$$
Since

\begin{align*}
&\int_{-\partial}^0[(t_{rs}+\theta_{\mathfrak f}(ad(t_{rs})))_\l(t_{ij}+\theta_{\mathfrak f}(ad(t_{ij}))]d\l\\&=\int_{-\partial}^0([t_{rs},t_{ij}]+\theta_{\mathfrak f}(ad(([t_{rs},t_{ij}])+\l(\mathbf k+\mathbf h^\vee-\mathbf h^\vee_{\mathfrak m})(t_{ij},t_{rs})\vac)d\l\\
&=\int_{-\partial}^0(k_i+h^\vee_i)\d_{i,r}\d_{j,s}\l\vac d\l=0,
\end{align*}

by \cite[(1.39)]{DK}, we find 
$$
\begin{aligned}:
&(t_{rs}+\theta_{\mathfrak f}(ad(t_{rs})))(t_{ij}+\theta_{\mathfrak f}(ad(t_{ij})):=:(t_{ij}+\theta_{\mathfrak f}(ad(t_{ij}))(t_{rs}+\theta_{\mathfrak f}(ad(t_{rs}))):
\end{aligned}
$$
hence 
$$
 \sum_{(i,j)\ne(r,s)}
\frac{1}{4\sqrt{k_i+h_i}\sqrt{k_r+h_r}}{ (t_{ij},J(t_{rs})):(t_{ij}+\theta_{\mathfrak f}(ad(t_{ij})))(t_{rs}+\theta_{\mathfrak f}(ad(t_{rs})):}=0.
$$
The outcome is that
  $$
  \begin{aligned}
 &[(\widetilde G_{\mathfrak m}-\widetilde G_{\mathfrak a})_\l J(\widetilde G_{\mathfrak m}-\widetilde G_{\mathfrak a})]\\
&=\sum_{i,j}
\frac{1}{4}(:\partial \overline t_{ij}\,\overline {J(t_{ij})}: +\la:\overline t_{ij}\,\overline {J(t_{ij})}: ).
\end{aligned}
 $$
Choose bases $\{t_i\}$, $\{t^i\}$ of $\mathfrak t^\pm$ as in the statement. Then we can write
 $$
 \sum_{i,j}:\overline t_{ij}\,\overline {J(t_{ij})}:=\sum_i(:\overline t_{i}\,\overline {J(t^{i})}:+:\overline t^{i}\,\overline {J(t_{i})}:)=-2\sqrt{-1}\sum_i:\overline t_{i}\,\overline {t^{i}}:.
 $$
 On the other hand
 $$
 \begin{aligned}
 \sum_{i,j}
&:\partial \overline t_{ij}\,\overline {J(t_{ij})}: =  \half\sum_{i,j}
:\partial \overline t_{ij}\,\overline {J(t_{ij})}: +\half \sum_{i,j}
:\partial \overline t_{ij}\,\overline {J(t_{ij})}:\\ 
&=  -\sqrt{-1}\sum_{i}(
:\partial \overline t_{i}\,\overline {t^{i}}: +:\overline {t_{i}}\partial \overline t^{i}: )= -\sqrt{-1}\sum_{i}\partial(:\overline {t_{i}}\overline t^{i}: ).
\end{aligned}
$$
The conclusion is that
$$
[(\widetilde G_{\mathfrak m}-\widetilde G_{\mathfrak a})_\l J(\widetilde G_{\mathfrak m}-\widetilde G_{\mathfrak a})]=\frac{\sqrt{-1}}{4}(\partial+2\l)\widetilde J_{\mathfrak t}
$$
where $\widetilde J_{\mathfrak t}=-\sum_i :\overline {t_{i}}\overline t^{i}: $

Proposition \ref{ParabisN=2} is now proved with $\aa$ as in \eqref{m=ait}, since $\widetilde J=\widetilde J_{\mathfrak m}+\widetilde J_{\mathfrak t}$.
\end{proof}

\section{The Joyce construction}\label{Joyce}
With the notation established in  Section \ref{setup},
we say that 
 $\mathbb M=\mathbb G/\mathbb A$ admits an invariant hypercomplex structure if there are two invariant complex structures on $\mathbb M$  such that, if  $J$ and $K$ are the corresponding integrable almost complex structures on $\mathfrak q$,  then $JK=-KJ$. 
 
Following \cite{BGP}, we now recall the Joyce construction. In \cite{Joyce} Joyce explains how to construct invariant hypercomplex
structures on suitable compact homogeneous spaces. His construction
can be translated to our setting as follows. Assume that $\g$ is reductive. Fix a
Cartan subalgebra $\h$ of $\g$ and denote by $\Delta$ the set of
corresponding roots. Using e.g. the Kostant's cascade \cite{Kostant}, we can find a sequence $\theta_1,\ldots,\theta_h$
of roots, where $\theta_1$ is the highest root of a simple ideal of $\g$. If $\mathfrak{s}_i \cong sl_2(\C)$ is the
subalgebra generated by the root spaces $\g_{\theta_i},
\g_{-\theta_i}$ and
\[
\mathfrak{f}_i:=\bigoplus_{(\alpha,\theta_i)\neq0,\a\ne\theta_i}\!\g_{\alpha}\;\,\,, \quad
\mathfrak{b}_i:=  \bigcap_{j=1}^i \,C_{\g}(\mathfrak{s}_j)\;\,,
\]
where $C_{\g}(\mathfrak{s}_j)$ denotes the centralizer of $\mathfrak{s}_j$ in $\g$, then one has the decomposition
\begin{equation}
\label{Jd}
\g=\mathfrak{b}_h \oplus \bigoplus_{i=1}^h \mathfrak{s}_i \oplus \bigoplus_{i=1}^h \mathfrak{f}_i\,.
\end{equation}

The Lie algebra  $\mathfrak{a}\subset\mathfrak{b}_h$ is chosen as
follows: the semisimple part of $\mathfrak{a}$ coincides with the semisimple
part of $\mathfrak{b}_h$ and the center $C_\mathfrak{a}$ of $\mathfrak{a}$ is a subspace of
the center $C'$ of $\mathfrak{b}_h$ such that $\dim C' -\dim C_\mathfrak{a}
-h \equiv 0\mod 4$.

 Introduce the Pauli matrices
\begin{equation}\label{sigma}\s_J=\begin{pmatrix}\sqrt{-1}&0\\0&-\sqrt{-1}\end{pmatrix},\ \s_K=\begin{pmatrix}0&1\\-1&0\end{pmatrix},\ 
\s_{JK}=\s_{J}\s_{K}=\begin{pmatrix}0&\sqrt{-1}\\\sqrt{-1}&0\end{pmatrix}.\end{equation}
Let $L_J$ and $L_K$ denote left multiplication on $gl(2)$  by the above matrices. The operators $L_J$, $L_K$, and $L_JL_K$  define a quaternionic  structure on  $gl(2)$. 
 Define  the following bilinear form on $gl(2)$:
\begin{equation}\label{invgl2}(A,A')= Tr(AA')-Tr(A)Tr(A').
\end{equation}
The bilinear form \eqref{invgl2} is the unique  invariant bilinear form  which restricts to the trace from on $sl(2)$ and is invariant w.r.t. left multiplication by $\s_J, \s_K$. We select orthogonal vectors $d_1,\ldots,d_h$ in $C'\cap \mathfrak q$ and choose isomorphisms $\phi_i:sl(2)\to \mathfrak s_i$ in such a way that $\phi_i$ is an isometry between the trace form on $sl(2)$ and $(\cdot,\cdot)_{|\mathfrak s_i\times  \mathfrak s_i}$. We also assume that $\phi_i(\s_J)\in\h$ and that $x_{\theta_i}:=\phi_i(\begin{pmatrix}0&1\\0&0\end{pmatrix})$ is a root vector for $\theta_i$. We can choose $d_i$ so that, extending $\phi_i$ to $gl(2)$ by setting $\phi_i(\begin{pmatrix}1&0\\0&1\end{pmatrix})=d_i$, we obtain an isomorphism $\phi_i:gl(2)\to \mathfrak q_i:= \C d_i\oplus \mathfrak s_i
 $ which  is an isometry between the form \eqref{invgl2} and $(\cdot,\cdot)_{|\mathfrak q_i\times  \mathfrak q_i}$.
 
 The orthocomplement  of $\aa$ in $\g$ is
\begin{equation}\label{q}
 \mathfrak q=C_{\mathfrak q}\oplus (\oplus_i \mathfrak q_i)\oplus (\oplus_i \mathfrak f_i)
\end{equation}
 where $C_{\mathfrak{q}}$ denotes the orthocomplement of $C_\mathfrak{a}\oplus
\sum_i \C d_i$ in $C'$.

Define  $J_i=\phi_i(\s_J)$ (resp.  $K_i=\phi_i(\s_K)$). The $\aa$-invariant hypercomplex structure on $\mathfrak q$ is obtained by setting
$$
J_{|\mathfrak{f}_i}=\ad(J_i),\  J_{|\mathfrak{q}_i}=\phi_iL_J\phi_i^{-1},\ K_{|\mathfrak{f}_i}=\ad(K_i),\  K_{|\mathfrak{q}_i}=\phi_iL_K\phi_i^{-1},
$$
 and $C_{\mathfrak{q}}$ is endowed with an arbitrary linear
hypercomplex structure that leaves $(\cdot,\cdot)$ invariant.

We are interested only in two special cases of the above construction which we now describe.
\begin{example} \label{Ex1} Consider $\mathbb G=U(2)$. 
 The Joyce construction reduces in this case to taking 
$\g=gl(2)$, $\aa=\{0\}$ and $h=1$, so that $\mathfrak q=\g$. Let $\h$ be the diagonal subalgebra of $\g$. We choose $\theta_1=\theta$ where $\theta$ denotes the $\h$-root of $\begin{pmatrix}0&1\\0&0\end{pmatrix}$. In this special case we have
$$\mathfrak s_1=sl(2),\ \mathfrak f_1=\{0\},\ \mathfrak b_1=C'=\C\begin{pmatrix}1&0\\0&1\end{pmatrix}.
$$ 
We choose $d_1=\begin{pmatrix}1&0\\0&1\end{pmatrix}$ and the invariant form given by \eqref{invgl2}, so that we can choose $\phi_1$ to be the identity. Hence
$J_1=\s_J$ and $K_1=\s_K$.
The almost complex structures $J$, $K$ and $JK$ on $\mathfrak q$  are given by left multiplication by the Pauli matrices in  \eqref{sigma}. 
\end{example}
\begin{example}\label{Ex2}{ Consider the case  $\mathbb G=SU(n)$, $n>2$, $h=1$.}
In this case $\g=sl(n)$. Let $\h$ be the diagonal subalgebra, $\mathfrak b$ the subalgebra of upper triangular matrices. If  $\D$ is the  set of $\h$-roots of $\g$, we let $\Dp$ be the positive set of roots corresponding to $\mathfrak b$. As an invariant form on $\g$, we choose the trace form.  We choose the root vectors $x_{\a}$, $\a\in\D$, to be the elementary matrices $E_{i,j}$, as usual.
We choose  $\theta_1=\theta$ where $\theta$ is  the highest root, so that, in Joyce construction, we have 
$
\mathfrak s_1=span(x_\theta,h_\theta, x_{-\theta})
$
and  $\mathfrak b_1$ is the centralizer in $\g$ of $\mathfrak s_1$. We  choose  $\aa=[\mathfrak b_1,\mathfrak b_1]$, so that $C_\aa=\{0\}$. Indeed the center $C'$ of $\mathfrak b_1$ is one-dimensional, hence $\dim C'-\dim C_\aa-1=0$.
The orthocomplement $\mathfrak q$ of $\aa$ in $\g$ is the set of matrices of the form 
$$
\begin{pmatrix}a_{11}&a_{12}&a_{13}&\cdots&a_{1n}\\
a_{21}&-\tfrac{a_{11}+a_{nn}}{n-2}&0&\cdots&a_{2n}\\
a_{31}&0&-\tfrac{a_{11}+a_{nn}}{n-2}&\cdots&a_{3n}\\
\vdots&\vdots&&\ddots&\vdots\\
a_{n1}&a_{n2}&\cdots&&a_{nn}
\end{pmatrix}.
$$
Next we fix the isomorphism $\phi_1:gl(2)\to sl(n)$:
$$
\phi_1(\begin{pmatrix}0&1\\0&0\end{pmatrix})=x_\theta,\ \phi_1(\begin{pmatrix}1&0\\0&-1\end{pmatrix})=h_\theta,\ \phi_1(\begin{pmatrix}0&0\\1&0\end{pmatrix})=x_{-\theta},\  
\phi_1(\begin{pmatrix}1&0\\0&1\end{pmatrix})=d_1,
$$
with 
$$
d_1=\sqrt{\tfrac{2-n}{n}}\begin{pmatrix}1&0&0\\
0&-\tfrac{2}{n-2}I_{n-2}&0\\
0&0&1
\end{pmatrix}.
$$
Then $\mathfrak q=\mathfrak s_1\oplus \C d_1\oplus \mathfrak f_1$ with $\mathfrak s_1=\phi_1(sl(2))$, 
$\mathfrak f_1$ is the set of matrices of the form 
$$
\begin{pmatrix}0&a_{12}&a_{13}&\cdots&0\\
a_{12}&0&0&\cdots&a_{2n}\\
a_{13}&0&0&\cdots&a_{3n}\\
\vdots&\vdots&&\ddots&\vdots\\
0&a_{n2}&a_{n3}&\cdots&0
\end{pmatrix},
$$
and the almost complex structures $J$, $K$, and $JK$ on $\mathfrak q$  are given by
$$
J(v)=[\phi_1(\s_J),v],\  v\in\mathfrak f_1,\quad J(v)=\phi_1(\s_J\phi_1^{-1}(v)),\ v\in\mathfrak s_1\oplus \C d_1,
$$
$$
K(v)=[\phi_1(\s_K),v],\ v\in\mathfrak f_1,\quad K(v)=\phi_1(\s_K\phi_1^{-1}(v)),\ v\in\mathfrak s_1\oplus \C d_1,
$$
$$
JK(v)=[\phi_1(\s_{JK}),v],\ v\in\mathfrak f_1,\quad JK(v)=\phi_1(\s_{JK}\phi_1^{-1}(v)),\ v\in\mathfrak s_1\oplus \C d_1.
$$
\end{example}

In both the special cases considered above set \begin{equation}\label{s}\mathfrak s=\phi_1(gl(2))=\mathfrak s_1\oplus \C d_1.\end{equation} Note that $\mathfrak q=\mathfrak s\oplus  \mathfrak f_1$ is an orthogonal decomposition and that both $\mathfrak s$ and $\mathfrak f_1$ are stable under the action of $ad(\phi_1(\s_J))$ and $J$ and similarly for $K$ and $JK$.

\section{The coset realisation of the $big\ N=4$ vertex algebra}
The $big\ N=4$  vertex algebra $V^{N=4}_{\tilde c,a}$ is the universal enveloping  vertex algebra of the Lie  conformal superalgebra generated by a Virasoro vector $\tilde L$ of central charge $\tilde c$, four (odd) free fermions
$\sigma^{--}$, $\sigma^{++}$, $\sigma^{+-}$, $\sigma^{-+}$ with
non-zero $\lambda$-brackets $[{\sigma^{--}}{}_{\lambda}
\sigma^{++}]=-\tfrac{\widetilde c}{6}\vac$, $[{\sigma^{+-}}{}_{\lambda}
\sigma^{-+}]=-\tfrac{\widetilde c}{6}\vac$, one (even) free boson $\xi$ with
$\lambda$-bracket $[\xi_{\lambda} \xi]=-\lambda \tfrac{\widetilde c}{6}$, and  primary even fields $\tilde J^+$, $\tilde J^0$, $\tilde J^-$, $\tilde J^{'+}$,  $\tilde J^{'0}$, $\tilde J^{'-}$  of conformal weight $1$,  primary odd fields $\tilde{G}^{++} $, $\tilde{G}^{+-} $, $\tilde{G}^{-+} $, $\tilde{G}^{--} $ of conformal weight $\tfrac{3}{2}$.  The $\lambda$-brackets for the pairs
$(\tilde{J},
\tilde{J}')$ are zero, and the non-zero $\lambda$-brackets for the pairs
$(\tilde{J}, \tilde{G})$, $(\tilde{J}', \tilde{G})$,
$(\tilde{J}, \tilde{J}) $, and $(\tilde{J}',\tilde{J}')$ are as follows:
\begin{equation}\label{prime}
\begin{aligned}
  [\tilde{J}^0 {}_{\lambda} \tilde{G}^{++}]
     &= \tilde{G}^{++}-\lambda a \sigma^{++}\, , \,
         [\tilde{J}^0 {}_{\lambda} \tilde{G}^{--}]
         = -\tilde{G}^{--}+\lambda \sigma^{--} \, , \,
         [\tilde{J}^0 {}_{\lambda} \tilde{G}^{-+}]
         = -\tilde{G}^{-+}+\lambda \sigma^{-+}\, , \\
{} [\tilde{J}^0 {}_{\lambda} \tilde{G}^{+-}]
     &= \tilde{G}^{+-}-\lambda a\sigma^{+-} \, , \,
         [\tilde{J}^+ {_{\lambda}} \tilde{G}^{--}]
         = \tilde{G}^{+-} -\lambda a \sigma^{+-}\, , \,
         [\tilde{J}^+ {}_{\lambda} \tilde{G}^{-+}]
         =-\tilde{G}^{++}+\lambda a \sigma^{++}\, , \, \\
{} [\tilde{J}^- {}_{\lambda} \tilde{G}^{++}]
     &= -\tilde{G}^{-+}+\lambda \sigma^{-+} \, , \,
         [\tilde{J}^- {}_{\lambda} \tilde{G}^{+-}]
         =\tilde{G}^{--}-\lambda \sigma^{--} \, , \, \\
{} [\tilde{J}^{\prime 0} {}_{\lambda} \tilde{G}^{++}]
     &= \tilde{G}^{++}+\lambda \sigma^{++} \, , \,
         [\tilde{J}^{\prime 0}{}_{\lambda} \tilde{G}^{--}]
         = -\tilde{G}^{--} -\tfrac{\lambda}{a}\sigma^{--}\, , \,
         [\tilde{J}^{\prime 0}{}_{\lambda} \tilde{G}^{-+}]
         = \tilde{G}^{-+}+\tfrac{\lambda}{a}\sigma^{-+}\, , \\
{}  [\tilde{J}^{\prime 0}{}_{\lambda}\tilde{G}^{+-}]
      &= -\tilde{G}^{+-}-\lambda \sigma^{+-}\, , \,
          [\tilde{J}^{\prime +}{}_{\lambda} \tilde{G}^{--}]
          = \tilde{G}^{-+}+\tfrac{\lambda}{a}\sigma^{-+}\, , \,
          [\tilde{J}^{\prime +} {}_{\lambda} \tilde{G}^{+-}]
          = -\tilde{G}^{++}-\lambda \sigma^{++}\, , \\
{}  [\tilde{J}^{\prime -}{}_{\lambda} \tilde{G}^{++}]
       &= -\tilde{G}^{+-}-\lambda \sigma^{+-}\, , \,
          [\tilde{J}^{\prime -}{}_{\lambda} \tilde{G}^{-{+}}]
          = \tilde{G}^{--}+\tfrac{\lambda}{a}\sigma^{--}\, ,\\
\end{aligned}\end{equation}
\begin{equation}\label{gnaturalhat}\begin{aligned}
{}  [\tilde{J}^0{}_{\lambda} \tilde{J}^0]
       &= \lambda \frac{\tilde{c}(a+1)}{3}\vac \, , \,
          [{{\tilde{J}^0}}{}_{\lambda} \tilde{J}^{\pm}]
          = \pm 2 \tilde{J}^{\pm} \, ,\,
          [{\tilde{J}^+}{}_{\lambda} \tilde{J}^-]
          = \tilde{J}^0 +\lambda \frac{\tilde{c} (a+1)}{6}\vac \, ,\\
          [{\tilde{J}^{\prime 0}}{}_{\lambda} \tilde{J}^{\prime 0}]
          &=\lambda \frac{\tilde{c}(a+1)}{3a}\vac\, ,\,
{} [{\tilde{J}^{\prime 0}}{}_{\lambda} \tilde{J}^{\prime \pm}]
       = \pm 2 \tilde{J}^{\prime \pm} \, , \,
          [{\tilde{J}^{\prime +}}{}_{\lambda} \tilde{J}^{\prime -}]
          = \tilde{J}^{\prime 0}+\lambda \frac{\tilde{c}(a+1)}{6a}\vac\,.
\end{aligned}\end{equation}
The non-zero $\lambda$-brackets for the pairs $(\tilde{J}
,\sigma)$ are as follows:
\begin{equation}\label{seconde}
\begin{aligned}
   [{\tilde{J}^0}{}_{\lambda} \sigma^{\pm s}] &=
        \pm \sigma^{\pm s} \, , \,
        [{\tilde{J}^{\prime 0}}{}_{\lambda} \sigma^{s \pm}] =
        \pm \sigma^{s \pm} \, , \\[1ex]
{}  [{\tilde{J}^+}{}_{\lambda} \sigma^{-\mp}] &=
        \pm a \sigma^{+ \mp} \, , \,
        [{\tilde{J}^-}{}_{\lambda} \sigma^{+\pm}]
        =\mp \tfrac{1}{a} \sigma^{- \pm} \, , \\[1ex]
{}  [{\tilde{J}^{\prime +}{}_{\lambda}} \sigma^{\mp-}] &=
        \pm \sigma^{\mp +} \, , \,
        [{\tilde{J}^{\prime -}}{}_{\lambda} \sigma^{\pm +}]
        = \mp \sigma^{\pm -}\, .
\end{aligned}\end{equation}
The non-zero $\lambda$-brackets for the pairs $(\tilde{G} ,
\sigma)$ and $(\tilde{G} , \xi)$ are as follows:
\begin{equation}\label{terze}
\begin{aligned}
     [{\tilde{G}^{\,+\pm}}{}_{\lambda} \sigma^{- \mp}] &=
         \frac{a}{2(a+1)}(\tilde{J}^0 \mp \tilde{J}^{\prime 0})
         + \left( \tfrac{a}{2}\right)^{1/2}\xi \, , \,
\\[1ex]
{}   [{\tilde{G}^{\,+\pm}}{}_{\lambda} \sigma^{- \pm}] &=
         \frac{a}{a+1} \tilde{J}^{\prime \pm} \, , \,
         [{\tilde{G}^{- \pm}}{}_{\lambda}\sigma^{+\mp}]
         = {\mp} \frac{1}{2(a+1)}
         (\tilde{J}^{\prime 0}{  \pm} \tilde{J}^0)+
         \left( \frac{1}{2a}\right)^{1/2} \xi \, , \\[1ex]
{}   [{\tilde{G}^{- \pm}}{}_{\lambda} \sigma^{+  \pm}] &=
         -\frac{1}{a+1} \tilde{J}^{\prime  \pm} \, , \,
         [{\tilde{G}^{- \mp}}{}_{\lambda}\sigma^{-\pm}]
         = \pm \frac{a}{a+1}\tilde{J}^- \, , \,
         [{\tilde{G}^{+ \mp}}{}_{\lambda} \sigma^{+ \pm}]
         ={ \pm}\frac{1}{a+1} \tilde{J}^+ \, , \\[1ex]
{}   [{\tilde{G}^{+ \pm}}{}_{\lambda}\xi] &= (\partial + \lambda)
        \left( \frac{a}{2}\right)^{1/2} \sigma^{+ \pm} \, , \,
        [{\tilde{G}^{- \mp}}{}_{\lambda} \xi] =(\partial + \lambda)
        \left( \frac{1}{2a}\right)^{1/2}\sigma^{- \mp} \, .
\end{aligned}\end{equation}
Finally, the non-zero $\lambda$-brackets between the $\tilde{G}$'s
are as follows:
\begin{equation}\label{quarte}
\begin{aligned}
      [{\tilde{G}^{\pm +}}{}_{\lambda}
          \tilde{G}^{\mp -}] &=
         \tilde{L} + \frac{1}{{ 2}(a+1)} (\partial +2\lambda)
         (\pm \tilde{J}^0 +a\tilde{J}^{\prime 0}) +
         \frac{\lambda^2}{6}\tilde{c}\vac\, , \\[1ex]
{}    [{\tilde{G}^{\pm s}}{}_{\lambda} \tilde{G}^{\mp s}] &=
         \mp \frac{a}{a+1} (\partial +2\lambda)
         \tilde{J}^{\prime s} \, , \,
         [{\tilde{G}^{s\pm}}{}_{\lambda} \tilde{G}^{s \mp}]
         = \mp \frac{1}{a+1}(\partial +2\lambda)\tilde{J}^{s}\,.
\end{aligned}\end{equation}
 The aim of this Section is to construct coset models for $V^{N=4}_{\tilde c,a}$ associated to the hypercomplex structures on $U(2)$ and $SU(n)$ described in Examples \ref{Ex1} and \ref{Ex2} respectively.
 
Since $\g=\g_0\oplus \g_1$ with $\g_0$ abelian or zero and $\g_1$ simple,   a symmetric invariant bilinear form 
 $\k$ on $\g$ can be written as $\k=k_0(\cdot,\cdot)_{|\g_0\times \g_0}+k_1(\cdot,\cdot)_{|\g_1\times \g_1}$. Recall that $\ov{\mathfrak q}$ denotes the space $\mathfrak q$ considered as a totally odd space.

\begin{proposition}\label{Apamdef} Fix  a symmetric invariant form 
 $\k$ on $\g$ such that $\k+\hvee$ is nondegenerate.
Introduce the map $A^\pm:sl(2)\to V^\k(\g)\otimes F(\bar \q)$ by extending linearly the mappings
$$
\begin{aligned}
&A^+:\s_J\mapsto J_1+ \Theta_{\mathfrak s}(ad(J_1)- J),\ A^+:\s_K\mapsto K_1+ \Theta_{\mathfrak s}(ad(K_1)- K),\\
& A^+:\s_{JK}\mapsto(JK)_1+ \Theta_{\mathfrak s}(ad((JK)_1)- JK),\\
&A^-:\s_J\mapsto \Theta_\q(J),\ A^-:\s_K\mapsto \Theta_\q(K),\ A^-:\s_{JK}\mapsto \Theta_\q(JK).
\end{aligned}
$$
Then the map 
$$
a\otimes\vac+\vac\otimes b\mapsto A^+(a)+A^-(b),\quad a,b\in sl(2)
$$
extends to define a vertex algebra homomorphism from $V^{k_1+1}(sl(2))\otimes V^{n-1}(sl(2))\to V^\k(\g)\otimes F(\bar\q)$.
\end{proposition}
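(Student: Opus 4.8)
The plan is to invoke the universal property of $V^{k_1+1}(sl(2))\otimes V^{n-1}(sl(2))$: each tensor factor is a universal affine vertex algebra, freely generated by its weight-one subspace, so it suffices to check that the proposed images satisfy the defining current-algebra $\l$-brackets on the basis $\{\s_J,\s_K,\s_{JK}\}$ of $sl(2)$. Concretely I must verify $[A^+(a)_\l A^+(b)]=A^+([a,b])+(k_1+1)\operatorname{Tr}(ab)\,\l\vac$, the analogous relation for $A^-$ with level $n-1$, and the decoupling $[A^+(a)_\l A^-(b)]=0$. Here I use that $\{\s_J,\s_K,\s_{JK}\}$ is a basis with $[\s_X,\s_Y]=2\s_Z$ for each cyclic triple $(X,Y,Z)$ of $(J,K,JK)$, and $\operatorname{Tr}(\s_X\s_Y)=-2\de_{XY}$, and that $V^\k(\g)$ carries the bracket $[x_\l y]=[x,y]+\l\,\k(x,y)\vac$ with $\k|_{\g_1}=k_1(\cdot,\cdot)$.

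The computational engine is an identification, through the isometry $\phi_1\colon gl(2)\to\mathfrak s$, of the operators entering $A^+$. Since $\mathfrak s$ and $\mathfrak f_1$ are stable under each $\ad(X_1)$ and each $X\in\{J,K,JK\}$, and since the formulas of Examples \ref{Ex1} and \ref{Ex2} give $X|_{\mathfrak f_1}=\ad(X_1)|_{\mathfrak f_1}$ and $X|_{\mathfrak s}=\phi_1 L_{\s_X}\phi_1^{-1}$, I obtain $(\ad(X_1)-X)|_{\mathfrak f_1}=0$ and $(\ad(X_1)-X)|_{\mathfrak s}=\phi_1(L_{\s_X}-R_{\s_X}-L_{\s_X})\phi_1^{-1}=-\phi_1 R_{\s_X}\phi_1^{-1}$, where $L,R$ denote left and right multiplication on $gl(2)$. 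Thus $\ad(X_1)-X$ is the extension by zero to $\q=\mathfrak s\oplus\mathfrak f_1$ of minus right multiplication by $\s_X$, whence $\Theta_{\mathfrak s}(\ad(X_1)-X)=\Theta_\q(\widehat{-R_{\s_X}})$. Everything else then rests on the elementary facts $[L_A,R_B]=0$, $R_AR_B=R_{BA}$, $\operatorname{Tr}_{gl(2)}(R_A)=2\operatorname{Tr}(A)$, and $\operatorname{Tr}_{gl(2)}(L_AR_B)=\operatorname{Tr}(A)\operatorname{Tr}(B)$.

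For the $A^+$ brackets I would split the $\l$-bracket along the two tensor factors $V^\k(\g)$ and $F(\ov\q)$; the mixed terms vanish because the two factors $\l$-commute. The $\g$-part contributes $[X_1,Y_1]+\l\,k_1\operatorname{Tr}(\s_X\s_Y)\vac$, and the fermionic part, via the homomorphism $\Theta_\q\colon V^1(so(\q))\to F(\ov\q)$, contributes $\Theta_\q([\widehat{-R_{\s_X}},\widehat{-R_{\s_Y}}])+\half\l\operatorname{Tr}_\q(R_{\s_X}R_{\s_Y})\vac$. Using $[R_{\s_X},R_{\s_Y}]=R_{\s_Y\s_X}-R_{\s_X\s_Y}=-R_{[\s_X,\s_Y]}=-2R_{\s_Z}$, the structure terms assemble into $A^+([\s_X,\s_Y])$, while the central terms combine (via $\operatorname{Tr}_{gl(2)}(R_{\s_X}R_{\s_Y})=2\operatorname{Tr}(\s_X\s_Y)$) to $(k_1+1)\operatorname{Tr}(\s_X\s_Y)\,\l\vac$, fixing the first level. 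The $A^-$ brackets are more direct: $[A^-(a)_\l A^-(b)]=\Theta_\q([X,Y])+\half\l\operatorname{Tr}_\q(XY)\vac$, where $JK=-KJ$ gives $[J,K]=2JK$ (hence the right structure constants), $\operatorname{Tr}_\q(JK)=0$ because $JK$ is again a complex structure, and $\operatorname{Tr}_\q(J^2)=-\dim\q$; the dimension count $\dim\q=\dim\g-\dim\aa=(n^2-1)-((n-2)^2-1)=4(n-1)$ then yields $-\half\dim\q=-2(n-1)=(n-1)\operatorname{Tr}(\s_J^2)$, fixing the second level at $n-1$. Finally, $[A^+(a)_\l A^-(b)]=0$: the $\g$-summand of $A^+(a)$ sits in the factor complementary to $A^-(b)$, while the fermionic parts give $\Theta_\q([\widehat{-R_{\s_X}},Y])+\half\l\operatorname{Tr}_\q(\widehat{-R_{\s_X}}\,Y)\vac$, and both vanish — trivially on $\mathfrak f_1$, and on $\mathfrak s$ because $[R_{\s_X},L_{\s_Y}]=0$ and $\operatorname{Tr}_{gl(2)}(R_{\s_X}L_{\s_Y})=\operatorname{Tr}(\s_X)\operatorname{Tr}(\s_Y)=0$.

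The \emph{hard part} will not be any isolated step but the consistent bookkeeping across the two tensor factors combined with the extension-by-zero passage from operators on $\mathfrak s$ to operators on $\q$. The conceptually decisive inputs are the identity $(\ad(X_1)-X)|_{\mathfrak s}=-\phi_1 R_{\s_X}\phi_1^{-1}$, which translates the whole problem into the algebra of left/right multiplications on $gl(2)$; the commuting of left and right multiplications, which is precisely what makes the two $sl(2)$ currents commute; and the dimension count $\dim\q=4(n-1)$, which pins down the level of the second copy.
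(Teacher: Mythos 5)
Your proposal is correct and follows essentially the same route as the paper: both reduce $A^+$ and $A^-$ to the operators of right and left multiplication on $gl(2)$ transported by $\phi_1$ (together with $\ad$ on $\mathfrak f_1$), and then verify the three families of $\lambda$-brackets by computing commutators and traces of these operators. The only difference is cosmetic: where the paper evaluates $\mathrm{Tr}(R_xL_y)$ and $\mathrm{Tr}(R_{a_i}R_{a_j})$ by summing over an orthonormal basis of $gl(2)$, you use the closed formulas $\mathrm{Tr}_{gl(2)}(L_AR_B)=\mathrm{Tr}(A)\mathrm{Tr}(B)$ and $\mathrm{Tr}_{gl(2)}(R_AR_B)=2\,\mathrm{Tr}(AB)$, which is marginally cleaner.
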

\begin{proof}
First of all, we write the map $A^+$ more explicitly:
indeed 
$$
A^+(\s_J)=J_1+\Theta_{\mathfrak s}(ad(J_1)-J)
$$ 
and, if $a\in gl(2)$, then
 $$
 (ad(J_1)-J)(\phi_1(a))=[\phi_1(\s_J),\phi_1(a)]-\phi_1(\s_Ja)=-\phi_1(a\s_J)
 $$
 and similarly for $A^+(K)$ and $A^+(JK)$.  Define $R_x\in so(\mathfrak s)$ by 
 \begin{equation}\label{Re}R_x(\phi_1(a))=-\phi_1(ax).
 \end{equation}
So, if $x\in sl(2)$,
 $$
 A^+(x)=\phi_1(x)+\Theta_{\mathfrak s}(R_x).
 $$
 Since
 $J_{|\mathfrak s}(\phi_1(a))=\phi_1(\s_Ja)$ and similarly for $K$ and $JK$, we see that
\begin{equation}\label{A-}
 A^-(x)=\Theta_{\mathfrak f_1}(ad(\phi_1(x))+\Theta_{\mathfrak s}(L_x),
\end{equation}
 where $L_x\in so(\mathfrak s)$ is defined by $L_x(\phi_1(a))=\phi_1(xa)$.
 
Since $\mathfrak f_1$ and $\mathfrak s$ are orthogonal, we have
 $$
 [A^+(x)_\la A^-(y)]=[\Theta_{\mathfrak s}(R_x)_\la \Theta_{\mathfrak s}(L_x)]=\Theta_{\mathfrak s}([R_x,L_x])+\half \la Tr(R_xL_y)=\half \la Tr(R_xL_y).
 $$
 
We claim that $Tr(R_xL_y) =0$. This will prove that 
$$
[A^+(x)_\l A^-(y)]=0,\ x,y\in sl(2).
$$

 Recall the form $(\cdot,\cdot)$ on $gl(2)$ defined in \eqref{invgl2}.
Consider the orthonormal basis $\{a_1,a_2,a_3,a_4\}$ of $gl(2)$ with
$$
a_1=\frac{\sqrt{-1}}{\sqrt{2}}\begin{pmatrix}1&0\\0&1\end{pmatrix},\ a_2=\frac{\sqrt{-1}}{\sqrt{2}}\s_J,\ a_3=\frac{\sqrt{-1}}{\sqrt{2}}\s_K,\ a_4=\frac{\sqrt{-1}}{\sqrt{2}}\s_{JK}. 
$$ 
Since $(\phi_1(a),\phi_1(b))=(a,b)$ for $a,b\in gl(2)$, 
the set $\{s_1,s_2,s_3,s_4\mid s_i=\phi_1(a_i)\}$ is an orthonormal basis of $\mathfrak s$. Then
 $$
  Tr(R_xL_y)=\sum_i(R_xL_y(s_i),s_i)=\sum_i(ya_ix,a_i)
  $$
  It is enough to check that $\sum_i(a_ra_ia_s,a_i) =0$ for all $r,s$ such that $r\ne1$ and $s\ne1$. Note that, if $r\ne1$, then  $a_ra_i=a_ia_r$ if $i=1,r$ and $a_ra_i=-a_ia_r$ if $i\ne1,r$ Thus
  $$
  \sum_{i}(a_ra_ia_s,a_i)= \!\! \sum_{i=1,r}(a_ia_ra_s,a_i)-\sum_{i\ne1,r}(a_ia_ra_s,a_i)=\sum_{i=1,r}(a_ra_s,a_i^{-1}a_i)-\sum_{i\ne1,r}(a_ra_s,a_i^{-1}a_i)=0.
  $$
  
We now check that 
$$
[A^\pm(x)_\l A^\pm(y)]=A^\pm([x,y])+k^\pm\la Tr(xy)\vac,\quad x,y\in sl(2).
$$
with $k^+=k_1+1$ and $k^-=n-1$.
For this it is enough to check that
\begin{equation}\label{AmAm}
[A^\pm(a_i)_\l A^\pm(a_j)]=A^\pm([a_i,a_j])+\d_{ij}k^\pm\la \vac,\quad 2\le i,j\le 4.
\end{equation}
Let $\psi:sl(2)\to so(\q,(\cdot,\cdot))$ be the Lie algebra homomorphism  obtained by linearly extending the mapping $\s_J\mapsto J$, $\s_K\mapsto K$, $\s_{JK}\mapsto JK$. Then 
$$
\begin{aligned}
[A^-(a_i)_\l A^-(a_j)]&=[\Theta_\q(\psi(a_i))_\l \Theta_\q(\psi(a_j))]=\Theta_\q([\psi(a_i),\psi(a_j)])+\half \la Tr(\psi(a_i)\psi(a_j))\vac\\
&=A^-([a_i,a_j])+\half \la Tr(\psi(a_i)\psi(a_j))\vac.
\end{aligned}
$$
If $i\ne j$ then  $\psi(a_i)\psi(a_j)=-\psi(a_j)\psi(a_i)$, so $Tr(\psi(a_i)\psi(a_j))=0$.
If $i= j$ then  $\psi(a_i)^2=\half I_\q$, so $Tr(\psi(a_i)^2)=\half \dim\q=2(n-1)$, so \eqref{AmAm} is checked for $A^-$.

It remains only to check \eqref{AmAm}  for $A^+$. Indeed,
$$
\begin{aligned}
&[A^+(a_i)_\l A^+(a_j)]=[(\phi_1(a_i)+\Theta_{\mathfrak s}(R_{a_i}))_\l (\phi_1(a_j)+\Theta_{\mathfrak s}(R_{a_j}))]\\
&=\phi_1([a_i,a_j])+\Theta_{\mathfrak s}([R_{a_i},R_{a_j}])+\la \k(\phi_1(a_i),\phi_1(a_i))+\half\la Tr(R_{a_i}R_{a_j})\\
&=\phi_1([a_i,a_j])+\Theta_{\mathfrak s}(R_{[a_i,a_j]})+\la k_1(\phi_1(a_i),\phi_1(a_i))+\half\la Tr(R_{a_i}R_{a_j})\\
&=A^+([a_i,a_j])+\la k_1 (\phi_1(a_i),\phi_1(a_i))+\half\la Tr(R_{a_i}R_{a_j}).
\end{aligned}
$$
Recall that $(\phi_1(x),\phi_1(y))=(x,y)$ so $(\phi_1(a_i),\phi_1(a_j))=\d_{ij}$. Since $R_{a_i}R_{a_j}=-R_{a_j}R_{a_i}$ when $i\ne j$ and $R_{a_i}^2=\half I_{\mathfrak s}$ we see that
$$
Tr(R_{a_i}R_{a_j})=\half\dim(\mathfrak s)\d_{ij}=2\d_{ij}
$$
hence \eqref{AmAm}  holds also for $A^+$.
\end{proof}
As in Section \ref{setup}, we can construct the field $\widetilde G$ and the twisted fields $\widetilde G_J$, $\widetilde G_K$ and $\widetilde G_{JK}$. Recall that these fields live in $V^{\mathbf k}(\g)\otimes F(\ov{\mathfrak q})$.
From Remark  \ref{GGJ}, we obtain
that
$$
\begin{aligned}
&[\widetilde G_\la \widetilde G_J]=\frac{\sqrt{-1}}{4}(\partial+2\la)\widetilde J,\\
&[\widetilde G_\la \widetilde G_K]=\frac{\sqrt{-1}}{4}(\partial+2\la)\widetilde K,\\
&[\widetilde G_\la \widetilde G_{JK}]=\frac{\sqrt{-1}}{4}(\partial+2\la)\widetilde{JK}.
\end{aligned}
$$
Recall from \eqref{betterJtilde} that
$$
\tilde J=\sqrt{-1}\Theta_\q(J)+2\widetilde \rho+ 2\Theta_\q(ad(\widetilde \rho)).
$$
In this special case
$$
\widetilde \rho=\frac{1}{2(k_1+n)}(h_\theta+\sum_{(\a,\theta)=1}h_\alpha).
$$
Note that $(\a,\theta)=1$ if and only if $(\theta-\a,\theta)=1$. This implies that 
$$
\sum_{(\a,\theta)=1}h_\alpha=(n-2)h_\theta
$$
so
\begin{equation}\label{rhotildeN4}
\widetilde \rho=\frac{n-1}{2(k_1+n)}h_\theta.
\end{equation}
Since, in our setting, $J_1=\phi_1(\s_J)=\sqrt{-1}h_\theta$, we have
$$
\tilde J=\sqrt{-1}\Theta_\q(J)-\sqrt{-1}\frac{n-1}{k_1+n}(J_1+ \Theta_\q(ad(J_1)),
$$
which we rewrite as
$$
\begin{aligned}
&\tilde J=\sqrt{-1}\Theta_\q(J)-\sqrt{-1}\frac{n-1}{k_1+n}(J_1+ \Theta_\q(ad(J_1)-\Theta_\q(J)+\Theta_\q(J))\\
&=\sqrt{-1}\frac{k_1+1}{k_1+n}\Theta_\q(J)-\sqrt{-1}\frac{n-1}{k_1+n}(J_1+ \Theta_\q(ad(J_1)-\Theta_\q(J)).
\end{aligned}
$$
Since $J_{|\mathfrak f_1}=ad(J_1)_{|\mathfrak f_1}$ we obtain
$$
 \Theta_\q(ad(J_1))-\Theta_\q(J)=\Theta_{\mathfrak s}(ad(J_1)-J),
 $$
so
\begin{equation}\label{widetildeJ}
\widetilde J=\sqrt{-1}\frac{k_1+1}{k_1+n}A^-(J)-\sqrt{-1}\frac{n-1}{k_1+n}A^+(J),
\end{equation}
and, similarly,
\begin{equation}\label{widetildeothers}
\begin{aligned}
&\widetilde K=\sqrt{-1}\frac{k_1+1}{k_1+n}A^-(K)-\sqrt{-1}\frac{n-1}{k_1+n}A^+(K),\\
&\widetilde {JK}=\sqrt{-1}\frac{k_1+1}{k_1+n}A^-(JK)-\sqrt{-1}\frac{n-1}{k_1+n}A^+(JK).\\
\end{aligned}
\end{equation}

Set
\begin{equation}\label{varie}
\begin{aligned}h=\begin{pmatrix}1&0\\0&-1\end{pmatrix},\ e&=\begin{pmatrix}0&1\\0&0\end{pmatrix},\ f=\begin{pmatrix}0&0\\1&0\end{pmatrix},\ h_1=\begin{pmatrix}1&0\\0&0\end{pmatrix},\ h_2=\begin{pmatrix}0&0\\0&1\end{pmatrix},
\\
\widetilde d_1&=\frac{1}{\sqrt{k_0}}(d_1)_{\g_0}+\frac{1}{\sqrt{k_1+n}}(d_1)_{\g_1},
\end{aligned}\end{equation}
and
 \begin{equation}\label{campi}
 \begin{aligned}
 &\widetilde J^0= A^+(h)
  &&\widetilde J^{'0}=-A^-(h)\\
  &\widetilde J^+ =-A^+(e)
    &&\widetilde J^{'+}=-A^-(f)\\
  & \widetilde J^-=-A^+(f)
  && \widetilde J^{'-}=-A^-(e)\\
  &\widetilde G^{++}=\widetilde G+\sqrt{-1}\widetilde G_J
  &&\widetilde G^{--}=\widetilde G-\sqrt{-1}\widetilde G_J\\
   &\widetilde G^{-+}=\widetilde G_K+\sqrt{-1}\widetilde G_{JK}
  &&\widetilde G^{+-}=\widetilde G_K-\sqrt{-1}\widetilde G_{JK}\\
  &\si^{++}=\tfrac{n-1}{\sqrt{k_1+n}}\ov{\phi_1( h_2)}
  &&\si^{--}=\tfrac{k_1+1}{\sqrt{k_1+n}}\ov{\phi_1( h_1)}
\\
    &\si^{+-}=\tfrac{n-1}{\sqrt{k_1+n}}\ov{\phi_1(e)}
  &&\si^{-+}=-\tfrac{k_1+1}{\sqrt{k_1+n}}\ov{\phi_1( f)}\\
  &\xi=\tfrac{\sqrt{(k_1+1)(n-1)}}{\sqrt{2(k_1+n)}}\left(\widetilde d_1+\Theta_\q(ad(\widetilde d_1))\right).
   \end{aligned}
   \end{equation}
\begin{proposition}\label{RP} The fields \eqref{campi} together with $L$ verify relations \eqref{prime}, \eqref{seconde}, \eqref{terze}, \eqref{quarte} with central charge $\tilde c$ equal to 
$C$ given by \eqref{centralCharge} and parameter $a=\frac{k_1+1}{n-1}$.
\end{proposition}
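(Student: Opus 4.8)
The plan is to verify the four families of $\lambda$-brackets one type at a time, using the two commuting $sl(2)$-currents of Proposition~\ref{Apamdef} and the two $N=2$ structures attached to $J$ and $K$. I first record what Proposition~\ref{Apamdef} gives: the unprimed currents $\widetilde J^0,\widetilde J^\pm$ are, up to sign, the $A^+$-images of a standard basis $h,e,f$ of $sl(2)$ at level $k_1+1$, while the primed currents $\widetilde J^{\prime 0},\widetilde J^{\prime\pm}$ are the $A^-$-images at level $n-1$. Since $[A^+(x)_\lambda A^-(y)]=0$, all mixed $(\widetilde J,\widetilde J')$ brackets vanish, and the affine relations \eqref{gnaturalhat} hold; comparing their central terms with the levels forces $a=\tfrac{k_1+1}{n-1}$ and pins the value of $\tilde c$. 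The equality $\tilde c=C$ is then a single scalar verification of \eqref{centralCharge} for the pair $(\g,\aa)$ of Example~\ref{Ex2} (Example~\ref{Ex1} being the degenerate instance $\mathfrak f_1=0$), performed once and reused in every $\tfrac{\lambda^2}{6}\tilde c$ term below.

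Next I treat the two diagonal brackets $[\widetilde G^{\pm+}{}_\lambda\widetilde G^{\mp-}]$ in \eqref{quarte}. By \eqref{campi} the pair $(\widetilde G^{++},\widetilde G^{--})=(\widetilde G\pm\sqrt{-1}\widetilde G_J)$ is precisely the $N=2$ pair of Section~\ref{cosetN=2} for $J$, so Proposition~\ref{ParabisN=2} and Remark~\ref{GGJ} give $[\widetilde G^{++}{}_\lambda\widetilde G^{--}]=L+(\tfrac12\partial+\lambda)\widetilde J+\tfrac{\lambda^2}{6}C\vac$; with \eqref{widetildeJ} and $\sigma_J=\sqrt{-1}h$ one rewrites $\widetilde J=\tfrac{n-1}{k_1+n}\widetilde J^0+\tfrac{k_1+1}{k_1+n}\widetilde J^{\prime 0}=\tfrac{1}{a+1}(\widetilde J^0+a\widetilde J^{\prime 0})$, which is the coefficient demanded by \eqref{quarte}. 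The second diagonal bracket is obtained by applying the quaternionic automorphism $K$: since $K(\widetilde G_J)=-\widetilde G_{JK}$ (because $KJ=-JK$ and $K$ fixes $V^\k(\g)$), one has $K(\widetilde G^{++},\widetilde G^{--})=(\widetilde G^{+-},\widetilde G^{-+})$, and applying $K$ to the relation above yields $[\widetilde G^{-+}{}_\lambda\widetilde G^{+-}]=L-(\tfrac12\partial+\lambda)K(\widetilde J)+\tfrac{\lambda^2}{6}C\vac$. Using $K(\Theta_\q(J))=\Theta_\q(KJK^{-1})=-\Theta_\q(J)$ and the left/right-multiplication description of $\mathfrak s$ in Example~\ref{Ex2}, one checks that $K$ fixes $\widetilde J^0$ and negates $\widetilde J^{\prime 0}$, so $-K(\widetilde J)=\tfrac{1}{a+1}(-\widetilde J^0+a\widetilde J^{\prime 0})$, matching \eqref{quarte}. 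The weight $\tfrac32$ and primarity of all four $\widetilde G^{\pm\pm}$ under $L$ come from \eqref{NSGJ}.

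For the off-diagonal brackets of \eqref{quarte} I reduce every pairwise bracket of $\{\widetilde G,\widetilde G_J,\widetilde G_K,\widetilde G_{JK}\}$ to the three fundamental ones $[\widetilde G_\lambda\widetilde G_J]=\tfrac{\sqrt{-1}}{4}(\partial+2\lambda)\widetilde J$, $[\widetilde G_\lambda\widetilde G_K]=\tfrac{\sqrt{-1}}{4}(\partial+2\lambda)\widetilde K$, $[\widetilde G_\lambda\widetilde G_{JK}]=\tfrac{\sqrt{-1}}{4}(\partial+2\lambda)\widetilde{JK}$ (and $[\widetilde G_\lambda\widetilde G]=\tfrac12 L+\tfrac{\lambda^2}{12}C\vac$ together with its $J,K,JK$-twists), using the quaternionic automorphisms and $\lambda$-bracket skewsymmetry to rewrite brackets such as $[\widetilde G_J{}_\lambda\widetilde G_K]$ in terms of these. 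Expanding the combinations \eqref{campi} then expresses each off-diagonal bracket through $\widetilde J,\widetilde K,\widetilde{JK}$, and \eqref{widetildeJ}, \eqref{widetildeothers} convert these currents into the raising/lowering basis $\widetilde J^\pm,\widetilde J^{\prime\pm}$ (since $\sigma_K=e-f$ and $\sigma_{JK}=\sqrt{-1}(e+f)$), recovering the coefficients $\mp\tfrac{a}{a+1}$ and $\mp\tfrac{1}{a+1}$ of \eqref{quarte}.

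It remains to compute the mixed brackets with the fermions $\sigma$ and the boson $\xi$. The relations \eqref{seconde} are immediate from the fermionic action $[\Theta_V(T)_\lambda\ov v]=\ov{Tv}$: the $\g$-part $\phi_1(x)$ of $A^+(x)$ commutes with every $\sigma=\ov{\phi_1(\cdot)}$, while the $\Theta$-parts of $A^\pm(x)$ act through $R_x,L_x$ (respectively $\psi(x)$), giving the stated signs. The \emph{genuinely laborious} steps are \eqref{prime} and \eqref{terze}, which are the main obstacle. For \eqref{prime} I compute $[A^\pm(x)_\lambda\widetilde G_?]$ from $A^+(x)=\phi_1(x)+\Theta_{\mathfrak s}(R_x)$ and $A^-(x)=\Theta_\q(\psi(x))$: the current part is handled by the cubic Dirac identities \eqref{GU}, and the fermionic part acts as the infinitesimal rotation (zero mode of $\Theta_\q$) permuting $\widetilde G,\widetilde G_J,\widetilde G_K,\widetilde G_{JK}$, the $\lambda$-linear anomaly coming from the fermionic content of $\widetilde G$ being exactly the $\sigma$-corrections, with coefficients $a,\tfrac1a$ dictated by the level ratio $\tfrac{k_1+1}{n-1}$. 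For \eqref{terze} I bracket $\widetilde G,\widetilde G_J,\widetilde G_K,\widetilde G_{JK}$ against the single fermions $\sigma=\ov{\phi_1(\cdot)}$ and against $\xi\propto\widetilde d_1+\Theta_\q(\ad(\widetilde d_1))$, using \eqref{f1} and \eqref{ffff3} and the orthogonal, $J,K,JK$-stable decomposition $\q=\mathfrak s\oplus\mathfrak f_1$ of Section~\ref{Joyce}. The real difficulty throughout \eqref{prime} and \eqref{terze} is bookkeeping: correctly tracking the signs forced by $JK=-KJ$, the normalizations of $\sigma,\xi$ in \eqref{campi}, and the level-dependent factors $a,\tfrac1a$, so that the computed brackets land on the exact coefficients listed.
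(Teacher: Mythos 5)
Your proposal is correct and follows essentially the same route as the paper: the affine relations via Proposition \ref{Apamdef}, the diagonal $[\widetilde G^{\pm+}{}_\lambda\widetilde G^{\mp-}]$ via Proposition \ref{ParabisN=2} and the rewriting $\widetilde J=\tfrac{1}{a+1}(\widetilde J^0+a\widetilde J^{\prime 0})$, the remaining $G$-brackets via the quaternionic automorphisms and Remark \ref{GGJ}, and \eqref{prime} via the key identity $[\Theta_\q(J)_\lambda\widetilde G]=\widetilde G_J+\lambda\tfrac{n-1}{2\sqrt{k_1+n}}\ov{J_1}$, which is exactly the paper's \eqref{ThetaJlQ}. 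The only cosmetic divergence is that the paper obtains $[\widetilde G^{\pm\pm}{}_\lambda\xi]$ from a Jacobi-identity manipulation of $[\widetilde G^{++}{}_\lambda[\widetilde G^{++}{}_\mu\sigma^{--}]]$ rather than by direct expansion, but both are routine once \eqref{terze}'s $\sigma$-brackets are in place.
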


 \begin{proof}In this special case formula \eqref{centralCharge} gives 
  \begin{equation}\label{centralcharge}
C=\frac{6 (k_1+1) (n-1)}{k_1+n}, 
\end{equation}
so, setting $\tilde c=C$ and $a=\frac{k_1+1}{n-1}$, we find
$$
\frac{\tilde c(a+1)}{6}=k_1+1,\quad \frac{\tilde c(a+1)}{6a}=n-1,
$$
hence relations \eqref{gnaturalhat} hold by Proposition \ref{Apamdef}.

We now tackle relations \eqref{quarte}. By Proposition \ref{ParabisN=2} and \eqref{widetildeJ},
 $$
 \begin{aligned}
 [(\widetilde G^{++})_\l \widetilde G^{--}]&=L + (\tfrac{1}{2}\partial+\lambda)\widetilde J+\tfrac{\lambda^2}{6}C \vac=\\
&=L + \tfrac{n-1}{2(k_1+n)}(\partial+2\lambda)
(\widetilde J^0+\tfrac{k_1+1}{n-1}\widetilde J^{'0})+\tfrac{\lambda^2}{6}C\vac\\
&=L + \tfrac{1}{2(a+1)}(\partial+2\lambda)
(\widetilde J^0+a\widetilde J^{'0})+\tfrac{\lambda^2}{6}\tilde c \vac.
\end{aligned}
$$

Since $\widetilde G^{-+}=K\widetilde G^{--}$ and $\widetilde G^{+-}=K\widetilde G^{++}$, we obtain 
$$
\begin{aligned}
[(\widetilde G^{-+})_\l \widetilde G^{+-}]&=[K\widetilde G^{--}_\l K\widetilde G^{++}]=K[(\widetilde G^{--})_{\l} \widetilde G^{++}]=K[(\widetilde G^{++})_{-\l-\partial} \widetilde G^{--}]\\
&=K(L-(\half\partial+\lambda)\widetilde J+\tfrac{\lambda^2}{6}C \vac)=L-(\half\partial+\lambda)K(\widetilde J)+\tfrac{\lambda^2}{6}C \vac).
\end{aligned}
$$

Since $KJ=-JK$, it follows that $K(\Theta_\q(J))=-\Theta_\q(J)$ and, since $L_KR_J=R_JL_K$, $K(\Theta_{\mathfrak s}(R_J))=\Theta_{\mathfrak s}(R_J)$. This implies
$
K(A^+(J))=A^+(J)$ and  $K(A^-(J))=-A^-(J)$,
hence 
\begin{equation}\label{KofJtilde}K(\widetilde J)=-\sqrt{-1}\frac{k_1+1}{k_1+n}A^-(J)-\sqrt{-1}\frac{n-1}{k_1+n}A^+(J)=\frac{n-1}{k_1+n}\widetilde J^{0}-\frac{k_1+1}{k_1+n}\widetilde J^{'0} 
\end{equation}
so
$$
\begin{aligned}
[(\widetilde G^{-+})_\l \widetilde G^{+-}]&=L +\tfrac{n-1}{2(k_1+n)}(\partial+2\lambda)
(-\widetilde J^0+\tfrac{k_1+1}{n-1} \widetilde J^{'0})+\tfrac{\lambda^2}{6}C \vac\\&=L +\tfrac{1}{2(a+1)}(\partial+2\lambda)
(-\widetilde J^0+a \widetilde J^{'0})+\tfrac{\lambda^2}{6}\tilde c \vac.
\end{aligned}
$$
Using Remark \ref{GGJ}, we find
$$
\begin{aligned}
[(\widetilde G^{++})_\l\widetilde G^{-+}]&=[(\widetilde G +\sqrt{-1}\widetilde G_{J})_\l (\widetilde G_K+\sqrt{-1}\widetilde G_{JK})]\\
&=[\widetilde G_\l \widetilde G_K]-J[\widetilde G_\l \widetilde G_K]+\sqrt{-1}(-J[\widetilde G_\l \widetilde G_{JK}]+[\widetilde G_\l \widetilde G_{JK}]))\\
 &=\tfrac{\sqrt{-1}}{4}(\partial+2\l)(\widetilde K-J(\widetilde K)) -\tfrac{1}{4}(\partial+2\l)(\widetilde{JK}-J(\widetilde{JK}).
\end{aligned}
$$
Similarly to \eqref{KofJtilde}, we have
\begin{equation}\label{JofKtilde}
\begin{aligned}
J(\widetilde K)&=-\sqrt{-1}\frac{k_1+1}{k_1+n}A^-(K)-\sqrt{-1}\frac{n-1}{k_1+n}A^+(K)\\
J(\widetilde{JK})&=-\sqrt{-1}\frac{k_1+1}{k_1+n}A^-(JK)-\sqrt{-1}\frac{n-1}{k_1+n}A^+(JK).
\end{aligned}
\end{equation}
so $\widetilde K-J(\widetilde K)=2\sqrt{-1}\frac{k_1+1}{k_1+n}A^-(K)$ and $\widetilde{JK}-J(\widetilde{JK})=2\sqrt{-1}\frac{k_1+1}{k_1+n}A^-(JK)$.
Then
  $$
   \begin{aligned}
    [(\widetilde G^{++})_\l\widetilde G^{-+}]  &=-\tfrac{1}{2}\frac{k_1+1}{k_1+n}(\partial+2\l)(A^-(K +\sqrt{-1}JK))=\frac{k_1+1}{k_1+n}(\partial+2\l)(A^-(f))\\
    &=-\frac{a}{a+1}(\partial+2\l)\widetilde J^{'+}.
\end{aligned}
 $$
Analogously, we find
$$
\begin{aligned}
[(\widetilde G^{+-})_\l\widetilde G^{--}]&=[(\widetilde G_K) -\sqrt{-1}\widetilde G_{JK})_\l (\widetilde G-\sqrt{-1}\widetilde G_{J})]\\
&=[(\widetilde G_K)_\l \widetilde G]-J[(\widetilde G_K)_\l \widetilde G_J]-\sqrt{-1}(-J[(\widetilde G_{JK})_\l \widetilde G]+[(\widetilde G_{JK})_\l \widetilde G]))\\
 &=-\tfrac{\sqrt{-1}}{4}(\partial+2\l)(\widetilde K-J(\widetilde K)) -\tfrac{1}{4}(\partial+2\l)(\widetilde{JK}-J(\widetilde{JK})\\
 &=-\tfrac{1}{2}\frac{k_1+1}{k_1+n}(\partial+2\l)(A^-(-K +\sqrt{-1}JK))=\frac{k_1+1}{k_1+n}(\partial+2\l)(A^-(e))\\
 &=-\frac{a}{a+1}(\partial+2\l)\widetilde J^{'-}.
\end{aligned}
$$
Skewsymmetry now implies 
$$
[(\widetilde G^{-+})_\l\widetilde G^{++}]=[(\widetilde G^{++})_{-\l-\partial}\widetilde G^{-+}]=\frac{a}{a+1}(\partial+2\l)\widetilde J^{'+}
$$
and
$$
[(\widetilde G^{--})_\l\widetilde G^{+-}]=[(\widetilde G^{+-})_{-\l-\partial}\widetilde G^{--}]=\frac{a}{a+1}(\partial+2\l)\widetilde J^{'-}.
$$
Next, we check 
$$
\begin{aligned}
[(\widetilde G^{-+})_\l\widetilde G^{--}]&=[(\widetilde G_K +\sqrt{-1}\widetilde G_{JK})_\l (\widetilde G-\sqrt{-1}\widetilde G_{J})]\\
&=[(\widetilde G_K)_\l \widetilde G]+J[(\widetilde G_K)_\l \widetilde G]+\sqrt{-1}([(\widetilde G_{JK})_\l \widetilde G]+J[(\widetilde G_{JK})_\l \widetilde G]))\\
 &=-\tfrac{\sqrt{-1}}{4}(\partial+2\l)(\widetilde K+J(\widetilde K)) +\tfrac{1}{4}(\partial+2\l)(\widetilde{JK}+J(\widetilde{JK}))\\
 &=\frac{n-1}{k_1+n}(\partial+2\l)(-\tfrac{1}{2}A^+(K) -\tfrac{\sqrt{-1}}{2}A^+(JK))=\frac{n-1}{k_1+n}(\partial+2\l)A^+(f)\\
 &=-\frac{1}{a+1}(\partial+2\l)\widetilde J^-
\end{aligned}
$$
and
$$
\begin{aligned}
[(\widetilde G^{++})_\l\widetilde G^{+-}]&=[(\widetilde G +\sqrt{-1}\widetilde G_{J})_\l (\widetilde G_K-\sqrt{-1}\widetilde G_{JK})]\\
&=[\widetilde G_\l \widetilde G_K]+J[\widetilde G_\l \widetilde G_K]-\sqrt{-1}([\widetilde G_\l \widetilde G_{JK}]+J[\widetilde G_\l \widetilde G_{JK}])\\
 &=\tfrac{\sqrt{-1}}{4}(\partial+2\l)(\widetilde K+J(\widetilde K)) +\tfrac{1}{4}(\partial+2\l)(\widetilde{JK}+J(\widetilde{JK}))\\
 &=\frac{n-1}{k_1+n}(\partial+2\l)(\tfrac{1}{2}A^+(K) -\tfrac{\sqrt{-1}}{2}A^+(JK))=\frac{n-1}{k_1+n}(\partial+2\l)A^+(e)\\
 &=-\frac{1}{a+1}(\partial+2\l)\widetilde J^+.
\end{aligned}
$$
Again, skewsymmetry implies 
$$
[(\widetilde G^{--})_\l\widetilde G^{-+}]=\frac{1}{a+1}(\partial+2\l)\widetilde J^{-},\quad
[(\widetilde G^{+-})_\l\widetilde G^{++}]=\frac{1}{a+1}(\partial+2\l)\widetilde J^{+}.
$$
The relations
$$
[\widetilde G^{++}{}_\la \widetilde G^{++}]=[\widetilde G^{--}{}_\la \widetilde G^{--}]=[\widetilde G^{+-}{}_\la \widetilde G^{+-}]=[\widetilde G^{-+}{}_\la \widetilde G^{-+}]=0
$$
follow from  \eqref{gpgpfinale} and \eqref{gpgmfinale}.
We are now done with relations \eqref{quarte}.

Relations \eqref{prime} require some preparation. Using \eqref{fff3} and \eqref{ff3}, one computes that, in general, if $T\in so(\q,(\cdot,\cdot))$,
$$
\begin{aligned}
[\Theta_\q(T)_\la  \widetilde G]&=\sum_{s,j}\frac{1}{2\sqrt{k_s+h_s}}:q_{sj}\ov{T(q^{sj})}:\\
&-\frac{1}{6}\sum_{s,i,j}\frac{1}{2\sqrt{k_s+h_s}}:\left(\overline{T([q_{si},q_{sj}]_\q)-[q_{si},T(q_{sj})]_\q-[T(q_{si}),q_{sj}]_\q}\right)\ov q^{si}\ov q^{sj}:\\
&-\half \la\sum_{s,i}\frac{1}{2\sqrt{k_s+h_s}}\ov{[q_{si},T(q^{si})]}_\q.
\end{aligned}
$$
We apply this formula to our special case with  $T=J$. Since, by definition, 
\begin{equation}\label{Jext}
J(a_\q)=J(a)=J(a)_\q,\quad a\in\g,
\end{equation}
 we can write
$$
\begin{aligned}
[\Theta_\q(J)_\la  \widetilde G]&=\sum_{s,j}\frac{1}{2\sqrt{k_s+h_s}}:q_{sj}\ov{J(q^{sj})}:\\
&-\frac{1}{6}\sum_{s,i,j}\frac{1}{2\sqrt{k_s+h_s}}:\left(\overline{J([q_{si},q_{sj}])-[q_{si},J(q_{sj})]-[J(q_{si}),q_{sj}]}\right)_{\mathfrak q}\ov q^{si}\ov q^{sj}:\\
&-\half \la\sum_{s,i}\frac{1}{2\sqrt{k_s+h_s}}\ov{[q_{si},J(q^{si})]}_\q.
\end{aligned}
$$
and, recalling \eqref{NT}, we  obtain
$$
\begin{aligned}
[\Theta_\q(J)_\la  \widetilde G]&=\sum_{s,j}\frac{1}{2\sqrt{k_s+h_s}}:q_{sj}\ov{J(q^{sj})}:\\
&-\frac{1}{6}\sum_{s,i,j}\frac{1}{2\sqrt{k_s+h_s}}:\overline{\left(J([J(q_{si}),J(q_{sj})])\right)_{\q}}\ov q^{si}\ov q^{sj}:
-\half \la\sum_{s,i}\frac{1}{2\sqrt{k_s+h_s}}\ov{[q_{si},J(q^{si})]}_\q.
\end{aligned}
$$
 Combining  \eqref{rotilde} and \eqref{rhotildeN4}, we find that, in our special cases,
$$
\sum_{s,i}\frac{1}{2\sqrt{k_s+h_s}}\ov{[q_{si},J(q^{si})]}_\q=-\sqrt{-1}\frac{(n-1)}{\sqrt{k_1+n}}h_\theta,
$$
so, applying \eqref{Jext} again,
$$
\begin{aligned}
[\Theta_\q(J)_\la  \widetilde G]&=\sum_{s,j}\frac{1}{2\sqrt{k_s+h_s}}:q_{sj}\ov{J(q^{sj})}:\\
&-\frac{1}{6}\sum_{s,i,j}\frac{1}{2\sqrt{k_s+h_s}}:\overline{\left(J([J(u_{si}),J(u_{sj})]_{\q})\right)}\ov u^{si}\ov u^{sj}:
+\la\sqrt{-1}\frac{(n-1)}{2\sqrt{k_1+n}}\ov {(h_\theta)_\q}.
\end{aligned}
$$
with $\{u_{si}\}$ a basis of $(\mathfrak n_J^+\oplus \mathfrak n_J^-)\cap \g_s$ and $\{u^{si}\}$ its dual basis. Since $J$ stabilizes $(\mathfrak n_J^+\oplus \mathfrak n_J^-)\cap \g_s$ and since $\sqrt{-1}(h_\theta)_\q=J_1$ we can rewrite the latter formula as
$$
\begin{aligned}
[\Theta_\q(J)_\la  \widetilde G]&=\sum_{s,j}\frac{1}{2\sqrt{k_s+h_s}}:q_{sj}\ov{J(q^{sj})}:\\
&-\frac{1}{6}\sum_{s,i,j}\frac{1}{2\sqrt{k_s+h_s}}:\overline{\left(J([u_{si},u_{sj}]_{\q})\right)}\ov{J( u^{si})}\ov{J( u^{sj})}:
+\la\frac{(n-1)}{2\sqrt{k_1+n}}\ov {J_1}.
\end{aligned}
$$
which ultimately reads
\begin{equation}\label{ThetaJlQ}
[\Theta_\q(J)_\la  \widetilde G]=\widetilde G_J+\la\frac{(n-1)}{2\sqrt{k_1+n}}\ov {J_1}.
\end{equation}
Likewise,
\begin{align}\label{ThetaKlQ}
[\Theta_\q(K)_\la  \widetilde G]&=\widetilde G_K+\la\frac{(n-1)}{2\sqrt{k_1+n}}\ov {K_1},\\
[\Theta_\q(JK)_\la  \widetilde G]&=\widetilde G_{JK}+\la\frac{(n-1)}{2\sqrt{k_1+n}}\ov {JK_1}.
\end{align}
  
 In the subsequent calculations, we will use repeatedly the fact that, if $g\in SO(\q,(\cdot,\cdot))$ and $T\in so(\q,(\cdot,\cdot))$, then 
$g(\Theta_\q(T))=\Theta_\q(gTg^{-1})$.
We are now ready to check relations \eqref{prime}: 
$$
 \begin{aligned}[(\widetilde{J}^{'0})_\l \widetilde G^{++}]&=- [A^-(h)_\l  (\widetilde G+\sqrt{-1}\widetilde G_J] =\sqrt{-1}[\Theta_\q(J)_\l  (\widetilde G+\sqrt{-1}\widetilde G_J] \\
 &=\sqrt{-1}\left( \widetilde G_J+\la\frac{n-1}{2\sqrt{k_1+n}}\ov {J_1}+\sqrt{-1}[\Theta_\q(J)_\l \widetilde G_J]\right)\\
 &=\sqrt{-1}\left( \widetilde G_J+\la\frac{n-1}{2\sqrt{k_1+n}}\ov {J_1}+\sqrt{-1}J[\Theta_\q(J)_\l \widetilde G]\right)\\
 &=\sqrt{-1}\left( \widetilde G_J+\la\frac{n-1}{2\sqrt{k_1+n}}\ov {J_1}+\sqrt{-1}J(\widetilde G_J+\la\frac{n-1}{2\sqrt{k_1+n}}\ov {J_1})\right)
 \\
 &=\sqrt{-1}\left( \widetilde G_J-\sqrt{-1}\widetilde G+\la\frac{n-1}{2\sqrt{k_1+n}}(\ov {\phi_1(\s_J)}+\sqrt{-1}\ov{J(\phi_1(\s_J))}\right)
  \\
   &=\sqrt{-1}\left( \widetilde G_J-\sqrt{-1}\widetilde G+\la\frac{n-1}{2\sqrt{k_1+n}}(\ov {\phi_1(\s_J)+\sqrt{-1}\s^2_J))}\right)
  \\
 &=\widetilde G^{++}+\la\tfrac{n-1}{\sqrt{k_1+n}}\ov{\phi_1(h_2)}=\widetilde G^{++}+\la\s^{++}.
 \end{aligned}
   $$
Similarly
$$
   \begin{aligned} [\tilde{J}^{\prime 0}{}_{\lambda} \tilde{G}^{--}]&= [\tilde{J}^{\prime 0}{}_{\lambda}(\widetilde G-\sqrt{-1}\widetilde G_{J})]  \\
 &=\sqrt{-1}\left( \widetilde G_J+\sqrt{-1}\widetilde G+\la\tfrac{n-1}{2\sqrt{k_1+n}}(\ov J_1-\sqrt{-1}\ov{J(\phi_1(\s_J))})\right)\\
 &=-\widetilde G^{--}-\la\tfrac{n-1}{\sqrt{k_1+n}}\ov{\phi_1(h_1)}=-\widetilde G^{--}-\tfrac{\la}{a}\s^{--},
 \end{aligned}
   $$
  so that
  $$
   \begin{aligned} [\tilde{J}^{\prime 0}{}_{\lambda} \tilde{G}^{+-}]&= [\tilde{J}^{\prime 0}{}_{\lambda}K(
   \widetilde G^{++})] =-K[\tilde{J}^{\prime 0}{}_{\lambda}
   \widetilde G^{++}] =-\widetilde G^{+-}-\la K(\s^{++})\\
   &=-\widetilde G^{+-}-\la\tfrac{n-1}{\sqrt{k_1+n}}\ov{ \phi_1(\s_Kh_2)}=-\widetilde G^{+-}-\la \tfrac{n-1}{\sqrt{k_1+n}}\ov{\phi_1(e)} =-\widetilde G^{+-}-\la \s ^{+-}
 \end{aligned}
   $$
   and
  $$
   \begin{aligned} [\tilde{J}^{\prime 0}{}_{\lambda} \tilde{G}^{-+}]&= [\tilde{J}^{\prime 0}{}_{\lambda}K(
   \widetilde G^{--})] =-K[\tilde{J}^{\prime 0}{}_{\lambda}
   \widetilde G^{--}] =\widetilde G^{-+}+\tfrac{\la}{a} K(\s^{--})\\
   &=\widetilde G^{-+}+\tfrac{\la}{a} \tfrac{k_1+1}{\sqrt{k_1+n}} \ov{\phi_1(\s_Kh_1)}=\widetilde G^{-+}-\tfrac{\la}{a} \tfrac{k_1+1}{\sqrt{k_1+n}} \ov{\phi_1(f)}=\widetilde G^{+-}+\tfrac{\la}{a} \s ^{-+}.
 \end{aligned}
$$
Relations involving $[\widetilde J^{'\pm}_\l \widetilde G^{\pm\pm}]$ are obtained similarly using 
$$
\widetilde J^{'\pm}=A^-(\pm K+\sqrt{-1}JK)=\pm\Theta_\q(K)+\sqrt{-1}\Theta_\q(JK).
$$
To compute $[\widetilde J^{0}_\l \widetilde G^{++}]$, we use the equation \eqref{widetildeJ} to write
$$
\widetilde J^{0}=(a+1) \widetilde J-a\widetilde J^0.
$$
By \eqref{reN=2}, $[\widetilde J_\l \widetilde G^{++}]=\widetilde G^{++}$, so
$$
[\widetilde J^{0}{}_\l \widetilde G^{++}]=(a+1)\widetilde G^{++}-a(\widetilde G^{++}+\la\s^{++})=\widetilde G^{++}-a\la \s^{++}.
$$
The remaining relations in \eqref{prime} are computed similarly.

Relations \eqref{seconde} are quickly computed directly using the fact that, if $T\in so(V,B)$ and $v\in V$, then, by \eqref{1lambda2}
\begin{equation}\label{thetaTv}
[\Theta_V(T)_\l \ov v]=\ov{T(v)}.
\end{equation}
 
 The final task is the verification of the relations in \eqref{terze}.
 In general, if $q\in\q$, then 
 $$
 [\widetilde G_\la \ov q]=\sum_r\frac{1}{2\sqrt{k_r+h^\vee_r}}\left(q_{\g_i}+\half\sum_i:\overline{[q,q_{ri}]}_\q\ov q^{ri}:\right).
 $$
 In particular, since $J\phi_1(h_1)=\phi_1(\s_Jh_1)=\sqrt{-1}\phi_1(h_1)$,
 $$
   \begin{aligned}  [{\widetilde{G}^{++}}{}_{\lambda} \sigma^{--}]&=\tfrac{k_1+1}{\sqrt{k_1+n}}\left([\widetilde G{}_{\lambda}\ov{\phi_1(h_1)}]+\sqrt{-1}[(\widetilde G_J)_\la \ov{\phi_1(h_1)}]\right)\\
   &=\tfrac{k_1+1}{\sqrt{k_1+n}}\left([\widetilde G{}_{\lambda}\ov{\phi_1(h_1)}]+J[(\widetilde G)_\la \ov{\phi_1(h_1)}]\right),
   \end{aligned}
   $$
   $$
   [\widetilde G{}_{\lambda}\ov{\phi_1(h_1)}]=\frac{1}{2\sqrt{k_0}}\phi_1(h_1)_{\g_0}+\frac{1}{2\sqrt{k_1+n}}\phi_1(h_1)_{\g_1}+\frac{1}{4\sqrt{k_1+n}}(\sum_{i}:\ov{[\phi_1(h_1),q_{1i}]_\q}\ov q^{1i}:),
   $$
 hence
 \begin{align*}
&J(  [\widetilde G{}_{\lambda}\ov{\phi_1(h_1)}])=\\&\frac{1}{2\sqrt{k_0}}\phi_1(h_1)_{\g_0}+\frac{1}{2\sqrt{k_1+n}}\phi_1(h_1)_{\g_1}+\frac{1}{4\sqrt{k_1+n}}\sum_{i}:\ov{J([\phi_1(h_1),q_{1i})]_\q)}\ov {J(q^{1i})}:).
  \end{align*}
   Note that
   $$
   \begin{aligned}
   \sum_{i}:\ov{J([\phi_1(h_1),q_{1i})]_\q}\ov {J(q^{1i})}:)&=\sum_i\a_{1i}(\phi_1(h_1))\ov{J(q_{1i})}\ov {J(q^{1i})}\\&=\sum_i\a_{1i}(\phi_1(h_1))\ov{q_{1i}}\ov {q^{1i}}= \sum_{i}:\ov{[\phi_1(h_1),q_{1i})]_\q}\ov {q^{1i}}:
   \end{aligned}
   $$
   so that, since $\phi_1(h)\in\g_1$, 
 $$
   \begin{aligned}  &[{\widetilde{G}^{++}}{}_{\lambda} \sigma^{--}]\\
   &=\tfrac{k_1+1}{\sqrt{k_1+n}}\left(\frac{1}{\sqrt{k_0}}\phi_1(h_1)_{\g_0}+\frac{1}{\sqrt{k_1+n}}\phi_1(h_1)_{\g_1}+\frac{1}{2\sqrt{k_1+n}}\sum_{i}:\ov{[\phi_1(h_1),q_{1i}]_\q}\ov q^{1i}:\right)\\
  &=\tfrac{k_1+1}{\sqrt{k_1+n}}\left(\frac{1}{2\sqrt{k_0}}\phi_1(h)_{\g_0}+\frac{1}{2\sqrt{k_0}}(d_1)_{\g_0}+\frac{1}{2\sqrt{k_1+n}}\phi_1(h)_{\g_1}+\frac{1}{2\sqrt{k_1+n}}(d_1)_{\g_1}\right)\\
  &+\frac{k_1+1}{4(k_1+n)}\sum_{i}(:\ov{[\phi_1(h),q_{1i}]_\q}\ov q^{1i}:+:\ov{[d_1,q_{1i}]_\q}\ov q^{1i}:)\\
   &=\tfrac{k_1+1}{\sqrt{k_1+n}}\left(\frac{1}{2\sqrt{k_0}}(d_1)_{\g_0}+\frac{1}{2\sqrt{k_1+n}}(d_1)_{\g_1}+\frac{1}{2\sqrt{k_1+n}}\Theta_\q(ad(d_1))\right)\\
  &+\frac{k_1+1}{2(k_1+n)}(\phi_1(h)+\Theta_\q(ad(\phi_1(h))).
   \end{aligned}
   $$
      Note that $\phi_1(h)=-\sqrt{-1}J_1=-\sqrt{-1}A^+(\s_J)+\sqrt{-1}\Theta_{\mathfrak s}(ad(J_1)-J)$ and that
   $
   \Theta_\q(ad(\phi_1(h))=-\sqrt{-1}A^-(\s_J)-\sqrt{-1}\Theta_{\mathfrak s}(ad(J_1)-J)$, so $\phi_1(h)+ \Theta_\q(ad(\phi_1(h))=A^+(h)+A^-(h)=\widetilde J^0-\widetilde J^{'0}$. Moreover $ad(d_1)_{\g_0}=0$ since $\g_0$ is in the center of $\g$. This implies
   $$
   \begin{aligned}
    &[{\widetilde{G}^{++}}{}_{\lambda} \sigma^{--}]\\   &=\tfrac{k_1+1}{\sqrt{k_1+n}}\left(\frac{1}{2\sqrt{k_0}}(d_1)_{\g_0}+\frac{1}{2\sqrt{k_1+n}}(d_1)_{\g_1}+\Theta_\q(ad(\frac{1}{2\sqrt{k_0}}(d_1)_{\g_0}+\frac{1}{2\sqrt{k_1+n}}(d_1)_{\g_1}))\right)\\
  &+\frac{k_1+1}{2(k_1+n)}(\widetilde J^0-\widetilde J^{'0})=(\tfrac{a}{2})^{1/2}\xi+\frac{a}{2(a+1)}(\widetilde J^0-\widetilde J^{'0}).     \end{aligned}
         $$
         The remaining relations in \eqref{terze} involving $\widetilde G^{\pm\pm}$ and $\s^{\pm\pm}$ are checked in the same way.
         
         We now check the relations involving $\widetilde G^{\pm\pm}$ and $\xi$:

         $$
         \begin{aligned}
         &[\widetilde G^{++}{}_\la [{\widetilde{G}^{\,++}}{}_{\mu} \sigma^{--}]]= \frac{a}{2(a+1)}[\widetilde G^{++}{}_\la(\tilde{J}^0 - \tilde{J}^{\prime 0})]
         +\left(\tfrac{a}{2}\right)^{1/2} [\widetilde G^{++}{}_\la \xi ]\\
         &=- \frac{a}{2(a+1)}[(\tilde{J}^0 -\tilde{J}^{\prime 0})_{-\l-\partial}\widetilde G^{++}] +\left(\tfrac{a}{2}\right)^{1/2} [\widetilde G^{++}{}_\la \xi ]\\
         &=- \frac{a}{2}\left((\partial +\la )\s^{++}\right) +\left(\tfrac{a}{2}\right)^{1/2} [\widetilde G^{++}{}_\la \xi ].
         \end{aligned}
         $$
On the other hand, by Jacobi identity,      since  $[\widetilde G^{++}{}_\la   \widetilde G^{++}]=0$, we have 
  $$ [\widetilde G^{++}{}_\la [{\widetilde{G}^{\,++}}{}_{\mu} \sigma^{--}]]=   -[\widetilde G^{++}{}_\mu [{\widetilde{G}^{\,++}}{}_{\la}\sigma^{--}]= \frac{a}{2}\left((\partial +\mu )\s^{++}\right) -\left(\tfrac{a}{2}\right)^{1/2} [\widetilde G^{++}{}_\mu \xi ].
  $$
  Equating and plugging $\mu=\la$, we find 
  $$
   [{\widetilde{G}^{\,++}}{}_{\la} \sigma^{--}]=\left(\tfrac{a}{2}\right)^{1/2} (\partial +\la )\s^{++}.
  $$
  The other relations in \eqref{terze} are checked likewise.
  
  The remaining relations are 
$[\widetilde J^{s}{}_\la \xi]=[\widetilde J^{'s}{}_\la \xi]=[\s^{\pm\pm}{}_\la \xi]=0$ and the relations involving $[\s^{\pm\pm}{}_\la \s^{\pm\pm}]$ and $[\xi_\la \xi]$. These are all easily checked using the explicit realizations of the fields in $V^k(\g)\otimes F(\ov\q)$. As an example we compute explicitly here $[\xi_\la \xi]$: recalling that $(d_1,d_1)=-2$ and that, if $x,y\in \g_1=sl(n)$, $Tr(ad(x)ad(y))=2n(x,y)$, we find
$$
\begin{aligned}
&[\xi_\la \xi]=\tfrac{(k_1+1)(n-1)}{2(k_1+n)}[(\widetilde d_1+\Theta_\q(ad(\widetilde d_1)))_\la(\widetilde d_1+\Theta_\q(ad(\widetilde d_1)))]\\
&=\tfrac{(k_1+1)(n-1)}{2(k_1+n)}\la\left(((d_1)_{\g_0} ,(d_1)_{\g_0} )+\frac{k_1}{k_1+n}((d_1)_{\g_1} ,(d_1)_{\g_1} )+\half Tr(ad(\widetilde d_1)^2))\right)\vac\\
&=\tfrac{(k_1+1)(n-1)}{2(k_1+n)}\la\left(((d_1)_{\g_0} ,(d_1)_{\g_0} )+\frac{k_1}{k_1+n}((d_1)_{\g_1} ,(d_1)_{\g_1} )+\frac{n}{k_1+n}((d_1)_{\g_1} ,(d_1)_{\g_1} )\right)\vac\\
&=\tfrac{(k_1+1)(n-1)}{2(k_1+n)}\la(d_1,d_1)=-\la\tfrac{(k_1+1)(n-1)}{k_1+n}\vac=-\la\frac{\widetilde c}{6}\vac,
\end{aligned}
$$
as wished.
  \end{proof} 

\section{The vertex algebra $W^{k}_{\min}(D(2,1;a))$}
  Let $\g=D(2,1;a)$ with $a\in\R$, $a\ne 0,-1$.   
The vertex algebra $W^{k}_{\min}(\g)$ is  freely generated by fields  $J^0$, $J^{+}$, $J^-$, $J^{'0}$, $J^{'+}$, $J^{'-}$, $G^{++}$, $G^{+-}$, $G^{-+}$, $G^{--}$,  $\hat L$ and $\l$-brackets between them  given explicitly in 
\cite[\S\! 8.6]{KW1}, that we now recall (with a few amendments): $\hat L$ is a conformal vector with central charge $-6k-3$, 
the fields $J$'s (resp.~$G$'s) are  primary with respect to $\hat L$
of conformal weight $1$ (resp.~$3/2$).  The non-zero
$\lambda$-brackets between these fields are as follows:
\begin{equation}\label{LlambdaG}
\begin{aligned}
  &[{J^0}{}_{\lambda}J^0] = -2\lambda ((a+1)k+1) \vac\, , \,
     [{J^{\prime 0}}{}_{\lambda}J^{\prime 0}] =-2 \lambda
       (\frac{a+1}{a}k+1)\vac \, , \\
& [{J^+}{}_{\lambda}J^-] = J^0 -\lambda ((a+1)k+1)\vac\, , \,
     [{J^{\prime +}}{}_{\lambda} J^{\prime -}]=
     J^{\prime 0}-\lambda \ (\frac{a+1}{a}k+1 )\vac \, , \\
& [{J^0}{}_{\lambda}J^{\pm}] = \pm 2 J^{\pm}\, , \,
     [{J^{\prime 0}}{}_{\lambda} J^{\prime \pm}]=\pm
     2J^{\prime \pm}\, , \\
& [{J^0}{}_{\lambda} G^{+\pm}] =  G^{+\pm}\, , \,
     [{J^0}{}_{\lambda}G^{-\pm}] =- G^{-\pm}\, , \,
   [{J^{\prime 0}}{}_{\lambda} G^{\pm+}] =  G^{\pm+}
     \, , \, [{J^{\prime 0}}{}_{\lambda} G^{\pm-}]
     =- G^{\pm-}\, , \\
& [{J^+}{}_{\lambda}G^{- -}]= -G^{+-}\, , \,
     [{J^+}{}_{\lambda}G^{-+}] = -G^{++}\, , \,
     [{J^{\prime +}}{}_{\lambda} G^{--}] =G^{-+}\, , \,
     [{J^{\prime +}}{}_{\lambda}G^{+-}]=G^{++}\, , \\
& [{J^-}{}_{\lambda} G^{++}] = -G^{-+}\, , \,
     [{J^-}{}_{\lambda}G^{+-}] = -G^{--}\, , \,
     [{J^{\prime -}}{}_{\lambda}G^{++}] = G^{+-}\, , \,
     [{J^{\prime -}}{}_{\lambda}G^{-+}]=G^{--}. \\
\end{aligned}
\end{equation}
\begin{equation}\label{GlambdaG}
\begin{aligned}
& [{G^{++}}{}_{\lambda}G^{++}] = \frac{2a}{(a+1)^2}:
     J^+J^{\prime+} : \, , \, [{G^{--}}{}_{\lambda}G^{--}] =
     \frac{2a}{(a+1)^2} : J^-J^{\prime-} : \, , \\
&[{G^{-+}}{}_{\lambda}G^{-+}] = -\frac{2a}{(a+1)^2}:
     J^-J^{\prime +}: \, , \, [{G^{+-}}{}_{\lambda}G^{+-}]
     =-\frac{2a}{(a+1)^2}:J^+J^{\prime -}:\, ,\\
& [{G^{++}}{}_{\lambda}G^{-+}] = \frac{a}{(a+1)^2}:J^0J^{\prime+}
       :-\frac{a}{a+1}(k+\frac{1}{a+1})(\partial +2\lambda)
       J^{\prime +}\, , \\
&[{G^{++}}{}_{\lambda}G^{+-}] = -\frac{a}{(a+1)^2}:
       J^{\prime 0}J^+ : +\frac{1}{a+1}(k+\frac{a}{a+1})
       (\partial +2\lambda)J^+ \, , \\
&[{G^{--}}{}_{\lambda}G^{-+}] = \frac{a}{(a+1)^2}:
       J^{\prime 0}J^- :+\frac{1}{a+1}(k+\frac{a}{a+1})
       (\partial +2\lambda)J^- \, , \\
&[{G^{--}}{}_{\lambda}G^{+-}] = -\frac{a}{(a+1)^2}:
       J^0J^{\prime -} :-\frac{a}{a+1}(k+\frac{1}{a+1})
       (\partial +2\lambda)J^{\prime -}\, .
\end{aligned}
\end{equation}
\begin{equation}\label{GlambdaGbis}
\begin{aligned}
&[{G^{++}}{}_{\lambda}G^{--}] = k\hat L+\frac{1}{4}
     (\frac{1}{a+1}:J^0J^0 :+\frac{a}{a+1}:
  J^{\prime 0}J^{\prime 0}:
   -\frac{1}{(a+1)^2}:( J^0+aJ^{\prime 0})^2:) \\
    & +\frac{a}{(a+1)^2}(:J^+J^- :+: J^{\prime +}J^{\prime -}:)
       -\frac{1}{2(a+1)}\partial (J^0+aJ^{\prime 0})\\
    & +\frac{k+1}{2(a+1)}(\partial +2\lambda)(J^0+aJ^{\prime 0})
       -\frac{\lambda}{(a+1)^2} (J^0+a^2J^{\prime 0})-
       \lambda^2 (k(k+1)+\frac{a}{(a+1)^2})\vac\, , \\
&[{G^{-+}}{}_{\lambda}G^{+-}] = -k\hat L +\frac{1}{4}
       (-\frac{1}{a+1}: J^0J^0:-\frac{a}{a+1}:
       J^{\prime 0}J^{\prime 0}:+\frac{1}{(a+1)^2}:
       (J^0-aJ^{\prime 0})^2 :)\\
    & -\frac{a}{(a+1)^2}(:J^+J^-:+:J^{\prime +}J^{\prime -}:)
       +\frac{1}{2(a+1)}\partial (J^0+aJ^{\prime 0})-
       \frac{1}{(a+1)^2}\partial J^0 \\
    & +\frac{k+1}{2(a+1)}(\partial +2\lambda)(J^0-aJ^{\prime 0})
       -\frac{\lambda}{(a+1)^2} (J^0-a^2J^{\prime 0}) +
       \lambda^2  (k(k+1)+\frac{a}{(a+1)^2})\vac.
\end{aligned}
\end{equation}

Let $\C^4_{\bar1}$ be the odd space (over $\C$) with basis $\{\s^{++},\s^{+-},\s^{-+},\s^{--}\}$ equipped with the symmetric bilinear form $\langlerangle$  given by
\begin{equation}\label{formasigma1}\begin{aligned}
&\langle \s^{++},\s^{++}\rangle=\langle \s^{++},\s^{+-}\rangle=\langle\s^{++},\s^{-+}\rangle=\langle \s^{+-},\s^{+-}\rangle=0, \langle \s^{++},\s^{--}\rangle=k
\\
&\langle \s^{+-},\s^{-+}\rangle=k,\ \langle \s^{+-},\s^{--}\rangle=\langle \s^{-+},\s^{-+}\rangle=\langle \s^{-+},\s^{--}\rangle=\langle \s^{--},\s^{--}\rangle=0.
\end{aligned}\end{equation}
and let $F(\C^4_{\bar1})$ be the corresponding fermionic vertex algebra.

 Recall that $V^{k}(\C\xi)$ is the universal affine  vertex algebra with $\l$-bracket $[\xi_\l\xi]=(\xi|\xi)\vac$ where $(\cdot|\cdot)$ is the bilinear form on $\C\xi$ defined by setting $(\xi|\xi)=k$.

Define
\begin{equation}\label{Jtildas}
\begin{aligned}
 \widetilde{J}^0 &= J^0 -\tfrac{1}{k}: \sigma^{--} \sigma^{++}:
          + \tfrac{1}{k}: \sigma^{+-} \sigma^{-+}: \, , \,
         \widetilde{J}^{\prime 0} = J^{\prime 0}  -\tfrac{1}{k}: \sigma^{--}\sigma^{++}:
           - \tfrac{1}{k}:\sigma^{+-} \sigma^{-+}:\, ,\\
  \widetilde{J}^+ &= J^++\tfrac{a}{k}: \sigma^{+-}\sigma^{++}: \, , \,
         \widetilde{J}^- = J^- + \tfrac{1}{ak}
          :\sigma^{--}\sigma^{-+}: \, , \\
  \widetilde{J}^{\prime +} &= J^{\prime +} +
          \tfrac{1}{k}:\sigma^{-+}\sigma^{++}: \, , \,
         \widetilde{J}^{\prime -} = J^{\prime -}+
          \tfrac{1}{k}:\sigma^{--}\sigma^{+-}: \, ,
          \end{aligned}
          \end{equation}
\begin{equation}\label{GtildaandG}
\begin{aligned}
  \widetilde{G}^{++} &=& \tfrac{1}{\sqrt{k}}G^{++}-\tfrac{1}{k(a+1)}:J^+ \sigma^{-+}:
          + \tfrac{a}{k(a+1)}: J^{\prime +}\sigma^{+-}:
          + \tfrac{a}{2k(a+1)}:J^0 \sigma^{++}:\\
      && -\tfrac{a}{2k(a+1)} :J^{\prime 0}\sigma^{++}:
          +\tfrac{1}{k} \left( \tfrac{a}{2}\right)^{1/2}:\xi \sigma^{++}:
          +\tfrac{a}{k^2 (a+1)}:\sigma^{++}\sigma^{+-}\sigma^{-+}:\, ,\\
   \widetilde{G}^{--} &=&\tfrac{1}{\sqrt{k}} G^{--} + \tfrac{a}{k(a+1)}: J^- \sigma^{+-}:
          -\tfrac{1}{k(a+1)}: J^{\prime -} \sigma^{-+}:
          -\tfrac{1}{2k(a+1)}: J^0 \sigma^{--}:\\
      && + \tfrac{1}{2k(a+1)}: J^{\prime 0}\sigma^{--}:
          +\tfrac{1}{k}\left( \tfrac{1}{2a}\right)^{1/2}:\xi \sigma^{--}:
          -\tfrac{1}{k^2(a+1)}:\sigma^{--}\sigma^{+-}\sigma^{-+}:\, ,\\
   \widetilde{G}^{+-} &=& -\tfrac{1}{\sqrt{k}}G^{+-} +\tfrac{1}{k(a+1)}:J^+\sigma^{--}:
          +\tfrac{a}{k(a+1)}: J^{\prime -}\sigma^{++}:
          +\tfrac{a}{2k(a+1)}: J^0 \sigma^{+-}: \,  \\
      && +\tfrac{a}{2k(a+1)}: J^{\prime 0} \sigma^{+-}:
          + \tfrac{1}{k}\left( \tfrac{a}{2}\right)^{1/2}:\xi \sigma^{+-}:
          -\tfrac{a}{k^2 (a+1)}:\sigma^{+-}\sigma^{--}\sigma^{++}:\, ,\\
   \widetilde{G}^{-+} &=&\tfrac{1}{\sqrt{k}}G^{-+}-\tfrac{1}{k(a+1)}: J^{\prime +}
          \sigma^{--}:-\tfrac{a}{k(a+1)}: J^- \sigma^{++}:
          -\tfrac{1}{2k(a+1)}:J^0 \sigma^{-+}: \,  \\
      && -\tfrac{1}{2k(a+1)}:J^{\prime 0}\sigma^{-+}:+\tfrac{1}{k}
          \left( \tfrac{1}{2a}\right)^{1/2}:\xi \sigma^{-+}:
          +\tfrac{1}{k^2 (a+1)}:\sigma^{-+}\sigma^{--}\sigma^{++}:\,,
\end{aligned}
\end{equation}
\begin{equation}\label{LL}    \widetilde L =\hat L + \tfrac{1}{2k}(: \partial \sigma^{--}\sigma^{++}:
           +:\partial \sigma^{++} \sigma^{--}:
           +:\partial\sigma^{-+}\sigma^{+-}:
           +:\partial \sigma^{+-}\sigma^{-+}:+ : \xi^2 :)\, .
\end{equation}

The above formulas establish  a vertex algebra isomorphism

 \begin{equation}\label{WN=3embedding}
V^{N=4}_{\tilde c,a}\cong W^{k}_{\min}(D(2,1;a))\otimes V^{k}(\C\xi)\otimes F(\C^4_{\bar1}).
\end{equation} 
where $\tilde c=-6 k$.
\begin{remark} Proposition \ref{RP} establishes a vertex algebra homomorphism
$$
\widetilde \Gamma:V^{N=4}_{\tilde c,a}\to V^\k(\g)\otimes F(\ov{\mathfrak q})$$
with $\tilde c=6\frac{(n-1)(k_1+1)}{k_1+n}$
hence, composing this map with the isomorphism in \eqref{WN=3embedding}, we obtain a vertex algebra  homomorphism \begin{equation}\label{embedding}\Gamma: W^{k}_{\min}(D(2,1;a))\to V^\k(\g)\otimes F(\ov{\mathfrak q})\end{equation}  with $k=-\tfrac{(n-1)(k_1+1)}{k_1+n}=-\frac{(n-1)a}{1+a}$, where $a=\frac{k_1+1}{n-1}$. 
\end{remark}

\section{Application to unitarity of massless non-twisted representations}\label{Application}
Here we describe the construction of unitary massless representations as given in \cite{Gunaydin}. The massless representations are called extremal in \cite{KMP1}.

Let $\g$ be as in Examples \ref{Ex1} and \ref{Ex2}. In this Section, we choose  $k_1\in \mathbb Z_{\ge 0}$ and $k_0=k_1+n$ so that $\k+\hvee=(k_1+n)(\cdot,\cdot)$.

Let $\a_i$ be the root of $\g$ corresponding to the root vector $E_{i.i+1}$. We can choose the set of positive roots so that $\Pi=\{\a_1,\ldots,\a_{n-1}\}$ is the corresponding set of simple roots. Let $\{\omega_1,\ldots,\omega_{n-1}\}$ be the corresponding fundamental weights of $sl(n)$. Extend these to $\g$ by setting them to be zero on the center  $\g_0$.
 If $\nu\in\h^*$, we let $L(\nu)$ be the irreducible highest weight module (w.r.t. $\Dp$) for $V^\k(\g)$ of highest weight $\nu$ generated by a highest weight vector $v_\nu$.

\begin{lemma}\label{singularNS}Fix $0\le r_1\le k_1$ and let  $\Omega(r_1,r_2)$ be  the vector in  $L(r_1\omega_1)\otimes F(\ov \q)$ defined by
$$
\begin{cases}
\Omega(r_1,0)=v_{r_1\omega_1}\otimes\vac,\\
\Omega(r_1,r_2)=v_{r_1\omega_1}\otimes (\ov{E}_{n,n-r_2})_{-1/2}\dots (\ov{E}_{n,n-1})_{-1/2}\vac,\ r_2=1,\ldots,n-2.
\end{cases}
$$
Then the vectors $\Omega(r_1,r_2)$ are singular vectors for $W^{k}_{\min}(D(2,1;a))$.
\end{lemma}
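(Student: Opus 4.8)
The plan is to verify directly that each $\Omega(r_1,r_2)$ is annihilated by the annihilation (energy-lowering) modes of all the generators of $W^{k}_{\min}(D(2,1;a))$, acting through the embedding $\Gamma$ of \eqref{embedding} on $M:=L(r_1\omega_1)\otimes F(\ov\q)$. By the $\lambda$-brackets \eqref{LlambdaG}--\eqref{GlambdaGbis} the positive part of the mode algebra is generated, as a Lie superalgebra, by its lowest members, so it suffices to check
\[
J^a_{(1)}\Omega=J'^a_{(1)}\Omega=G^{\pm\pm}_{(1)}\Omega=0,\qquad J^+_{(0)}\Omega=J'^+_{(0)}\Omega=0,
\]
and that $\Omega$ is a joint eigenvector of $J^0_{(0)},J'^0_{(0)},\widehat L_{(1)}$; the higher modes then follow by repeatedly taking brackets. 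I would realize the generators inside $V^\k(\g)\otimes F(\ov\q)$ by inverting \eqref{Jtildas}, \eqref{GtildaandG}, \eqref{LL} to express the $W$-fields through the explicitly known $\widetilde J^a,\widetilde G^{\pm\pm},\xi,\sigma$ of \eqref{campi}, and use the orthogonal splitting $\q=\mathfrak s\oplus\mathfrak f_1$, hence $F(\ov\q)=F(\ov{\mathfrak s})\otimes F(\ov{\mathfrak f_1})$. The crucial bookkeeping fact is that the $\sigma$-fields live in $F(\ov{\mathfrak s})$ whereas the fermionic factor $\eta:=(\ov E_{n,n-r_2})_{-1/2}\cdots(\ov E_{n,n-1})_{-1/2}\vac$ of $\Omega$ lies in $F(\ov{\mathfrak f_1})$ (each $E_{n,n-j}$, $1\le j\le r_2\le n-2$, sits in the last row, middle columns, i.e.\ in $\mathfrak f_1$), so $\Omega=v_{r_1\omega_1}\otimes\vac_{\mathfrak s}\otimes\eta$ with vacuum $\ov{\mathfrak s}$-sector.

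The current conditions are then quick. By \eqref{Jtildas} and \eqref{campi} the unprimed currents equal $A^+$, i.e.\ a piece from $\phi_1(sl_2)\subset\g$ acting on $L(r_1\omega_1)$ plus bilinears in $F(\ov{\mathfrak s})$; their $m\ge1$ modes kill the highest weight vector $v_{r_1\omega_1}$ and the vacuum $\vac_{\mathfrak s}$, and $J^+_{(0)}$ kills $\Omega$ because $\phi_1(e)=x_\theta$ is a positive root vector while the zero mode of a fermionic $\Theta$ annihilates $\vac_{\mathfrak s}$. The primed currents come from $A^-$, which by \eqref{A-} carries the genuine rotation $\Theta_{\mathfrak f_1}(\ad\phi_1(x))$ of the $\ov{\mathfrak f_1}$-sector. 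Here one uses $[\Theta_{\mathfrak f_1}(T)_\lambda\ov v]=\ov{T v}$ (cf.\ \eqref{thetaTv}) together with the elementary brackets
\[
[x_{-\theta},E_{n,n-j}]=0,\quad [h_\theta,E_{n,n-j}]=-E_{n,n-j},\quad [x_{\theta},E_{n,n-j}]=E_{1,n-j},
\]
and the vanishing of the trace-form pairings $(E_{n,n-j},E_{n,n-j'})=(E_{1,n-j},E_{n,n-j'})=0$; these force all contraction terms in $J'^a_{(1)}\eta$ to vanish and give $J'^+_{(0)}\Omega=0$, while $J^0_{(0)}$ and $J'^0_{(0)}$ act on $\Omega$ by scalars.

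The heart of the matter is $G^{\pm\pm}_{(1)}\Omega=0$. I would write $G^{\pm\pm}=\sqrt{k}\,\widetilde G^{\pm\pm}-R^{\pm\pm}$, where $R^{\pm\pm}$ gathers the correction terms $:J\sigma:$, $:\xi\sigma:$ and $:\sigma\sigma\sigma:$ of \eqref{GtildaandG}, and evaluate both on $\Omega$. The cubic term contributes nothing, its $m\ge1$ modes killing $\vac_{\mathfrak s}$. Recognising $\widetilde G^{\pm\pm}$ as the $N=2$ supercurrents $\widetilde G^{\pm}$ of \eqref{Gtildes} and using the bracket $[\widetilde G^{\pm}{}_\lambda\ov q]$ (computed in the proof of Proposition \ref{RP}) together with \eqref{gpgm}, one sees that, since $E_{n,n-j}\in\mathfrak u^-_J$ is isotropic, $\widetilde G^{++}_{(0)}\ov{E_{n,n-j}}$ is purely fermionic, whereas $\widetilde G^{--}_{(0)}\ov{E_{n,n-j}}$ carries a bosonic lowering term proportional to $E_{n,n-j}$. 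The resulting $(E_{n,\bullet})_{(0)}v_{r_1\omega_1}$ contributions must cancel against the bosonic pieces produced by the $m\ge1$ modes of $:J\sigma:$ and $:\xi\sigma:$ in $R^{\pm\pm}$ --- the latter coupling both sectors through the $\ov{\mathfrak f_1}$-part of $\Theta_\q(\ad\widetilde d_1)$ inside $\xi$. The remaining $G^{\pm\pm}_{(m)}$, $m\ge2$, and $\widehat L_{(m)}$, $m\ge2$, then follow by bracketing the modes already treated.

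The main obstacle is precisely this cancellation: organising the mode-and-contraction combinatorics of $G^{\pm\pm}_{(1)}$ so that the bosonic lowering contributions of the cubic Dirac operator are exactly matched by those of the correction terms. This is also where the hypotheses enter: $0\le r_1\le k_1$ keeps $r_1\omega_1$ dominant at level $k_1$, so that $L(r_1\omega_1)$ is the correct integrable module and the weight arithmetic closes up, while $1\le r_2\le n-2$ confines the whole fermion string $E_{n,n-1},\dots,E_{n,n-r_2}$ to $\mathfrak f_1$, which is what makes the isotropy and form-vanishing identities above hold uniformly.
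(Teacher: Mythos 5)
Your overall strategy --- direct verification through the coset realization, the orthogonal splitting $\q=\mathfrak s\oplus\mathfrak f_1$ with the $\sigma$'s confined to $F(\ov{\mathfrak s})$ and the fermion string of $\Omega(r_1,r_2)$ confined to $F(\ov{\mathfrak f_1})$, and reduction to the lowest positive modes --- is exactly the paper's, and your treatment of the currents is essentially correct. The gap is in the step you yourself flag as ``the heart of the matter'': you do not prove $G^{\pm\pm}_{1/2}\Omega(r_1,r_2)=0$, you only assert that the bosonic lowering contributions of the cubic Dirac operator ``must cancel'' against those produced by the correction terms $:J\sigma:$, $:\xi\sigma:$ of \eqref{GtildaandG}. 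That predicted mechanism is not what happens, and as stated it is not a proof. In the actual argument the two families of terms vanish \emph{separately}: (i) the correction terms $G^{--}-\widetilde G^{--}$ annihilate $\Omega(r_1,r_2)$ on their own, because their $\lambda$-brackets with $\ov E_{n,j}$, $2\le j\le n-1$, produce only fields whose relevant modes commute with the $(\ov E_{n,r})_{-1/2}$'s and kill the vacuum; (ii) in $\widetilde G^{--}_{1/2}\Omega(r_1,r_2)$, written via \eqref{easyG--}, the quadratic part produces terms $(E_{n,j})_0 v_{r_1\omega_1}\otimes(\cdots)$ which vanish because $E_{n,j}$ is a root vector for a root orthogonal to $r_1\omega_1$ (an $sl_2$-integrability argument, not a cancellation against anything), and the cubic fermionic part vanishes using the isotropy of $\mathfrak u_J^-$ together with the antisymmetry of the surviving sum over pairs. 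Without these two inputs your ``cancellation'' cannot be organised, and the proposal stops exactly where the real work begins.

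Two smaller points. First, by \eqref{LlambdaG} it suffices to check the single mode $G^{--}_{1/2}$: the other three $G^{\pm\pm}$ are obtained from it by applying zero modes of currents already known to annihilate or act diagonally on $\Omega(r_1,r_2)$, so checking all four independently, as you propose, is redundant. Second, $\hat L_m\Omega(r_1,r_2)=0$ for $m>0$ needs no separate analysis: by \eqref{GlambdaGbis} it follows from $[G^{++}_{1/2},G^{--}_{m-1/2}]$ modulo terms built from the currents, all of whose relevant modes have already been shown to annihilate $\Omega(r_1,r_2)$.
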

\begin{proof}To simplify notation we set $\nu=r_1\omega_1$ throughout the proof.

We first prove that $A^-(e)_1\Omega(r_1,r_2)=0$ and that $A^-(f)_0\Omega(r_1,r_2)=0$.  Since $E_{n,j}\in \mathfrak f_1$ for $1\le j\le n-2$, $[A^-(x)_m,(\ov E_{n,j})_{-1/2}]=[\Theta_\q(ad(\phi_1(x)))_m,(\ov E_{n,j})_{-1/2}]$. By \eqref{thetaTv} and \cite[(2.49)]{DK},
$$
[\Theta_\q(ad(x))_n,\ov q_{-1/2}]=\ov{ad(x)(q)}_{n-1/2}.
$$
In particular
$$
\begin{aligned}
A^-(e)_1\cdot\Omega(r_1,r_2)&=v_{\nu}\otimes \Theta_\q(ad(\phi_1(e))_1\left((\ov{E}_{n,n-r_2})_{-1/2}\cdots (\ov{E}_{n,n-1})_{-1/2}\vac\right)\\
&=\sum_jv_{\nu}\otimes \left((\ov{E}_{n,n-r_2})_{-1/2}\cdots\ov{[E_{1,n},E_{n,j} ]}_{1/2}\cdots(\ov{E}_{n,n-1})_{-1/2}\vac\right)\\
&=\sum_jv_{\nu}\otimes \left((\ov{E}_{n,n-r_2})_{-1/2}\cdots(\ov{E}_{1,j})_{1/2}\cdots(\ov{E}_{n,n-1})_{-1/2}\vac\right)=0,
\end{aligned}
$$ 
and
$$
\begin{aligned}
A^-(f)_0\cdot\Omega(r_1,r_2)&=v_{\nu}\otimes \Theta_\q(ad(\phi_1(f))_0\left((\ov{E}_{n,n-r_2})_{-1/2}\cdots (\ov{E}_{n,n-1})_{-1/2}\vac\right)\\
&=\sum_jv_{\nu}\otimes \left((\ov{E}_{n,n-r_2})_{-1/2}\cdots\ov{[E_{n,1},E_{n,j} ]}_{-1/2}\cdots(\ov{E}_{n,n-1})_{-1/2}\vac\right)=0.
\end{aligned}
$$ 

Next we check that $A^+(e)_0\Omega(r_1,r_2)=0$ and that $A^+(f)_1\Omega(r_1,r_2)=0$.  Since $E_{n,j}\in \mathfrak f_1$ for $1\le j\le n-2$ and 
$$[A^+(x)_m,(\ov{E}_{n,j})_{-1/2}]=[\Theta_{\mathfrak s}(R_x)_m,(\ov{E}_{n,j})_{-1/2}], 
$$
it follows that 
$$
\begin{aligned}
A^+(e)_0\cdot\Omega(r_1,r_2)&=(x_{\theta})_0v_{\nu}\otimes\left((\ov{E}_{n,n-r_2})_{-1/2}\cdots (\ov{E}_{n,n-1})_{-1/2}\vac\right)\\
&+v_{\nu}\otimes \Theta_{\mathfrak s}(R_{e})_0\left((\ov{E}_{n,n-r_2})_{-1/2}\cdots (\ov{E}_{n,n-1})_{-1/2}\vac\right)\\
&=(x_\theta)_0v_{\nu}\otimes \left((\ov{E}_{n,n-r_2})_{-1/2}\cdots (\ov{E}_{n,n-1})_{-1/2}\vac\right)=0,
\end{aligned}
$$
and
$$
\begin{aligned}
A^+(f)_1\cdot\Omega(r_1,r_2)&=(x_{-\theta})_1v_{\nu}\otimes\left((\ov{E}_{n,n-r_2})_{-1/2}\cdots (\ov{E}_{n,n-1})_{-1/2}\vac\right)\\
&+v_{\nu}\otimes \Theta_{\mathfrak s}(R_{f})_1\left((\ov{E}_{n,n-r_2})_{-1/2}\cdots (\ov{E}_{n,n-1})_{-1/2}\vac\right)\\
&=(x_{-\theta})_1v_{\nu}\otimes \left((\ov{E}_{n,n-r_2})_{-1/2}\cdots (\ov{E}_{n,n-1})_{-1/2}\vac\right)=0.
\end{aligned}
$$
Since $\s^{\pm\pm}\in\ov{\mathfrak s}$ and $(\mathfrak s,\mathfrak f_1)=0$, we have $[(\s^{\pm\pm})_\l\mathfrak f_1]=0$ hence $[\s^{\pm\pm}{}_\la \ov E_{n,j}]=0$. This implies that, for $s\in\{+,0,-\}$, 
$$
[J^s{}_\l \ov E_{n,j}]=[\widetilde J^s{}_\la \ov E_{n,j}]= [:\sigma^{--}\sigma^{+-}\sigma^{-+}:{}_\la \ov{E}_{n,j}]=0
$$
and
$$
[J^{'s}{}_\l \ov E_{n,j}]=[\widetilde J^{'s}{}_\la \ov E_{n,j}].
$$
In particular, $J^{'+}_0\Omega(r_1,r_2)=\widetilde J^{'+}_0\Omega(r_1,r_2)=-A^-(f)_0\Omega(r_1,r_2)=0$ and $J^{'-}_1\Omega(r_1,r_2)=\widetilde J^{'-}_1\Omega(r_1,r_2)=-A^-(e)_1\Omega(r_1,r_2)=0$.\newline
Similarly $J^{+}_0\Omega(r_1,r_2)=\widetilde J^{+}_0\Omega(r_1,r_2)=A^+(e)_0\Omega(r_1,r_2)=0$ and $J^{-}_1\Omega(r_1,r_2)=\widetilde J^{-}_1\Omega(r_1,r_2)=A^+(f)_1\Omega(r_1,r_2)=0$.

By the relations \eqref{LlambdaG}, in order to check that $(G^{\pm\pm})_m\Omega(r_1,r_2)=0$ for $m>0$, it is enough to check that $(G^{--})_{1/2}\Omega(r_1,r_2)=0$.
Observe that
$$
[: J^{'-} \sigma^{+-}:_\la \ov{E}_{n,j}]=-[:\Theta_{\ov\q}(ad(\phi_1(e)))\sigma^{+-}:_\la \ov{E}_{n,j}]=:\ov{[E_{1,n},E_{n,j}]}\sigma^{+-}:=:\ov E_{1,j}\sigma^{+-}:
$$
and
$$
[: J^{'0} \sigma^{--}:_\la \ov{E}_{n,j}]=-[:\Theta_{\ov\q}(ad(\phi_1(h)))\sigma^{--}:_\la \ov{E}_{n,j}]=:\ov{E}_{n,j}\sigma^{--}:.
$$

By \eqref{GtildaandG},
$$
\begin{aligned}
G^{--}{}&=\widetilde G^{--}-\tfrac{1}{k(a+1)}(a: J^- \sigma^{+-}: +: J^{\prime -} \sigma^{-+}:+\tfrac{1}{2}: J^0 \sigma^{--}:)\\
      &
       - \tfrac{1}{2k(a+1)}(: J^{\prime 0}\sigma^{--}:
          -2(a+1)\left( \tfrac{1}{2a}\right)^{1/2}:\xi \sigma^{--}:          +\tfrac{2}{k}:\sigma^{--}\sigma^{+-}\sigma^{-+}:)\, .
\end{aligned}
$$
Moreover, since $d_1\in\g_1$ if $n>2$,
$$
\begin{aligned}
[\xi_\la\ov{E}_{n,j}]&=\tfrac{\sqrt{(k_1+1)(n-1)}}{\sqrt{2(k_1+n)}}[(\widetilde d_1+\Theta_\q(ad(\widetilde d_1)))_\la\ov{E}_{n,j}]=\tfrac{\sqrt{(k_1+1)(n-1)}}{\sqrt{2}(k_1+n)}\ov{[d_1,E_{n,j}]}\\
&=-\tfrac{\sqrt{(k_1+1)(n-1)n}}{\sqrt{2(2-n)}(k_1+n)}\ov E_{n,j},
\end{aligned}
$$
and 
$$
[:\xi \sigma^{--}:_\la\ov{E}_{n,j}]=\tfrac{\sqrt{(k_1+1)(n-1)n}}{\sqrt{2(2-n)}(k_1+n)}:\ov E_{n,j}\s^{--}:.
$$
These formulas imply that
$$
\begin{aligned}
&[(G^{--}-\widetilde G^{--})_{1/2} ,(\ov{E}_{n,j})_{-1/2}]=-\tfrac{1}{k(a+1)}: \ov E_{1,j} \sigma^{-+}:_0
       - \tfrac{1}{2k(a+1)}(1
          -\sqrt{\tfrac{n}{2-n}}):\ov E_{n,j}\s^{--}:_0\, .
\end{aligned}
$$
Since, for $1\le r\le n-2$,
$$
[: \ov E_{1,j} \sigma^{-+}:_\la \ov{E}_{n,r}]=[: \ov E_{n,j} \sigma^{--}:_\la \ov{E}_{n,r}]=0,
$$
we see that 
$$
[: \ov E_{1,j} \sigma^{-+}:_0, (\ov{E}_{n,r})_{-1/2}]=[: \ov E_{n,j} \sigma^{--}:_0,(\ov{E}_{n,r})_{-1/2}]=0,
$$
hence
$$
(G^{--}-\widetilde G^{--})_{1/2}\Omega(r_1,r_2)=0.
$$
We need therefore to check that
$\widetilde G^{--}_{1/2}\Omega(r_1,r_2)=0$.

Recall that we chose $\k+\hvee=(k_1+n)(\cdot,\cdot)$ so that
$$
\widetilde G^{--}=\frac{1}{2\sqrt{k_1+n}}(G-\sqrt{-1}G_J).
$$
We can therefore use \eqref{gpgm} and write
\begin{equation}\label{easyG--}
\widetilde G^{--}=\frac{1}{2\sqrt{k_1+n}}(2\sum_i:u^i\overline{u}_i:-\sum_{i,j}:\overline{[u^i,u^j]}\overline{u}_i\overline{u}_j:),
\end{equation}
where $\{u_i\}$ is a basis of $\mathfrak u_J^+$ and $\{u^i\}$ is a basis of $\mathfrak u_J^-$ dual to the basis $\{u_i\}$. We can choose
$$
\{u_i\}=\{\half(h_\theta+d_1)\}\cup\{E_{1,j}\mid 2\le j\le n\}\cup\{E_{j,n}\mid 2\le j\le n\}
$$
and
$$
\{u^i\}=\{\half(h_\theta-d_1)\}\cup\{E_{j,1}\mid 2\le j\le n\}\cup\{E_{n,j}\mid 2\le j\le n\}.
$$
By \cite[(2.47)]{DK}, 
$:u^i\overline{u}_i:_{1/2}=\sum_{j\in\ZZ}u_{j}^i(\ov u_i)_{-j+1/2}$ so
$$
\begin{aligned}
&\sum_i:u^i\overline{u}_i:_{1/2}\Omega(r_1,r_2)=\sum_iu_{0}^iv_{\nu}\otimes (\ov u_i)_{1/2}(\ov{E}_{n,n-r_2})_{-1/2}\dots (\ov{E}_{n,n-1})_{-1/2}\vac\\
&=\sum_{j=n-r_2}^{n-1}(-1)^{n-r_2-j}(E_{n,j})_0v_{\nu}\otimes (\ov{E}_{n,n-r_2})_{-1/2}\cdots\widehat{(\ov{E}_{n,j})_{-1/2}}\cdots (\ov{E}_{n,n-1})_{-1/2}\vac.
\end{aligned}
$$
Since $E_{n,j}$ is a root vector for a root orthogonal to $\nu$, we see that $(E_{n,j})_0v_{\nu}=0$.

It remains to check that 
$$
\sum_{i,j}:\overline{[u^i,u^j]}\overline{u}_i\overline{u}_j:_{1/2}\Omega(r_1,r_2)=0.
$$
Applying \cite[(2.47)]{DK} twice, we find
$$
:\overline{[u^i,u^j]}\overline{u}_i\overline{u}_j:_{1/2}=\sum_{r+s+t=1/2}\overline{[u^i,u^j]}_r(\overline{u}_i)_s(\overline{u}_j)_t,
$$
so
$$
\begin{aligned}
&\sum_{i,j}:\overline{[u^i,u^j]}\overline{u}_i\overline{u}_j:_{1/2}\Omega(r_1,r_2)\\
&=\sum_{i,j}\sum_{s\le 1/2}\sum_{r\le1/2} v_{\nu}\otimes\overline{[u^i,u^j]}_{1/2-t-s}(\overline{u}_i)_s(\overline{u}_j)_t(\ov{E}_{n,n-r_2})_{-1/2}\dots (\ov{E}_{n,n-1})_{-1/2}\vac\\
&=\sum_{i,j} v_{\nu}\otimes\overline{[u^i,u^j]}_{1/2}(\overline{u}_i)_{1/2}(\overline{u}_j)_{-1/2}(\ov{E}_{n,n-r_2})_{-1/2}\dots (\ov{E}_{n,n-1})_{-1/2}\vac\\
&+\sum_{i,j} v_{\nu}\otimes\overline{[u^i,u^j]}_{1/2}(\overline{u}_i)_{-1/2}(\overline{u}_j)_{1/2}(\ov{E}_{n,n-r_2})_{-1/2}\dots (\ov{E}_{n,n-1})_{-1/2}\vac\\
&+\sum_{i,j} v_{\nu}\otimes\overline{[u^i,u^j]}_{-1/2}(\overline{u}_i)_{1/2}(\overline{u}_j)_{1/2}(\ov{E}_{n,n-r_2})_{-1/2}\dots (\ov{E}_{n,n-1})_{-1/2}\vac.
\end{aligned}
$$

Since $\mathfrak u_J^-$ is isotropic,
$$
\begin{aligned}
&\overline{[u^i,u^j]}_{1/2}(\overline{u}_i)_{1/2}(\overline{u}_j)_{-1/2}(\ov{E}_{n,n-r_2})_{-1/2}\dots (\ov{E}_{n,n-1})_{-1/2}\vac=0.
\end{aligned}
$$
and
$$
\begin{aligned}
&\overline{[u^i,u^j]}_{1/2}(\overline{u}_i)_{-1/2}(\overline{u}_j)_{1/2}(\ov{E}_{n,n-r_2})_{-1/2}\dots (\ov{E}_{n,n-1})_{-1/2}\vac\\
\end{aligned}
$$
so
{\small
$$
\begin{aligned}
&\sum_{i,j}:\overline{[u^i,u^j]}\overline{u}_i\overline{u}_j:_{1/2}\Omega(r_1,r_2)\\
&=\sum_{i,j} v_{\nu}\otimes\overline{[u^i,u^j]}_{-1/2}(\overline{u}_i)_{1/2}(\overline{u}_j)_{1/2}(\ov{E}_{n,n-r_2})_{-1/2}\dots (\ov{E}_{n,n-1})_{-1/2}\vac\\
&=\!\!\!\!\!\!\!\sum_{n-r_2\le i<j\le n-1}\!\!\!\!\!\!\!(-1)^{i+j} v_{\nu}\otimes\overline{[E_{n,i},E_{n,j}]}_{-\tfrac{1}{2}}(\ov{E}_{n,n-r_2})_{-\tfrac{1}{2}}\cdots \widehat{(\ov E_{n,i})_{-\tfrac{1}{2}}}\cdots\widehat{(\ov{E}_{n,j})_{-\tfrac{1}{2}}}\cdots(\ov{E}_{n,n-1})_{-\tfrac{1}{2}}\vac\\
&=0.
\end{aligned}
$$}

It remains only to check that $\hat L_m\Omega(r_1,r_2)=0$ when $m>0$. We checked above that $\Omega(r_1,r_2)$ is singular for $D(2,1;a)^\natural=\langle J^0,J^+,J^-\rangle\oplus\langle J^{'0},J^{'+},J^{'-}\rangle$ hence, if $x,y\in D(2,1;a)^\natural$ and $m>0$,
$:xy:_m\Omega(r_1,r_2)=0$, $x_m\Omega(r_1,r_2)=0$, and $(\partial x)_m\Omega(r_1,r_2)=0$. Thus, by \eqref{GlambdaGbis},
$$
\hat L_m\Omega(r_1,r_2)=[G^{++}_{1/2},G^{--}_{m-1/2}]\Omega(r_1,r_2)=0.
$$

\end{proof}

Recall from \cite{KMP1} that an irreducible highest weight module $L(\mu,\ell)$  for $W^{k}_{\min}(D(2,1;a))$ has parameters $(\mu,\ell)$ where $\mu$ is a weight of $D(2,1;a)^\natural\cong sl(2)\oplus sl(2)$ and 
$\ell$ is the minimal energy, i. e. the minimal eigenvalue of $L_0$. In our realization $D(2,1;a)^\natural=\langle J^0,J^+,J^-\rangle\oplus\langle J^{'0},J^{'+},J^{'-}\rangle$, so the Cartan subalgebra of $D(2,1;a)^\natural$ is $\h^\natural=\langle J^0,J^{'0}\rangle$. If 
$\mu\in (\h^\natural)^*$, we have $\mu=\tfrac{r_1}{2}\theta_1+\tfrac{r_2}{2}\theta_2$, with $\theta_1(J^0)=2, \theta_1(J^{'0})=0, \theta_2(J^{0})=0, \theta_2(J^{'0})=2$. The integrability conditions say that 
$r_i\in\ZZ,\,i=1,2,\, 0\le r_1\le k_1,\, 0\leq r_2\le n-2$.
 A  necessary condition for the unitarity of $L(\mu,\ell)$ is (cf. \cite[\S\ 12.4]{KMP1})
\begin{equation}\label{1b}
\begin{aligned}
\ell\ge A(k,\mu)&=\frac{2 (a+1) k (a r_2+r_1)-a
   (r_1-r_2)^2}{4 (a+1)^2 k}\\
   &=\frac{2r_2(k_1+1)+2 r_1 (n-1)+(r_1-r_2)^2}{4 (k_1+n)}.
   \end{aligned}
\end{equation}

\begin{lemma}\label{parameterextremal}Given $r_1,r_2\in \ZZ_+$ with $0\le r_1\le k_1$ and $0\le r_2\le n-2$, set $\mu=\tfrac{r_1}{2}\theta_1+\tfrac{r_2}{2}\theta_2$ and $\mathbb L(r_1,r_2)$ the $W^{k}_{\min}(D(2,1;a))$-module generated by $\Omega(r_1,r_2)$. 
Then, as $W^{k}_{\min}(D(2,1;a)$-module
$$\mathbb L(r_1,r_2)=L(\mu,A(k,\mu)).
$$
\end{lemma}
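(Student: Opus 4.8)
The plan is to show that $\Omega(r_1,r_2)$ is a highest weight vector whose $\h^\natural$-weight equals $\mu$ and whose minimal energy equals $A(k,\mu)$, and then to upgrade the resulting surjection $\mathbb L(r_1,r_2)\twoheadrightarrow L(\mu,A(k,\mu))$ to an isomorphism by an irreducibility argument. Since Lemma \ref{singularNS} already shows that $\Omega(r_1,r_2)$ is singular for $W^{k}_{\min}(D(2,1;a))$, the module $\mathbb L(r_1,r_2)$ is automatically a highest weight module, so only the two invariants $\mu$ and $\ell$ and the irreducibility remain to be pinned down.

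First I would compute the $\h^\natural=\langle J^0,J^{\prime 0}\rangle$-weight. Inverting \eqref{Jtildas} gives $J^0=\widetilde J^0+\tfrac1k(:\sigma^{--}\sigma^{++}:-:\sigma^{+-}\sigma^{-+}:)$ and $J^{\prime 0}=\widetilde J^{\prime 0}+\tfrac1k(:\sigma^{--}\sigma^{++}:+:\sigma^{+-}\sigma^{-+}:)$, with $\widetilde J^0=A^+(h)$ and $\widetilde J^{\prime 0}=-A^-(h)$ by \eqref{campi}. All the $\sigma^{\pm\pm}$ lie in $\ov{\fs}$, whose modes supercommute with the $\ov{\mathfrak f}_1$-fermions occurring in $\Omega(r_1,r_2)$ and with $v_\nu$; hence the zero modes of the quadratic $\sigma$-terms annihilate $\Omega(r_1,r_2)$. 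Using $A^+(h)=\phi_1(h)+\Theta_{\fs}(R_h)$ with $\phi_1(h)=h_\theta$, the current $\phi_1(h)_0$ acts on $v_\nu$ by $\langle\nu,h_\theta\rangle=r_1$ while $\Theta_{\fs}(R_h)_0$ kills $\Omega$; and using $A^-(h)=\Theta_{\mathfrak f_1}(\ad(\phi_1(h)))+\Theta_{\fs}(L_h)$ together with \eqref{thetaTv} and $[h_\theta,E_{n,j}]=-E_{n,j}$ for $2\le j\le n-1$, the operator $A^-(h)_0$ acts on the length-$r_2$ fermion string as the charge $-r_2$. This yields $J^0_0\Omega=r_1\Omega$ and $J^{\prime 0}_0\Omega=r_2\Omega$, i.e. the highest weight is $\mu=\tfrac{r_1}2\theta_1+\tfrac{r_2}2\theta_2$.

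Next I would compute the minimal energy $\ell=\hat L_0$-eigenvalue. Here I would use \eqref{LL} to write $\hat L=\widetilde L-L^\xi-L^{\sigma}$, where $L^\xi$ and $L^{\sigma}$ are the Virasoro fields of the decoupled boson $\xi$ and of the four free fermions $\sigma^{\pm\pm}$; since $\Omega(r_1,r_2)$ carries neither $\xi$- nor $\sigma$-oscillators, these subtracted stress tensors contribute only their constant vacuum value, so $\hat L_0\Omega$ reduces to the coset value $\widetilde L_0\Omega=L_0\Omega$ with $L=L^\g-L^\aa+L^{\ov\q}$ as in \eqref{L}. On $\Omega$ the three summands give, respectively, the Sugawara weight $\tfrac{(\nu,\nu+2\rho)}{2(k_1+n)}$ of $v_\nu$, the free-fermion weight $\tfrac{r_2}2$ of the $r_2$ modes $(\ov E_{n,j})_{-1/2}$, and the contribution of $L^\aa$, which I would evaluate from the weight of $\Omega$ under the currents $a^\aa$ of \eqref{xa} (here $\aa=sl(n-2)$: the vector $v_\nu$ is $\aa$-invariant and the fermion string carries an explicit $sl(n-2)$-weight). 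Assembling these contributions and simplifying should reproduce exactly $A(k,\mu)=\tfrac{2r_2(k_1+1)+2r_1(n-1)+(r_1-r_2)^2}{4(k_1+n)}$ of \eqref{1b}. Alternatively, one may avoid $L^\aa$ by evaluating the anticommutator $\{G^{++}_{1/2},G^{--}_{-1/2}\}\Omega$ through \eqref{GlambdaGbis}: since $G^{++}_{1/2}\Omega=0$ and all current zero-mode terms are fixed by the weights $(r_1,r_2)$ already found, the vanishing of the corresponding first-level $G$-descendant (the extremality condition, checkable in the realization exactly as in Lemma \ref{singularNS}) forces $k\ell$ to equal the current contribution, again giving $\ell=A(k,\mu)$. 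This energy bookkeeping is the main computational obstacle.

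Finally, to pass from a highest weight module to the irreducible $L(\mu,A(k,\mu))$ I would argue by unitarity of the ambient module. Because $0\le r_1\le k_1$ with $k_1\in\ZZ_{\ge0}$, the module $L(r_1\omega_1)$ is integrable, hence unitary, for $\widehat{sl}(n)$ at level $k_1$, and $F(\ov\q)$ is a unitary free-fermion module; thus $L(r_1\omega_1)\otimes F(\ov\q)$ is unitary, and its invariant Hermitian form restricts, via the homomorphism $\Gamma$ of \eqref{embedding}, to a positive-definite invariant form on $\mathbb L(r_1,r_2)$. A unitary module is completely reducible, while a highest weight module is indecomposable (its top weight space is one-dimensional); hence $\mathbb L(r_1,r_2)$, being a unitary highest weight module generated by the single lowest-energy vector $\Omega(r_1,r_2)$, is irreducible. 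Combined with the computed invariants this gives $\mathbb L(r_1,r_2)=L(\mu,A(k,\mu))$. The delicate points are the energy computation above and the compatibility of the $W$-algebra conjugate-linear anti-involution with the ambient one, which is what makes the restricted form a genuine unitary structure for $W^{k}_{\min}(D(2,1;a))$.
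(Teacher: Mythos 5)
Your overall structure (weight computation, energy computation, irreducibility via unitarity) is reasonable, and your computation of the $\h^\natural$-weight $(r_1,r_2)$ agrees with the paper. But there is a genuine error in the main computational step, the evaluation of $\hat L_0$. You claim that, since $\Omega(r_1,r_2)$ carries no $\xi$- or $\sigma$-oscillators, the subtracted stress tensors $L^\xi$ and $L^{\ov{\mathfrak s}}$ contribute only a vacuum constant, so that $\hat L_0\Omega=\widetilde L_0\Omega$. This is false for $L^\xi$: the boson $\xi$ is built from $d_1+\Theta_\q(ad(d_1))$ with $d_1\in\q$ a \emph{nonzero} weight direction for $\Omega(r_1,r_2)$. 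Indeed $(d_1)_0\Omega(r_1,r_2)=\sqrt{\tfrac{2-n}{n}}\bigl(r_1+\tfrac{n}{n-2}r_2\bigr)\Omega(r_1,r_2)$ for $n>2$ (the $v_\nu$ factor contributes $\nu(d_1)$ and the fermion string contributes through $\Theta_\q(ad(d_1))_0$), so $L^\xi_0=\tfrac{1}{2k}:\xi^2:_0$ acts on $\Omega(r_1,r_2)$ by the nonzero scalar $\tfrac{((n-2)r_1+nr_2)^2}{4n(n-2)(k_1+n)}$. Omitting this term leaves you with $\widetilde\ell_0=\tfrac{n-1}{2(k_1+n)}\bigl(\tfrac{r_1(n+r_1)}{n}-\tfrac{r_2(n-r_2-2)}{n-2}\bigr)+\tfrac{r_2}{2}$, which is \emph{not} equal to $A(k,\mu)$; the paper's proof gets $A(k,\mu)$ precisely by subtracting the $L^\xi_0$ eigenvalue (while $L^{\ov{\mathfrak s}}_0\Omega=0$ does hold in the Neveu--Schwarz sector).

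Your alternative route via $\{G^{++}_{1/2},G^{--}_{-1/2}\}$ and \eqref{GlambdaGbis} could in principle rescue the energy computation, but as stated it rests on the unverified assertion that $G^{--}_{-1/2}\Omega(r_1,r_2)=0$; Lemma \ref{singularNS} only establishes $G^{--}_{1/2}\Omega(r_1,r_2)=0$, and the level-$\tfrac12$ extremality condition is an additional computation of comparable length that you would have to carry out (and for which you would first need to identify which of the four $G$'s actually annihilates $\Omega$ at mode $-\tfrac12$). Finally, note that the paper's own proof of this lemma does not address irreducibility at all --- it only computes the parameters $(\mu,\ell)$ --- so your unitarity-plus-complete-reducibility argument is a supplement rather than a replacement; it is sound in outline but, as you acknowledge, it presupposes the $\omega$-invariance of the restricted Hermitian form, which the paper only establishes afterwards in Proposition \ref{extremalareunitary}.
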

\begin{proof}  To simplify notation we set $\nu=r_1\omega_1$ throughout the proof.

 We first prove that $\Omega(r_1,r_2)$ is singular for $ V^\a(\aa)$. If $x\in \aa$, then, by \eqref{thetaTv} and \cite[(2.49)]{DK},
$$
[\Theta_\q(ad(x))_m,\ov q_{-1/2}]=\ov{ad(x)(q)}_{m-1/2}.
$$
In particular, if $2\le i\le n-2$, 
$$
\begin{aligned}
(E_{i,i+1}^\aa)_0\cdot\Omega(r_1,r_2)&=v_{\nu}\otimes \Theta_\q(E_{i,i+1})_0\left((\ov{E}_{n,n-r_2})_{-1/2}\cdots (\ov{E}_{n,n-1})_{-1/2}\vac\right)\\
&=\sum_jv_{\nu}\otimes \left((\ov{E}_{n,n-r_2})_{-1/2}\cdots\ov{[E_{i,i+1},E_{n,j} ]}_{-1/2}\cdots(\ov{E}_{n,n-1})_{-1/2}\vac\right)=0.
\end{aligned}
$$ 
Moreover
$$
\begin{aligned}
(E_{n-1,2}^\aa)_1\cdot\Omega(r_1,r_2)&=v_{\nu}\otimes \Theta_\q(E_{n-1,2})_1\left((\ov{E}_{n,n-r_2})_{-1/2}\cdots (\ov{E}_{n,n-1})_{-1/2}\vac\right)\\
&=v_{\nu}\otimes \left((\ov{E}_{n,n-r_2})_{-1/2}\cdots\ov{[E_{n-1,2},E_{n,n-1} ]}_{1/2}\vac\right)=0.
\end{aligned}
$$ 
Thus $\Omega(r_1,r_2)$ is singular for $V^\a(\aa)$. We now check that $\Omega(r_1,r_2)$ is singular for $V^{k}(\C\xi)$.

Recall that we chose $\k$ so that $\k+\hvee=(k_1+n)(\cdot,\cdot)$. This implies that 
\begin{equation}\label{specialxi}
\xi=\tfrac{\sqrt{(k_1+1)(n-1)}}{\sqrt{2}(k_1+n)}\left(d_1+\Theta_\q(ad(d_1))\right).
\end{equation}
If $s>0$ and $n=2$,  then $r_2=0$ and
$$
(d_1)_s\Omega(r_1,0)=((d_1)_sv_{\nu})\otimes \vac=0.
$$
If $s>0$ and $n>2$, then
$$
\begin{aligned}
&(d_1)_s\Omega(r_1,r_2)=(d_1)_sv_{\nu}\otimes \left((\ov{E}_{n,n-r_2})_{-1/2}\cdots (\ov{E}_{n,n-1})_{-1/2}\vac\right)\\
&+v_{\nu}\otimes \Theta_\q(ad(d_1))_s\left((\ov{E}_{n,n-r_2})_{-1/2}\cdots (\ov{E}_{n,n-1})_{-1/2}\vac\right)\\
&=\sum_jv_{\nu}\otimes \left((\ov{E}_{n,n-r_2})_{-1/2}\cdots\ov{[d_1,E_{n,j} ]}_{s-1/2}\cdots(\ov{E}_{n,n-1})_{-1/2}\vac\right)\\
&=-\sqrt{\frac{n}{2-n}}\sum_jv_{\nu}\otimes \left((\ov{E}_{n,n-r_2})_{-1/2}\cdots(\ov E_{n,j} )_{s-1/2}\cdots(\ov{E}_{n,n-1})_{-1/2}\vac\right)=0.
\end{aligned}
$$

Next we compute the action of $\widetilde L_0$. Recall (cf. \eqref{L}) that $\widetilde \Gamma(\widetilde L)=L=L^\g-L^\aa+L^{\bar\q}$, so
$$ 
\begin{aligned}&\widetilde L_0\Omega(r_1,r_2)= (L^\g_0-L^{\mathfrak a}_0+L^{\overline{\mathfrak q}}_0)\Omega(r_1,r_2)=\\&\left(\frac{(\nu,\nu+2\rho)}{2(k_1+n)}-
\frac{(\nu^\aa_{r_1,r_2},\nu^\aa_{r_1,r_2}+2\rho^\aa)}{2(k_1+n)}+\frac{r_2}{2}\right)\Omega(r_1,r_2),
\end{aligned}
$$
where $\nu^\aa_{r_1,r_2}$ is the weight of $\Omega(r_1,r_2)$ under the action of $V^\a(\aa)$.
We compute $\nu^\aa_{r_1,r_2}$ explicitly. If $h\in\h_\aa$ then, since $\nu(h)=0$, 
$$
\begin{aligned}
&h^\aa_0\Omega(r_1,r_2)=v_{\nu}\otimes \Theta_\q(ad(h))_0\left((\ov{E}_{n,n-r_2})_{-1/2}\cdots (\ov{E}_{n,n-1})_{-1/2}\vac\right)\\
&=\sum_jv_{\nu}\otimes \left((\ov{E}_{n,n-r_2})_{-1/2}\cdots\ov{[h,E_{n,j} ]}_{-1/2}\cdots(\ov{E}_{n,n-1})_{-1/2}\vac\right)\\
&=\omega^\aa_{n-2-r_2}(h)\Omega(r_1,r_2).\end{aligned}
$$
Here, if $1\le i \le n-3$,  $\omega^\aa_i$ denotes the fundamental weight for $\aa$ w.r.t. the set of simple roots $\{\a_2,\ldots,\a_{n-2}\}$ while $\omega^\aa_{n-2}=\omega^\aa_{0}=0$.
The final outcome is that
\begin{equation} \label{tildeLomega}
\begin{aligned}&\widetilde L_0\Omega(r_1,r_2)= \widetilde \ell_0\Omega(r_1,r_2)\end{aligned}
\end{equation}
with
$$\widetilde\ell_0=\begin{cases}\frac{n-1}{2(k_1+n)}\left(\frac{ r_1 (n+r_1)}{n}-\frac{ r_2 (n-r_2-2)}{n-2}\right)+\frac{r_2}{2}&\text{if $n>2$}\\
\frac{n-1}{2(k_1+n)}\frac{ r_1 (n+r_1)}{n}&\text{if $n=2$.}
\end{cases}$$

Set  $L^{\xi}=\tfrac{1}{2k} : \xi^2 :$  and
$$L^{\ov{ \mathfrak s}}=\frac{1}{2k}(: \partial \sigma^{--}\sigma^{++}:
           +:\partial \sigma^{++} \sigma^{--}:
           +:\partial\sigma^{-+}\sigma^{+-}:
           +:\partial \sigma^{+-}\sigma^{-+}:).
           $$
           Note that $L^{\xi}$ is the Virasoro vector for $V^{k}(\C\xi)$ and that $L^{\ov{ \mathfrak s}}$ is the Virasoro vector for $F(\ov{\mathfrak s})$.
Recall from \eqref{LL} that 
$$
\hat L=\widetilde L- L^{\xi}-L^{\ov{ \mathfrak s}},
$$
so we need to compute the action of  $L^{\xi}_0$ and of $L^{\ov{ \mathfrak s}}_0$  
 on $\Omega(r_1,r_2)$.
    
    Recall the orthogonal decomposition $\q=\mathfrak s\oplus \mathfrak f_1$. It is clear that, if  $q\in\mathfrak f_1$, then       
$ [L^{\ov{ \mathfrak s}}  {}_\l q]= 0$.
Since $\ov E_{n,i}\in\mathfrak f_1$ for $2\le i\le n-1$, we see that
$[L^{\ov{ \mathfrak s}}_0, (\ov E_{n,i})_{-1/2}]=0$, so 
$$
L^{\ov{ \mathfrak s}}_0\Omega(r_1,r_2)=0.
$$
  Since $\Omega(r_1,r_2)$ is singular for $V^{k}(\C\xi)$, 
  $$
  L^{\xi}_0\Omega(r_1,r_2)=\frac{1}{2k}\xi_0^2\Omega(r_1,r_2).
  $$
Since
$$
\begin{aligned}
&(d_1)_0\Omega(r_1,r_2)=\\&(d_1)_0v_{\nu}\otimes \left((\ov{E}_{n,n-r_2})_{-1/2}\cdots (\ov{E}_{n,n-1})_{-1/2}\vac\right)\\
&+v_{\nu}\otimes \Theta_\q(ad(d_1))_0\left((\ov{E}_{n,n-r_2})_{-1/2}\cdots (\ov{E}_{n,n-1})_{-1/2}\vac\right)\\
&=\nu(d_1)\Omega(r_1,r_2)+\sum_jv_{\nu}\otimes \left((\ov{E}_{n,n-r_2})_{-1/2}\cdots\ov{[d_1,E_{n,j} ]}_{-1/2}\cdots(\ov{E}_{n,n-1})_{-1/2}\vac\right)\\
&=\begin{cases}(\nu(d_1)+\sqrt{\frac{2-n}{n}}(\frac{n}{n-2}r_2)\Omega(r_1,r_2)&\text{if $n>2$},\\
\nu(d_1)&\text{if $n=2$,}
\end{cases}
\end{aligned}
$$
and
$\nu(d_1)=\sqrt{\frac{2-n}{n}}r_1$, the final result is that
$$
(d_1)_0\Omega(r_1,r_2)=\begin{cases}\sqrt{\frac{2-n}{n}}(r_1+\frac{n}{n-2}r_2)\Omega(r_1,r_2)&\text{if $n>2$},\\
0&\text{if $n=2$,}
\end{cases}
$$
so 
$$
\xi_0^2\Omega(r_1,r_2)=\begin{cases}\frac{ (k_1+1) (n-1)((n-2)r_1+nr_2)^2}{2(k_1+n)^2n(2-n)}\Omega(r_1,r_2)&\text{if $n>2$},\\
0&\text{if $n=2$},
\end{cases}
$$
and
$$
L^\xi_0\Omega(r_1,r_2)=\begin{cases}\frac{ ((n-2)r_1+nr_2)^2}{4n(k_1+n)(n-2)}\Omega(r_1,r_2)&\text{if $n>2$},\\
0&\text{if $n=2$}.
\end{cases}
$$
Thus, if $n>2$,
$
\hat L_0\Omega(r_1,r_2)=\ell_0\Omega(r_1,r_2)
$
with
$$
\ell_0=\widetilde\ell_0-\frac{ ((n-2)r_1+nr_2)^2}{4n(k_1+n)(n-2)}=\frac{2r_2(k_1+1)+2 r_1 (n-1)+(r_1-r_2)^2}{4 (k_1+n)}=A(k,\mu).
$$
If $n=2$ then, since $r_2=0$, 
$$
\ell_0=\widetilde\ell_0=\frac{1}{2(k_1+2)}\frac{ r_1 (2+r_1)}{2}=\frac{2 r_1+r_1^2}{4 (k_1+2)}=A(k,\mu)
$$
as well.

It remains to compute the action of $J^0_0$ and $J^{'0}_0$ on $\Omega(r_1,r_2)$. 
Recall that $\tilde J^{'0}_0=-A^-(h)_0=\sqrt{-1}\Theta_\q(J)_0$. Since, for $2\le j \le n-1$, $E_{n,j}\in\mathfrak f_1$, we have $J(E_{n,j})=[J_1,E_{n,j}]=-\sqrt{-1}E_{n,j}$, so
$$
\begin{aligned}
&\widetilde J^{'0}_0\Omega(r_1,r_2)=\sqrt{-1}v_{\nu}\otimes \Theta_\q(J)_0\left((\ov{E}_{n,n-r_2})_{-1/2}\cdots(\ov E_{n,j} )_{-1/2}\cdots(\ov{E}_{n,n-1})_{-1/2}\vac\right)\\
&=\sqrt{-1}\sum_jv_{\nu}\otimes 
\left((\ov{E}_{n,n-r_2})_{-1/2}\cdots\ov{[J_1,E_{n,j} ]}_{-1/2}\cdots(\ov{E}_{n,n-1})_{-1/2}\vac\right)\\
&=\sum_jv_{\nu}\otimes 
\left((\ov{E}_{n,n-r_2})_{-1/2}\cdots(\ov{E}_{n,j} )_{-1/2}\cdots(\ov{E}_{n,n-1})_{-1/2}\vac\right)=r_2\Omega(r_1,r_2).
\end{aligned}
$$
We already observed that, since $\s^{\pm\pm}\in\ov{\mathfrak s}$ and $(\mathfrak s,\mathfrak f_1)=0$, $[(\s^{\pm\pm})_\l\mathfrak f_1]=0$ hence
$$J^{'0}_0\Omega(r_1,r_2)=\widetilde J^{'0}_0\Omega(r_1,r_2)=r_2\Omega(r_1,r_2),
$$ 
as wished. By the same observation, $J^{0}_0\Omega(r_1,r_2)=\widetilde J^{0}_0\Omega(r_1,r_2)$ and, since $\widetilde J^0= A^+(h)=-\sqrt{-1}A^+(J)=-\sqrt{-1}(J_1+\Theta_{\mathfrak s}(ad(J_1)-J))$, we see that
$$
\begin{aligned}
&\widetilde J^{0}_0\Omega(r_1,r_2)=-\sqrt{-1}(J_1)_0v_{\nu}\otimes \left((\ov{E}_{n,n-r_2})_{-1/2}\cdots(\ov{E}_{n,n-1})_{-1/2}\vac\right)=r_1\Omega(r_1,r_2).
\end{aligned}
$$
\end{proof}

 Let $\overset{\circ}{\omega}:\g_1\to \g_1$ be the conjugate linear involution defined by $x\mapsto -\ov x^T$. (Here $\ov x$ means complex conjugation and $x^T$ is the transpose of $x$). If $n=2$, we extend $\overset{\circ}{\omega}$ to $\g$ by setting 
 $$
 \overset{\circ}{\omega}(\begin{pmatrix}1&0\\0&1\end{pmatrix})= \begin{pmatrix}1&0\\0&1\end{pmatrix}.
 $$ Since $\overset{\circ}{\omega}$ is a conjugate linear Lie algebra homomorphism of $\g$ and $\k(\overset{\circ}{\omega}(x),\overset{\circ}{\omega}(y))=\ov{\k(x,y)}$, the involution $\overset{\circ}{\omega}$ extends to define a conjugate linear involution of $V^\k(\g)$ still denoted by $\overset{\circ}{\omega}$.
 
 If $M$ is a positive energy module $M$ for $V^\k(\g)$,  a $\overset{\circ}{\omega}$-invariant form on $M$ is a Hermitian form $H(\cdot,\cdot)$ on $M$ such that, for all $a\in V^\k(\g)$, 
$$
H(m_1,Y_M(a, z)m_2)=H(Y_M(A(z)a, z^{-1})m_1,m_2),\quad m_1,m_2\in M,
$$
where  $A(z):V^\k(\g)\to V^\k(\g)((z))$ is defined by 
$$
A(z)v=e^{zL^\g_1} (- z^{-2})^{L^\g_0}\overset{\circ}{\omega}(a),\quad a\in V^\k(\g).
$$
It is well known that, if $r_1\in\ZZ_+$ with $r_1\le k_1$, then $L(r_1\omega_1)$ is a unitary representation of $V^\k(\g)$. This means that there is a positive definite $\overset{\circ}{\omega}$-invariant  Hermitian form $H_{r_1}(\cdot,\cdot)$ on $L(r_1\omega_1)$.  Moreover this form is unique provided that we normalize it by setting $H_{r_1}(v_{r_1\omega_1},v_{r_1\omega_1})=1$. See \cite{KMP} for more details.

Since $\q$ is $\overset{\circ}{\omega}$-stable and $(\overset{\circ}{\omega}(q_1),\overset{\circ}{\omega}(q_2))=\ov{(q_1,q_2)}$, we can extend  $\overset{\circ}{\omega}$ to define a conjugate linear involution $\omega_{\ov\q}$ on $F(\ov\q)$. Moreover, since $(\cdot,\cdot)$ is positive definite on the set 
$
\{q\in\q\mid \overset{\circ}{\omega}(q)=-q\}
$,
 there exists a  unique positive definite Hermitian  $\omega_{\ov\q}$-invariant form $H_{\ov\q}(\cdot,\cdot)$ on $F(\ov\q)$ such that $H_{\ov\q}(\vac,\vac)=1$ (see \cite[5.1]{KMP}).
This means that $F(\ov\q)$ is a unitary representation of $F(\ov\q)$.

\begin{lemma} Set $\widetilde W=\Gamma(W^{\min}_{k}(D(2,1;a)))\subset V^\k(\g)\otimes F(\ov\q)$.
Then
$$(\overset{\circ}{\omega}\otimes \omega_{\ov\q})(\widetilde W)=\widetilde W.
$$
\end{lemma}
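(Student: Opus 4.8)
The map $\tau:=\overset{\circ}{\omega}\otimes\omega_{\ov\q}$ is a conjugate-linear vertex algebra automorphism of $V^\k(\g)\otimes F(\ov\q)$, being the tensor product of the conjugate-linear automorphisms $\overset{\circ}{\omega}$ of $V^\k(\g)$ and $\omega_{\ov\q}$ of $F(\ov\q)$. Hence $\tau(\widetilde W)$ is again a vertex subalgebra, and since $\widetilde W$ is generated by $\Gamma(J^0),\Gamma(J^{\pm}),\Gamma(J^{\prime 0}),\Gamma(J^{\prime\pm}),\Gamma(G^{\pm\pm})$ and $\Gamma(\hat L)$, it suffices to prove that $\tau$ maps each of these into $\widetilde W$. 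The plan is to compute the action of $\tau$ on the generating fields \eqref{campi} of $\widetilde\Gamma(V^{N=4}_{\tilde c,a})$ together with $L$, and then to feed the result into the expressions \eqref{Jtildas}--\eqref{LL} for the $W$-generators.

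First I would record how $\overset{\circ}{\omega}$ interacts with the Joyce data. From $\overset{\circ}{\omega}(x)=-\ov x^T$ one checks that $J_1,K_1,(JK)_1$ and $d_1$ are fixed by $\overset{\circ}{\omega}$, that $\overset{\circ}{\omega}$ preserves $\aa$ and $\q$, and that $\overset{\circ}{\omega}\circ\phi_1=\phi_1\circ\eta$ with $\eta(x)=-\ov x^T$ on $sl(2)$, so $\eta(h)=-h$, $\eta(e)=-f$, $\eta(f)=-e$. Because $J,K,JK$ act on $\mathfrak f_1$ as $\ad$ of a fixed element and on $\mathfrak s$ as $\phi_1 L_J\phi_1^{-1}$ (etc.), the fixedness of $J_1,K_1,(JK)_1$ gives that $\overset{\circ}{\omega}$ commutes with $J,K,JK$ on $\q$; consequently $\tau$ commutes with the induced automorphisms $J,K,JK$ of $V^\k(\g)\otimes F(\ov\q)$. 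Two further facts I would use are the change-of-basis identity $\tau(\Theta_\q(T))=\Theta_\q(\overset{\circ}{\omega}T\overset{\circ}{\omega}^{-1})$ for $T\in so(\q)$, and the invariance of any intrinsically defined contraction (such as $\widetilde G$ or $L$) under replacing the chosen dual bases by their $\overset{\circ}{\omega}$-images, which again form dual bases since $\overset{\circ}{\omega}$ is an anti-isometry.

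These yield the transformation rules. Using $A^+(x)=\phi_1(x)+\Theta_{\mathfrak s}(R_x)$, $A^-(x)=\Theta_\q(\psi(x))$, the identity $\overset{\circ}{\omega}R_x\overset{\circ}{\omega}^{-1}=R_{\eta(x)}$ (which amounts to $\eta(\eta(a)x)=a\,\eta(x)$) and $\overset{\circ}{\omega}\psi(x)\overset{\circ}{\omega}^{-1}=\psi(\eta(x))$, one gets $\tau(A^\pm(x))=A^\pm(\eta(x))$, whence $\tau(\widetilde J^0)=-\widetilde J^0$, $\tau(\widetilde J^{\pm})=-\widetilde J^{\mp}$ and likewise $\tau(\widetilde J^{\prime 0})=-\widetilde J^{\prime 0}$, $\tau(\widetilde J^{\prime\pm})=-\widetilde J^{\prime\mp}$. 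Basis-invariance of the cubic Dirac operator and of the Sugawara and fermionic Virasoro vectors gives $\tau(\widetilde G)=\widetilde G$, $\tau(\widetilde G_J)=\widetilde G_J$ (and the same for $K,JK$, using $\tau J=J\tau$), and $\tau(L)=L$, so $\tau$ interchanges $\widetilde G^{++}\leftrightarrow\widetilde G^{--}$ and $\widetilde G^{-+}\leftrightarrow\widetilde G^{+-}$. Finally $\tau(\xi)=\xi$, while $\overset{\circ}{\omega}(\phi_1(h_2))=\phi_1(h_1)$ and $\overset{\circ}{\omega}(\phi_1(e))=-\phi_1(f)$ give $\tau(\sigma^{++})=\tfrac1a\sigma^{--}$, $\tau(\sigma^{+-})=\tfrac1a\sigma^{-+}$ and the reciprocal relations. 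In particular $\tau$ preserves $\widetilde\Gamma(V^{k}(\C\xi)\otimes F(\C^4_{\bar1}))$.

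To conclude I would substitute these rules into \eqref{Jtildas}--\eqref{LL}: since $\tau$ is a vertex algebra homomorphism it commutes with normally ordered products, so each $\tau(\Gamma(X))$ is again the same normally ordered polynomial in the transformed fields, which simplifies back into $\widetilde W$; for instance $\tau(\Gamma(J^0))=-\Gamma(J^0)$. Conceptually, the rules show that $\tau$ descends through $\widetilde\Gamma$ to a conjugate-linear automorphism of the big $N=4$ vertex algebra fixing the free-field factor $V^{k}(\C\xi)\otimes F(\C^4_{\bar1})$ of \eqref{WN=3embedding} setwise; as $k\ne0$ this factor has trivial center, so its commutant, namely $W^{k}_{\min}(D(2,1;a))$, is preserved, which is exactly the assertion. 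The main obstacle is the sign-and-scalar bookkeeping of the previous paragraph, above all establishing $\tau(A^\pm(x))=A^\pm(\eta(x))$ and checking that $\overset{\circ}{\omega}$ commutes (rather than anticommutes) with the complex structures $J,K,JK$; once these are secured, the remaining substitutions are mechanical.
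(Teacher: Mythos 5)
Your argument is correct and follows essentially the same route as the paper: show that $\overset{\circ}{\omega}\otimes\omega_{\ov\q}$ commutes with $J$ and $K$ and therefore fixes $\widetilde G,\widetilde G_J,\widetilde G_K,\widetilde G_{JK}$ and permutes the $\widetilde G^{\pm\pm}$, determine its action on $\xi$, the $\sigma$'s and the $\widetilde J$'s, and feed the result into \eqref{Jtildas}--\eqref{LL}. The only differences are cosmetic: you compute $\tau(\widetilde J^s)$, $\tau(\widetilde J^{\prime s})$ directly from $\tau(A^\pm(x))=A^\pm(\eta(x))$ where the paper instead deduces the stability of their span from the brackets \eqref{terze}--\eqref{quarte}, and your scalars $\tau(\sigma^{++})=a^{-1}\sigma^{--}$, $\tau(\sigma^{--})=a\sigma^{++}$ are the ones forced by $\tau^2=\mathrm{id}$ (the paper's stated constants appear to have $a$ and $a^{-1}$ interchanged, which does not affect the conclusion).
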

\begin{proof}
Since $\q$ is $\overset{\circ}{\omega}$-stable, it is clear that $(\overset{\circ}{\omega}\otimes \omega_{\ov\q})(\widetilde G)=\widetilde G$. Let $\mathfrak s_\R$ be the real span of $d_1,J_1,K_1,J_1K_1$. Since $\overset{\circ}{\omega}$ fixes pointwise  $\mathfrak s_\R$ and since $J,K$ stabilize $\mathfrak s_\R$, we see that $\overset{\circ}{\omega}J=J\overset{\circ}{\omega}$ and $\overset{\circ}{\omega}K=K\overset{\circ}{\omega}$. This implies that $(\overset{\circ}{\omega}\otimes \omega_{\ov\q})(\widetilde G_J)=\widetilde G_J$ hence $(\overset{\circ}{\omega}\otimes \omega_{\ov\q})(\widetilde G^{++})=\widetilde G^{--}$. Similarly one checks that  $(\overset{\circ}{\omega}\otimes \omega_{\ov\q})(\widetilde G^{+-})=\widetilde G^{-+}$.
From the explicit expressions given in \eqref{campi} one sees that $(\overset{\circ}{\omega}\otimes \omega_{\ov\q})(\xi)=\xi$, $(\overset{\circ}{\omega}\otimes \omega_{\ov\q})(\s^{++})=a\s^{--}$, and $(\overset{\circ}{\omega}\otimes \omega_{\ov\q})(\s^{+-})=a\s^{-+}$.
The relations in \eqref{terze} now imply that the linear span of  $\{\widetilde J^s, \widetilde J^{'s}\mid s\in\{0,+,-\}\}$ is $\overset{\circ}{\omega}\otimes \omega_{\ov\q}$-stable. Finally, from \eqref{quarte}, one obtains
$$
 (\overset{\circ}{\omega}\otimes \omega_{\ov\q})([{\tilde{G}^{+ +}}{}_{\lambda}
         \widetilde{G}^{- -}] )=[{\tilde{G}^{--}}{}_{\lambda}
         \widetilde{G}^{++}]=
        \widetilde{L} - \frac{1}{{ 2}(a+1)} (\partial +2\lambda)
         (\widetilde{J}^0 +a\tilde{J}^{\prime 0}) +
         \frac{\lambda^2}{6}\tilde{c}\vac\, ,
$$
which in turn implies
$$
 (\overset{\circ}{\omega}\otimes \omega_{\ov\q})(\tilde{J}^0 +a\tilde{J}^{\prime 0})=-\tilde{J}^0 -a\tilde{J}^{\prime 0},
 $$
 so  $(\overset{\circ}{\omega}\otimes \omega_{\ov\q})(\widetilde L)=\widetilde L$.
 
 So far we have shown that $\widetilde\Gamma(V^{N=4}_{\tilde c,a})$ as well as $V^k(\C\xi)$ and $F(\C_{\bar1}^4)$ are $\overset{\circ}{\omega}\otimes \omega_{\ov\q}$-stable, thus \eqref{Jtildas}, \eqref{GtildaandG}, \eqref{LL} imply that $\widetilde W$ is also $\overset{\circ}{\omega}\otimes \omega_{\ov\q}$-stable.
\end{proof}

Set $\omega=(\overset{\circ}{\omega}\otimes \omega_{\ov\q})_{|\widetilde W}$. Since $\widetilde W$ is a non-collapsing quotient of $W^{k}_{\min}(D(2,1;a))$, by Proposition 7.2 of \cite{KMP1}, the involution $\omega$ is induced by a conjugate linear involution   of $W^{k}_{\min}(D(2,1;a))$ (that we still denote by $\omega$) such that $\Gamma\circ\omega=\omega\circ \Gamma$.

The main result of this section is the following.

\begin{prop}\label{extremalareunitary}Set $a=\frac{k_1+1}{n-1}$ and $k=-\frac{(k_1+1)(n-1)}{k_1+n}$. Fix $0\le r_1\le k_1$ and $0\le r_2\le n-2$. 
Then $(H_{r_1}\otimes H_{\ov\q})_{|\mathbb L(r_1,r_2)\times\mathbb L(r_1,r_2)}$  is a positive definite $\omega$-invariant form on $\mathbb L(r_1,r_2)$.
In particular, if $\mu=\tfrac{r_1}{2}\theta_1+\tfrac{r_2}{2}\theta_2$, the highest weight $W^{\min}_{k}(D(2,1;a))$-module $L(\mu,A(k,\mu))$ is unitary.
\end{prop}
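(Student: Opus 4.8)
The plan is to produce the positive definite $\omega$-invariant form on $\mathbb L(r_1,r_2)$ by restricting the tensor-product form $H_{r_1}\otimes H_{\ov\q}$ from the ambient module $L(r_1\omega_1)\otimes F(\ov\q)$, and then to identify $\mathbb L(r_1,r_2)$ with $L(\mu,A(k,\mu))$ via Lemma~\ref{parameterextremal}. The argument therefore has three ingredients: positivity of the tensor-product form, its $\omega$-invariance for the $W^{k}_{\min}(D(2,1;a))$-action realized through $\Gamma$, and the module identification.

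First I would record positivity. The form $H_{r_1}\otimes H_{\ov\q}$ is positive definite on $L(r_1\omega_1)\otimes F(\ov\q)$, since $H_{r_1}$ is positive definite (because $L(r_1\omega_1)$ is a unitary $V^\k(\g)$-module for $0\le r_1\le k_1$) and $H_{\ov\q}$ is positive definite (because $F(\ov\q)$ is unitary over itself). Restricting a positive definite form to any subspace preserves positivity, so once invariance is in place the restriction to the submodule $\mathbb L(r_1,r_2)$ is automatically positive definite.

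The substance is the invariance, and this is where I expect the main obstacle to lie. The form $H_{r_1}\otimes H_{\ov\q}$ is $(\overset{\circ}{\omega}\otimes\omega_{\ov\q})$-invariant on $L(r_1\omega_1)\otimes F(\ov\q)$ as a module over $V^\k(\g)\otimes F(\ov\q)$ with respect to the total conformal vector $L^\g+L^{\overline{\mathfrak q}}$, being a tensor product of invariant forms. By the preceding lemma the subalgebra $\widetilde W=\Gamma(W^{k}_{\min}(D(2,1;a)))$ is stable under $\overset{\circ}{\omega}\otimes\omega_{\ov\q}$, and by construction $\Gamma\circ\omega=(\overset{\circ}{\omega}\otimes\omega_{\ov\q})\circ\Gamma$. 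The delicate point is that the invariance identity involves the operator $A(z)$ built from $L_0$ and $L_1$, and for the $W$-module structure these must be the modes of $\Gamma(\hat L)$ rather than of the ambient $L^\g+L^{\overline{\mathfrak q}}$. I would resolve this by noting that the difference $L^\g+L^{\overline{\mathfrak q}}-\Gamma(\hat L)$ equals $L^{\mathfrak a}+\widetilde\Gamma(L^\xi)+\widetilde\Gamma(L^{\overline{\mathfrak s}})$, the conformal vector of the commutant of $\widetilde W$ arising from the factors $V^\a(\aa)$, $V^{k}(\C\xi)$ and $F(\C^4_{\bar1})$ of the decomposition~\eqref{WN=3embedding}. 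Since $\widetilde W$ commutes with this commutant, the $L_0$ and $L_1$ attached to this difference annihilate $\widetilde W$; hence on $\widetilde W$ the operators $L_0$ and $L_1$ of $L^\g+L^{\overline{\mathfrak q}}$ coincide with those of $\Gamma(\hat L)$, and $A(z)$ agrees on $\widetilde W$ whichever Virasoro vector is used. Consequently $H_{r_1}\otimes H_{\ov\q}$ is $\omega$-invariant for the $W^{k}_{\min}(D(2,1;a))$-module structure.

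Finally I would restrict the form to the submodule $\mathbb L(r_1,r_2)\subset L(r_1\omega_1)\otimes F(\ov\q)$, where it stays positive definite and $\omega$-invariant, and then apply Lemma~\ref{parameterextremal}, which identifies $\mathbb L(r_1,r_2)$ with $L(\mu,A(k,\mu))$ as a $W^{k}_{\min}(D(2,1;a))$-module, where $\mu=\tfrac{r_1}{2}\theta_1+\tfrac{r_2}{2}\theta_2$. This exhibits a positive definite $\omega$-invariant form on $L(\mu,A(k,\mu))$, establishing its unitarity.
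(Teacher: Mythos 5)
Your proposal is correct and follows essentially the same route as the paper: positivity by restricting $H_{r_1}\otimes H_{\ov\q}$, $\omega$-invariance by showing that the modes $L_0,L_1$ of the ambient conformal vector $L^\g+L^{\overline{\mathfrak q}}$ and of $\Gamma(\hat L)$ agree on $\widetilde W$, and the module identification via Lemma~\ref{parameterextremal}. The only place where you are thinner than the paper is the assertion that $L^{\mathfrak a}+\widetilde\Gamma(L^\xi)+\widetilde\Gamma(L^{\overline{\mathfrak s}})$ lies in the commutant of $\widetilde W$: the decomposition \eqref{WN=3embedding} only yields $[(L^\xi+L^{\overline{\mathfrak s}})_\lambda\widetilde W]=0$, whereas the commutation of $L^{\mathfrak a}$ with $\widetilde W$ is the coset property and needs a separate check --- the paper reduces it, using that $V^{N=4}_{\tilde c,a}$ is generated by $\widetilde G^{\pm\pm}$ and that $x+\Theta_{\mathfrak q}(ad(x))$ is fixed by $J$ and $K$, to the cubic Dirac operator identity \eqref{GU} giving $[(x+\Theta_{\mathfrak q}(ad(x)))_\lambda\widetilde G]=0$ for $x\in\mathfrak a$.
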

\begin{proof}
We already observed that $(H_{r_1}\otimes H_{\ov\q})$ is a positive definite $\overset{\circ}{\omega}\otimes \omega_{\ov\q}$-invariant
Hermitian form on $L(r_1\omega_1)\otimes F(\ov\q)$, so, to complete the proof, we need only to check that $(H_{r_1}\otimes H_{\ov\q})_{|\mathbb L(r_1,r_2)\times\mathbb L(r_1,r_2)}$ is $\omega$-invariant.

Let $U$ be the vertex subalgebra of $V^\k(\g)\otimes F(\ov \q)$ generated  by 
$$
\aa'=\langle x+\Theta_\q(ad(x))\mid x\in\aa\oplus\C\xi\rangle
$$
and by $\ov{\mathfrak s}$.
Note that $\omega_U=L^\aa+L^\xi+L^{\ov{\mathfrak s}}$ is a Virasoro vector for $U$ and that the generating fields in $\aa'$ are primary of conformal weight 1, while the generators in $\ov{\mathfrak s}$ are primary of conformal weight $\half$. 

Since the generators in $\aa'$ are primary of conformal weight 1 and the generators in $\ov{\mathfrak s}$ are primary of conformal weight $\half$  also for $L^\g+L^{\ov \q}$, we see that 
$$
[\hat L_\la U]=[((L^\g+L^{\ov \q})-(L^\aa+L^\xi+L^{\ov{\mathfrak s}}))_\la U]=0.
$$
This implies that $[\hat L_1,L^\aa_1+L^\xi_1+L^{\ov{\mathfrak s}}_1]=0$, so 
$$
e^{z\hat L_1}=e^{z(L^\g_1+L^{\ov q}_1)}e^{-z(L^\aa_1+L^\xi_1+L^{\ov{\mathfrak s}}_1)},
$$
and, in order to verify the $\omega$-invariance of $(H_{r_1}\otimes H_{\ov\q})$, we need only to verify that
$$(L^\aa_1+L^\xi_1+L^{\ov{\mathfrak s}}_1)w=0,\ w\in \widetilde W.
$$
By \eqref{WN=3embedding}, $[(L^\xi+L^{\ov{\mathfrak s}})_\lambda \widetilde W]=0$, so we are reduced to checking that
$L^\aa_1w=0$ if $w\in \widetilde W$. For this, it is enough to show that $[(x+\Theta_\q(ad(x)))_\lambda \widetilde \Gamma(V^{N=4}_{\tilde c,a})]=0$ for $x\in \aa$.

Since $V^{N=4}_{\tilde c,a}$ is generated as a vertex algebra by $\widetilde G^{\pm\pm}$ it suffices to prove that $[(x+\Theta_\q(ad(x))_\la \widetilde G^{\pm\pm}]=0$ for all $x\in\aa$. Since $J(x+\Theta_\q(ad(x)))=K(x+\Theta_\q(ad(x)))=x+\Theta_\q(ad(x))$ , we need only to check that $[(x+\Theta_\q(ad(x))_\la \widetilde G]=0$ for all $x\in\aa$, but this follows from \eqref{GU}, since $x+\Theta_\q(ad(x))=\tilde x-\Theta_\aa(ad(x))$ and $\widetilde G=\frac{1}{2\sqrt{k_1+n}}(G_\g-G_\aa)$.
\end{proof}

\begin{remark}Fix  $a=\frac{p}{q}$ with $p,q\in\nat$ relatively prime and set $k=-r\frac{pq}{p+q}$, with $r\in\nat$. Let $n_1=rp-1$ and $n_2=rq+1$.
By Lemma \ref{parameterextremal}, if $r_1=n_1-1$ or $r_2=n_2-2$, then the representations $\mathbb L(r_1,r_2)$ realize the extremal representations of $W^{\min}_k(D(2,1;a))$  \cite{KMP1}. Thus Proposition \ref{extremalareunitary} confirms \cite[Conjecture 2]{KMP1} and completes the classification of unitary representations for the case of $W^{k}_{\min}(D(2,1;a))$ in the Neveu-Schwarz sector.
\end{remark}
\section{Application to unitarity  of massless representations in the Ramond sector}
As in the previous Section, we restrict here to $\g$ either $gl(2)$ or $sl(n)$ with $n>2$ and, if $n=2$, we set $k_0=k_1+2$ so that $\k+\hvee=(k_1+2)(\cdot,\cdot)$.

Let $\s_R$ be the automorphism of $W_k^{\min}(D(2,1;a))$ defined by $\s_R(b)=(-1)^{p(b)}b$. Here $p(b)$ is the parity of $b$. 
If $M$ is a $\s_R$-twisted module and $a\in W^k_{\min}(D(2,1;a))$, we write
$$
Y_M(b,z)=\sum_{n\in\ZZ} b_nz^{-n-\D_b}
$$
for the corresponding field. 
Similarly $\s_R$ defines an involution of $V^\k(\g)\otimes F(\ov\q)$ (cf \eqref{q}; below we use notation from Section 5). It is clear from the  definition of the map $\Gamma$ in \eqref{embedding} that $\Gamma\circ \s_R=\s_R\circ \Gamma$, so, if $M$ is a $\s_R$-twisted $V^\k(\g)\otimes F(\ov\q)$-module, it is a $\s_R$-twisted $W^k_{\min}(D(2,1;a))$-module as well.

We follow the construction of twisted modules for $W$-algebras given in \cite{KW}. This construction is detailed in \cite{KMPR} for the special case we are currently studying.
As explained in Example 2.2 of \cite{KMPR}, by picking a maximal isotropic subspace $U^+$ of $\q$, we can construct a  $\s_R$-twisted module $F(\ov\q,U^+)$ of $F(\ov\q)$ as the Clifford module $Cl(\q)/(Cl(\q))^+$ of a certain Clifford algebra $Cl(\q)$.
In the case at hand, with notation as in \eqref{varie}, we choose
$$
U^{+}=span(\{E_{1,j}\mid 2\le j\le n-1\}\cup\{E_{n,j}\mid 2\le j\le n-1\}\cup\{\phi_1(h_2),\phi_1(f)\}).
$$
Set also
$$
U^{-}=span(\{E_{j,1}\mid 2\le j\le n-1\}\cup\{E_{j,n}\mid 2\le j\le n-1\}\cup\{\phi_1(h_1),\phi_1(e)\}).
$$
Let $\mathbf 1$ be the projection  of $1\in Cl(\q)$ onto the Clifford module  $F(\ov\q,U^{+})$.
\begin{lemma}\label{theta1} With notation as in Proposition 	\ref{Apamdef}, we have
$$
A^-(f)_0\cdot\mathbf 1=0,\quad A^+(e)_0\cdot \mathbf 1=0,\ 
A^-(h)_0\cdot\mathbf 1=-\vacc,\quad A^+(h)_0\cdot \mathbf 1=0,
$$
and
$$
\Theta_\q(ad(E_{i+1,i}))_0\vacc=0.
$$
\end{lemma}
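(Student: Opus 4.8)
The plan is to reduce every assertion to a computation with the zero modes of the Clifford module $F(\ov\q,U^+)$ on its cyclic vector $\vacc$, exploiting two elementary facts. First, every strictly positive mode $\ov q_m$ ($m>0$) annihilates $\vacc$, and $\ov q_0\vacc=0$ whenever $q\in U^+$. Second, for a current $a\in\g$ the mode $a_{(0)}$ of the conformal-weight-one field $a(z)$ annihilates the vacuum of $V^\k(\g)$. The operators in the statement fall into two classes: the purely fermionic ones $A^-(x)=\Theta_\q(\psi(x))$ and $\Theta_\q(\ad(E_{i+1,i}))$ (note $E_{i+1,i}\in\aa$, so $\ad(E_{i+1,i})$ preserves $\q$), and the mixed ones $A^+(x)=\phi_1(x)+\Theta_{\mathfrak s}(R_x)$, whose $\g$-part is a nonnegative current mode that will simply drop out.

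The key device I would isolate first is a \emph{spinor trace formula}: if $V\in\{\q,\mathfrak s\}$ and $T\in so(V,(\cdot,\cdot))$ stabilizes the maximal isotropic subspace $U^+\cap V$, then
$$
\Theta_V(T)_0\,\vacc=\tfrac12\,\mathrm{tr}\bigl(T|_{U^+\cap V}\bigr)\,\vacc .
$$
I would prove this by writing $\Theta_V(T)=\tfrac12\sum_i:\ov{T(v_i)}\ov{v^i}:$ in a basis adapted to the polarization $V=(U^-\cap V)\oplus(U^+\cap V)$, discarding all positive modes on $\vacc$, and evaluating the surviving zero-mode part. The normalization $\tfrac12\mathrm{tr}$ is pinned down by the tracelessness of the spinor module of $so(V)$ (summing the $\Theta_V(T)_0$-eigenvalues over $\bigwedge(U^-\cap V)\vacc$), using $[\Theta_V(T)_\la\ov v]=\ov{T(v)}$ together with $\ov u_0\vacc=0$ for $u\in U^+\cap V$. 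A short check shows that the strictly-triangular (nilradical) parts of $T$ kill $\vacc$, so only the trace survives.

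With this in hand each item becomes a finite-dimensional verification. For $A^-(h)$ I would write $h=-\sqrt{-1}\,\sigma_J$, so $A^-(h)=-\sqrt{-1}\,\Theta_\q(J)$; since $J$ is diagonal on the $U^+$-basis with eigenvalue $+\sqrt{-1}$ on the $n-2$ vectors $E_{1,j}$ and $-\sqrt{-1}$ on the remaining $n$ vectors $E_{n,j},\phi_1(h_2),\phi_1(f)$, one gets $\mathrm{tr}(J|_{U^+})=-2\sqrt{-1}$, hence $\Theta_\q(J)_0\vacc=-\sqrt{-1}\,\vacc$ and $A^-(h)_0\vacc=-\vacc$. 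For $A^-(f)$ I would express $f$ in the Pauli basis and check that $\psi(f)$ sends $E_{1,j}\mapsto E_{n,j}$ and kills the other $U^+$-generators, so it stabilizes $U^+$ nilpotently and $\mathrm{tr}(\psi(f)|_{U^+})=0$; likewise $\ad(E_{i+1,i})$ stabilizes $U^+$ acting by strictly decreasing the column index, so its trace on $U^+$ vanishes. For the two $A^+$ identities the $\g$-parts $(x_\theta)_0\vacc=(h_\theta)_0\vacc=0$ drop out, and it remains to apply the trace formula inside $F(\ov{\mathfrak s})$ with $U^+\cap\mathfrak s=\Span(\phi_1(h_2),\phi_1(f))$: one computes $R_e:\phi_1(f)\mapsto-\phi_1(h_2)\mapsto 0$ (nilpotent, trace $0$) and $R_h$ with eigenvalues $+1,-1$ (trace $0$), so $\Theta_{\mathfrak s}(R_e)_0\vacc=\Theta_{\mathfrak s}(R_h)_0\vacc=0$, whence $A^+(e)_0\vacc=A^+(h)_0\vacc=0$.

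The main obstacle is the trace formula itself: obtaining the correct sign and the normalization $\tfrac12\mathrm{tr}$ requires care with the zero-mode normal ordering in the Ramond (twisted) sector, where a genuine vacuum shift appears — this shift is precisely what distinguishes these computations from their Neveu--Schwarz analogues in Lemma \ref{parameterextremal}, where $\widetilde J^0_0$ acted by $r_1$ while here $A^+(h)_0\vacc=0$. The secondary, purely bookkeeping, difficulty is verifying in each case that the operator really stabilizes $U^+$ (resp.\ $U^+\cap\mathfrak s$) before the formula may be invoked; this is where the explicit $J,K$-eigenvalues of the $U^+$-generators coming from Example \ref{Ex2} are used.
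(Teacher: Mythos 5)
Your proposal is correct and follows essentially the same route as the paper: both reduce each identity to zero modes acting on $\vacc$ via the normal-ordering expansion of $\Theta_V(T)_0$, observe that the relevant operator stabilizes $U^+$ (resp.\ $U^+\cap\mathfrak s$), and evaluate the resulting trace case by case. Your uniform ``spinor trace formula'' $\Theta_V(T)_0\vacc=\tfrac12\mathrm{tr}(T|_{U^+})\vacc$ is just a clean repackaging of the paper's repeated computation $\tfrac14\mathrm{Tr}_V(T)-\tfrac12\mathrm{Tr}_{U^-}(T)=-\tfrac12\mathrm{Tr}_{U^-}(T)$, and all your individual eigenvalue/nilpotency checks agree with the paper's.
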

\begin{proof}
If $a,b\in\q$, by \cite[(2.47)]{DK}, 
\begin{equation}\label{:ab:}
:\ov a\ov b:_0=\half(a,b)+\sum_{j\ge 0}\left(\ov a_{-1-j}\ov b_{1+j}-\ov b_{-j}\ov a_j\right).
\end{equation} 
Recall  that $L_{f}(x)=\phi_1(f\phi_{-1}(x))$ for all $x\in\mathfrak s$ (cf. \eqref{s}). Moreover 
$$
\phi_1^{-1}(U^{+}\cap\mathfrak s)=\left\{\begin{pmatrix} 0  & 0 \\a & b\end{pmatrix}\right\},\quad \phi_1^{-1}(U^{-}\cap\mathfrak s)=\left\{\begin{pmatrix} c  & d \\0 & 0\end{pmatrix}\right\}.
$$
It follows that $
L_{f}(U^{+}\cap\mathfrak s)= 0$ and $
L_{f}(U^{-}\cap\mathfrak s)\subset U^{+}$.
 Note also that 
$$
ad(x_{-\theta})(U^{+}\cap \mathfrak f_1)\subset U^{+}\cap \mathfrak f_1, \quad ad(x_{-\theta})(U^{-}\cap \mathfrak f_1)\subset U^{-}\cap \mathfrak f_1.
$$
Since $L_{f}$, $R_e$, and $ad(x_{-\theta})$ are nilpotent, 
$$Tr_{\mathfrak s}(L_{f})=Tr_{\mathfrak s}(R_{e})=Tr_{\mathfrak f_1}(ad(x_{-\theta}))=Tr_{U^{-}\cap\mathfrak f_1}(ad(x_{-\theta}))=0.
$$
Recall from \eqref{A-} that $A^-(f)=\Theta_{\mathfrak s}(L_f)+\Theta_{\mathfrak f_1}(ad(x_{-\theta}))$. Choose a basis $\{u_i\}$ of $U^{+}$ and let $\{u^i\}$ be the corresponding dual basis of $U^{-}$. By the above observations, we find
$$
\begin{aligned}
&A^-(f)_0\mathbf 1=
\left(\Theta_{\mathfrak s}(L_{f})_0+ \Theta_{\mathfrak f_1}(ad(x_{-\theta}))_0\right)\cdot\mathbf 1\\
&=\tfrac{1}{4}Tr_{\mathfrak s}(L_{f})+\tfrac{1}{4}Tr_{\mathfrak f_1}(ad(x_{-\theta}))-\half\sum_{q_i\in\mathfrak s}\ov q^i_0(\ov{L_{f}(q_i))}_0\vacc-\half\sum_{q_i\in\mathfrak f_1}\ov q^i_0\ov{ad(x_{-\theta})(q_i)}_0\vacc\\
&=-\half\sum_{u_i\in U^{+}\cap\mathfrak f_1}(\ov u_i)_0\ov{ad(x_{-\theta})(u^i)}_0\vacc=-\tfrac{1}{2}Tr_{U^{-}\cap\mathfrak f_1}(ad(x_{-\theta}))\vacc=0.
\end{aligned}
$$
Recall from \eqref{Re} that $R_{e}(x)=-\phi_1(\phi_{-1}(x)e)$ for all $x\in\mathfrak s$. We choose $\{\phi_1(h_1),\phi_1(e),\phi_1(f),$ $\phi_1(h_2)\}$ as a basis of $\mathfrak s$ so that $\{-\phi_1(h_2),\phi_1(f),\phi_1(e),-\phi_1(h_1)\}$ is its dual basis. It follows that
$$
A^+(e)\vacc=\Theta_{\mathfrak s}(R_{e})_0\mathbf 1=-\half\sum_{u_i\in U^{+}\cap\mathfrak s}(\ov u_i)_0\ov{R_e(u^i)}_0\vacc=-\half Tr_{U^{-}}(R_e)\vacc=0.
$$
Since $A^-(h)=\Theta_{\mathfrak s}(L_h)+\Theta_{\mathfrak f_1}(ad(h_\theta))$, we have
$$
\begin{aligned}
&A^-(h)_0\mathbf 1=
\left(\Theta_{\mathfrak s}(L_{h})_0+ \Theta_{\mathfrak f_1}(ad(h_{\theta}))_0\right)\cdot\mathbf 1\\
&=\tfrac{1}{4}Tr_{\mathfrak s}(L_{h})+\tfrac{1}{4}Tr_{\mathfrak f_1}(ad(h_{\theta}))-\half\sum_{q_i\in\mathfrak s}\ov q^i_0(\ov{L_{h}(q_i))}_0\vacc-\half\sum_{q_i\in\mathfrak f_1}\ov q^i_0\ov{ad(h_{\theta})(q_i)}_0\vacc.
\end{aligned}
$$
Since $L_hx=x$ for $x\in \mathfrak s\cap U^{-}$, $ad(h)(E_{j,1})=-E_{j,1}$, and $ad(h)(E_{j,n})=E_{j,n}$,
$$
\begin{aligned}
&A^-(h)_0\mathbf 1=-\half\sum_{u_i\in\mathfrak s\cap U^{+} } (\ov u_i)_0\ov u^i_0\vacc+\half\sum_{j=2}^{n-1}(\ov E_{1,j})_0(\ov E_{j,1})_0\vacc-\half\sum_{j=2}^{n-1}(\ov E_{n,j})_0(\ov E_{j,n})_0\vacc=-\vacc.
\end{aligned}
$$

Next we compute
$$
\begin{aligned}
&A^+(h)_0\mathbf 1=
\Theta_{\mathfrak s}(R_{h})_0\cdot\mathbf 1=\tfrac{1}{4}Tr_{\mathfrak s}(R_{h})-\half\sum_{q_i\in\mathfrak s}\ov q^i_0(\ov{R_{h}(q_i))}_0\vacc=-\half\sum_{u_i\in U^{+}\cap\mathfrak s}(\ov u_i)_0(\ov{R_{h}(u^i))}_0\vacc\\
&=-\half Tr_{U^{-}}(R_h)\vacc=0.
\end{aligned}
$$
It remains to compute
$$
\Theta_\q(ad(E_{i+1,i}))_0\vacc=\tfrac{1}{4}Tr_\q(ad(E_{i+1,i})\vacc-\half\sum_i\ov q^i_0\ov{ad(E_{i+1,i})(q_i)}_0\vacc.
$$
Since $U^{\pm,-+}$ are stable under the action of $\aa$, we can rewrite the above formula as
$$
\Theta_\q(ad(E_{i+1,i}))_0\vacc=\tfrac{1}{4}Tr_\q(ad(E_{i+1,i}))-\half Tr_{U^{-}}(ad(E_{i+1,i})\vacc,
$$
and, since $ad(E_{i+1,i})$ is nilpotent, we conclude that $\Theta_\q(ad(E_{i+1,i}))_0\vacc=0$.
\end{proof}

\begin{definition}
We say that a vector in a $\s_R$-twisted $W^{k}_{\min}(D(2,1;a))$-module $M$ is singular if (4.14), (4.15), and (4.16) of \cite{KMPR} hold. As explained in \cite[Section 6]{KMPR}, there are two types of singular vectors depending on the action of $G^{+-}_0$ and $G^{-+}_0$: a vector $v$
is called {\sl $+-$-singular} (resp.  {\sl $-+$-singular}) if 
\begin{equation}\label{singularn>0}
X_n v=0,\ n\in\nat,\ X=J^s,J^{'s}, G^{t},  L,\quad s\in\{+,0,-\},\ t\in\{++,+-,-+,--\}
\end{equation}
and
\begin{equation}\label{singularn=0}
X_0 v=0,\ X=J^{+},  J^{'+}, G^{++}, G^{+-} \text{ (resp. $G^{-+}$)}.
\end{equation}

If $s\in\{+-,-+\}$, a $(s)$--highest weight module in the Ramond sector is a  $\s^R$-twisted $W^k_{\min}(D(2,1;a))$-module that is generated by a $(s)$-singular vector. Given $(\mu,\ell)\in (\h^\natural)^*\times \C$, we let $L^{s}(\mu,\ell)$ denote the irreducible $(s)$--highest weight module of highest weight $(\mu,\ell)$.
\end{definition}

In the rest of this Section, we will give an explicit construction of extremal $\s_R$-twisted irreducible $(-+)$--highest weight modules over $W^{k}_{\min}(D(2,1;a))$. For a brief discussion of the construction of $(+-)$--highest weight modules,  see Remark \ref{finalremark} below.
\begin{lemma}\label{singularR}
Fix $0\le r_1\le k_1$. Let  $\Omega(r_1,r_2)$ be  the vector in  $L(r_1\omega_1)\otimes F(\ov \q,U^{+})$ defined by
$$
\begin{cases}
\Omega(r_1,0)=v_{r_1\omega_1}\otimes\vacc,\\
\Omega(r_1,r_2)=v_{r_1\omega_1}\otimes (\ov{E}_{n-r_2,1})_{0}\dots (\ov{E}_{n-1,1})_{0}\vacc,\ r_2=1,\ldots,n-2.
\end{cases}
$$
Then the vectors $\Omega(r_1,r_2)$ are $(-+)$-singular vectors for $W^{k}_{\min}(D(2,1;a))$.
\end{lemma}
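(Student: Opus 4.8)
The plan is to follow the Neveu--Schwarz argument of Lemma~\ref{singularNS} verbatim in structure, replacing the half-integer modes of $F(\ov\q)$ by the integer modes of the twisted Clifford module $F(\ov\q,U^+)$ and using Lemma~\ref{theta1} in place of the elementary vacuum identities. By the definition of $(-+)$-singularity I must establish $X_n\Omega(r_1,r_2)=0$ for all $n\ge1$ and $X\in\{J^s,J^{'s},G^t,\hat L\}$, together with the four zero-mode relations $X_0\Omega(r_1,r_2)=0$ for $X\in\{J^+,J^{'+},G^{++},G^{-+}\}$. Throughout I abbreviate $\nu=r_1\omega_1$ and use that the generating fermions $\ov E_{j,1}$ ($n-r_2\le j\le n-1$) lie in $\ov{\mathfrak f_1}$.

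First I would treat the currents. By \eqref{Jtildas} each $J^s$, $J^{'s}$ differs from the coset current $\widetilde J^s=\pm A^+(\cdot)$, $\widetilde J^{'s}=\pm A^-(\cdot)$ of \eqref{campi} by a normally ordered $\sigma$-bilinear; since $\sigma^{\pm\pm}\in\ov{\mathfrak s}$ and $(\mathfrak s,\mathfrak f_1)=0$, these bilinears commute with the $\ov E_{j,1}$, and I would check that their relevant modes annihilate $\vacc$ (here one uses $\sigma^{++}_0\vacc=\sigma^{-+}_0\vacc=0$, valid because $\phi_1(h_2),\phi_1(f)\in U^+$, and that the pairings $(\phi_1(e),\phi_1(h_2))$ and $(\phi_1(f),\phi_1(h_2))$ vanish). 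Hence $J^s$ and $\widetilde J^s$ act identically on $\Omega(r_1,r_2)$, and the verification reduces to the action of $A^\pm(x)_m$, carried out via Proposition~\ref{Apamdef}, Lemma~\ref{theta1}, the commutator rule \eqref{thetaTv}, and the highest-weight property of $v_\nu$: for $m\ge1$ the $\g$-current part kills $v_\nu$ and the $\Theta_\q$-part, commuted through the fermions, terminates on a positive fermionic mode of $\vacc$; the two zero-mode conditions $J^+_0=-A^+(e)_0$ and $J^{'+}_0=-A^-(f)_0$ follow from $A^+(e)_0\vacc=A^-(f)_0\vacc=0$ of Lemma~\ref{theta1}, the arising commutators $\ov{[x_{-\theta},E_{j,1}]}$ vanishing since $[E_{n,1},E_{j,1}]=0$. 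The same computation records the weights $J^0_0\Omega=r_1\Omega$ and $J^{'0}_0\Omega=r_2\Omega$.

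The crucial point is the single identity $(G^{-+})_0\Omega=0$; all remaining $G$- and $\hat L$-conditions follow formally from it together with the current relations just established. From \eqref{LlambdaG} one has $[J^+_0,G^{-+}_0]=-(G^{++})_0$, so $(G^{++})_0\Omega=0$ follows from $(G^{-+})_0\Omega=0$ and $J^+_0\Omega=0$. For $m\ge1$, the relations $[J^0_m,G^{-+}_0]=-(G^{-+})_m$ and $[J^0_m,G^{++}_0]=(G^{++})_m$ give $(G^{-+})_m\Omega=(G^{++})_m\Omega=0$ (using $J^0_m\Omega=0$), while $[J^{'-}_m,G^{++}_0]=(G^{+-})_m$ and $[J^{'-}_m,G^{-+}_0]=(G^{--})_m$ give $(G^{+-})_m\Omega=(G^{--})_m\Omega=0$ (using $J^{'-}_m\Omega=0$). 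Finally, the $\lambda^0$-part of \eqref{GlambdaGbis} yields $[G^{++}_0,G^{--}_m]=k\hat L_m+(\text{current bilinears and }\partial\text{-currents})_m$; since $G^{++}_0\Omega=0$, $G^{--}_m\Omega=0$ for $m\ge1$, and the current terms vanish on the current-singular vector $\Omega$, we obtain $\hat L_m\Omega=0$ for $m\ge1$.

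It remains to prove $(G^{-+})_0\Omega=0$, and this is where the genuine work lies. As in Lemma~\ref{singularNS}, I would use \eqref{GtildaandG} to write $G^{-+}=\sqrt{k}\,\widetilde G^{-+}+(\text{corrections})$, the corrections being normally ordered products of $J^{'+},J^-,J^0,J^{'0},\xi$ with $\sigma^{--},\sigma^{++},\sigma^{-+}$ together with a cubic $\sigma$-term; because the $\sigma$'s commute with the $\ov E_{j,1}$ their zero modes reduce to the action on $\vacc$, which I would control using Lemma~\ref{theta1} and the explicit $\xi$-action, showing $(G^{-+}-\sqrt{k}\,\widetilde G^{-+})_0\Omega=0$. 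The final step $\widetilde G^{-+}_0\Omega=0$ I would obtain from $\widetilde G^{-+}=K(\widetilde G^{--})$ and the cubic expression \eqref{gpgm}--\eqref{easyG--} for $\widetilde G^{--}$, expanding $\widetilde G^{-+}_0$ and evaluating term by term in the twisted module, where $\vacc$ is annihilated by the modes $\ge0$ of $U^+$ and the modes $\ge1$ of $U^-$. I expect the main obstacle to be precisely this twisted zero-mode bookkeeping: unlike the NS computation at mode $1/2$, the zero mode entangles the quadratic and cubic pieces with the nontrivial Clifford action of the fermionic zero modes on $\vacc$, and the decisive cancellation should come, as in the NS case, from the isotropy of the relevant $\pm\sqrt{-1}$-eigenspace together with the Jacobi identity applied to the triple of fermions $\ov E_{j,1}$.
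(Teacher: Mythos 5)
Your proposal reproduces the paper's proof essentially step for step: reduce the current conditions to the operators $A^{\pm}(x)_m$ and dispose of them via Lemma \ref{theta1} and the highest‑weight property of $v_\nu$; reduce all remaining $G$‑ and $\hat L$‑conditions to the single identity $G^{-+}_0\Omega(r_1,r_2)=0$ using \eqref{LlambdaG} and \eqref{GlambdaGbis} (your explicit commutator bookkeeping here is, if anything, more complete than what the paper writes down); strip off the correction terms of \eqref{GtildaandG} using $\sigma^{++}_0\vacc=\sigma^{-+}_0\vacc=0$ together with the already established $J^{+}_0\Omega=J^{'+}_0\Omega=0$; and finally attack $\widetilde G^{-+}_0\Omega=0$ through $\widetilde G^{-+}=K(\widetilde G^{--})$ and the cubic expression \eqref{easyG--}. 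The one computation you defer is exactly where the paper does its real work, and your prediction of the mechanism there is not quite right: at mode zero the normal ordering of the cubic fermion term produces, besides the manifestly vanishing pieces, an extra contraction $3\sum_i\ov{K([u_i,u^i]_{\mathfrak u^+_J})}_0\Omega(r_1,r_2)$, which dies not by isotropy or the Jacobi identity but because $\sum_i[u_i,u^i]_{\mathfrak u^+_J}$ is a multiple of $\phi_1(h_1)$ and $K(\phi_1(h_1))$ is proportional to $\phi_1(f)\in U^{+}$ (and $\mathfrak s\perp\mathfrak f_1$, so this zero mode slides past the $(\ov E_{j,1})_0$ onto $\vacc$); the surviving quadratic and cubic pieces then vanish by $(E_{n,j})_0v_\nu=0$, by membership of the relevant vectors in $U^{+}$ or in the isotropic space $\mathfrak u^-_J$, and by the commutativity $[E_{t,1},E_{s,1}]=0$ — no Jacobi identity is needed, unlike in the computation of $[G^+{}_\la G^+]$ in Section 4. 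So the route is the paper's, but the ``twisted zero‑mode bookkeeping'' you flag as the main obstacle contains a genuinely new ingredient of the Ramond case (the dependence on the specific choice of $U^{+}$) that your sketch does not yet supply.
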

\begin{proof}Set $\nu=r_1\omega_1$. Since $\s^{++}_0\mathbf 1=0$, in order to check that 
\begin{equation}\label{jsingularR}
J^{'+}_0\Omega(r_1,r_2)=J^{'-}_1\Omega(r_1,r_2)=J^{+}_0\Omega(r_1,r_2)=J^{-}_1\Omega(r_1,r_2)=0,
\end{equation} 
we can argue as in the proof of Lemma \ref{singularNS}, so that
it is enough to check that 
\begin{equation}\label{jtildesingularR}
\widetilde J^{'+}_0\Omega(r_1,r_2)=\widetilde J^{'-}_1\Omega(r_1,r_2)=\widetilde J^{+}_0\Omega(r_1,r_2)=\widetilde J^{-}_1\Omega(r_1,r_2)=0.
\end{equation}

By \eqref{thetaTv} and \cite[(2.49)]{DK},
\begin{equation}\label{thetaramond}
[\Theta_\q(ad(x))_m,\ov q_{0}]=\ov{ad(x)(q)}_{m}
\end{equation}
so,   using  Lemma \ref{theta1},
$$
\begin{aligned}
A^-(e)_1\cdot\Omega(r_1,r_2)&=\sum_jv_{\nu}\otimes \left((\ov{E}_{n-r_2,1})_{0}\cdots\ov{[E_{1,n},E_{j,1} ]}_{1}\cdots(\ov{E}_{n-1,1})_{0}\vac\right)=0,
\end{aligned}
$$ 
$$
\begin{aligned}
A^-(f)_0\cdot\Omega(r_1,r_2)&=\sum_jv_{\nu}\otimes \left((\ov{E}_{n-r_2,1})_{0}\cdots\ov{[E_{n,1},E_{j,1} ]}_{0}\cdots(\ov{E}_{n-1,1})_{0}\vac\right)\\
&+v_{\nu}\otimes (\ov{E}_{n-r_2,1})_{0}\cdots(\ov{E}_{n-1,1})_{0}A^-(f)_0\mathbf 1=0,
\end{aligned}
$$ 
$$
\begin{aligned}
A^+(e)_0\cdot\Omega(r_1,r_2)&=(x_\theta)_0v_{\nu}\otimes\left((\ov{E}_{n-r_2,1})_{0}\cdots (\ov{E}_{n-1,1})_{0}\mathbf 1\right)\\
&+v_{\nu}\otimes A^+(e)_0(\ov{E}_{n-r_2,1})_{0}\cdots (\ov{E}_{n-1,1})_{0}\mathbf 1\\
&=v_{\nu}\otimes (\ov{E}_{n-r_2,1})_{0}\cdots (\ov{E}_{n-1,1})_{0}A^+(e)_0\mathbf 1=0,
\end{aligned}
$$
and
$$
\begin{aligned}
A^+(f)_1\cdot\Omega(r_1,r_2)&=(x_{-\theta})_1v_{\nu}\otimes\left((\ov{E}_{n-r_2,1})_{0}\cdots (\ov{E}_{n-1,1})_{0}\mathbf 1\right)\\
&+v_{\nu}\otimes \Theta_{\mathfrak s}(R_{f})_1(\ov{E}_{n-r_2,1})_{0}\cdots (\ov{E}_{n-1,1})_{0}\mathbf 1=0,
\end{aligned}
$$
and these equations amount to \eqref{jtildesingularR} which in turn implies \eqref{jsingularR}.

We now check, that $G^{-+}_0\Omega(r_1,r_2)=0$.
As in the proof of Lemma \ref{singularNS}, we first check that 
$G^{-+}_0\Omega(r_1,r_2,)=\widetilde G^{-+}_0\Omega(r_1,r_2)$.

Since $[J^t_\la\s^{\pm\pm}]=[J^{'t}_\la\s^{\pm\pm}]=0$, \cite[(2.47)]{DK} gives that
$ :J^t\s^{\pm\pm}:_0=\sum_{j\in\ZZ}J^t_{-j}\s^{\pm\pm}_j=\sum_{j\in\ZZ}\s^{\pm\pm}_{-j}J^t_{j}$ and $ :J^{'t}\s^{\pm\pm}:_0=\sum_{j\in\ZZ}J^{'t}_{-j}\s^{\pm\pm}_j=\sum_{j\in\ZZ}\s^{\pm\pm}_{-j}J^{'t}_{j}$.
This implies that
$$
:J^t\s^{\pm\pm}:_0\Omega(r_1,r_2)=(-1)^{r_2}J^{t}_{0}(v_{\nu}\otimes  (\ov{E}_{n-r_2,1})_{0}\dots (\ov{E}_{n-1,1})_{0}\s^{\pm\pm}_{0}\vac.
$$
In particular, since $\s^{++}_0\mathbf 1= \s^{-+}_0\mathbf 1=0$, we deduce that $:J^t\s^{++}:_0\Omega(r_1,r_2)=:J^t\s^{-+}:_0\Omega(r_1,r_2)=0$.
The same argument shows that $:\xi \sigma^{-+}:_0\Omega(r_1,r_2)=
       :\sigma^{-+}\sigma^{--}\sigma^{++}:_0\Omega(r_1,r_2)=0$. Finally, since $:J^t\s^{\pm\pm}:_0\Omega(r_1,r_2)=s^{\pm\pm}_0J^t_0\Omega(r_1,r_2)$, we see from the first part of the proof that $:J^+\s^{--}:_0\Omega(r_1,r_2)=:J^{'+}\s^{--}:_0\Omega(r_1,r_2)=0$.
       The outcome of this discussion is that $G^{-+}_0\Omega(r_1,r_2)=\widetilde G^{-+}_0\Omega(r_1,r_2)=0$ as desired.
       
Recall that  $\widetilde G^{-+}=K(\widetilde G^{--})$ and that, by our choice of $k_0$ when $n=2$,  $\widetilde G^{--}$ is given by \eqref{easyG--}. Thus,
to compute $K(\widetilde G^{--})_0\Omega(r_1,r_2)$, it is enough to  choose a basis of $\mathfrak u_J^+$ as in Lemma \ref{singularNS} and compute
$$
(2\sum_i:u^i\overline{K(u_i)}:_0-\sum_{i,j}:\overline{K([u^i,u^j])}\overline{K(u_i)}\overline{K(u_j)}:_0)\Omega(r_1,r_2).
$$
By \cite[(2.47)]{DK}, 
$:u^i\overline{K(u_i)}:_{0}=\sum_{j\in\ZZ}u_{j}^i(\ov{K(u_i)})_{-j}$, so
$$
\begin{aligned}
&\sum_i:u^i\overline{K(u_i)}:_{0}\Omega(r_1,r_2)=\sum_iu_{0}^iv_{\nu}\otimes (\overline{K(u_i)}_{0}(\ov{E}_{n-r_2,1})_{0}\dots (\ov{E}_{n-1,1})_{0}\mathbf 1.
\end{aligned}
$$
We note that $K(u_i)\in\mathfrak u_J^-$ and, since $\mathfrak u_J^-$ is isotropic, we find 
$$
\begin{aligned}
&\sum_i:u^i\overline{K(u_i)}:_{0}\Omega(r_1,r_2)=(-1)^{r_2}\sum_iu_{0}^iv_{\nu}\otimes (\ov{E}_{n-r_2,1})_{0}\dots (\ov{E}_{n-1,1})_{0}\overline{K(u_i)}_{0}\mathbf 1\\
&=-(-1)^{r_2}\sum_{j=2}^{n-1}(E_{j,1})_{0}v_{\nu}\otimes (\ov{E}_{n-r_2,1})_{0}\dots (\ov{E}_{n-1,1})_{0} (\ov{E}_{n,j})_{0}\mathbf 1\\
&-(-1)^{r_2}\sum_{j=2}^{n-1}(E_{n,j})_{0}v_{\nu}\otimes (\ov{E}_{n-r_2,1})_{0}\dots (\ov{E}_{n-1,1})_{0}(\overline{E}_{j,1})_{0}\mathbf 1\\
&+(-1)^{r_2}\phi_1(h_2)_{0}v_{\nu}\otimes (\ov{E}_{n-r_2,1})_{0}\dots (\ov{E}_{n-1,1})_{0} (\ov{E}_{n,1})_{0}\mathbf 1\\
&+(-1)^{r_2}(E_{n,1})_{0}v_{\nu}\otimes (\ov{E}_{n-r_2,1})_{0}\dots (\ov{E}_{n-1,1})_{0}\overline{\phi_1(h_2)}_{0}\mathbf 1.
\end{aligned}
$$
As observed in the proof of Lemma \ref{singularNS}, $(E_{n,j})_{0}v_{\nu}=0$. By our choice of $U^{+}$, we have $(\ov{E}_{n,j})_{0}\mathbf 1=0$ for $1\le j\le n-1$ and $\overline{\phi_1(h_2)}_{0}\mathbf 1=0$. The outcome is that $\sum_i:u^i\overline{K(u_i)}:_{0}\Omega(r_1,r_2)=0$.

It remains to check that 
$$
\sum_{i,j}:\overline{K([u^i,u^j])}\overline{K(u_i)}\overline{K(u_j)}:_0
\Omega(r_1,r_2)=0.
$$
Applying \cite[(2.47)]{DK} twice, we find
$$
\begin{aligned}
&:\overline{K([u^i,u^j])}\overline{K(u_i)}\overline{K(u_j)}:_0=\half ([u^i,u^j],u_i)\ov{K( u_j)}_0-\half ([u^i,u^j],u_j)\ov{K( u_i)}_0\\
&+\sum_{r,t\ge0}\left(\overline{K([u^i,u^j])}_{-1-r}\overline{K(u_i)}_{-1-t}\overline{K(u_j)}_{r+2+t}-\overline{K([u^i,u^j])}_{-1-r}\overline{K(u_j)}_{r+1-t}\overline{K(u_i)}_{t}\right)\\
&+\sum_{r,t\ge0}\left(\overline{K(u_i)}_{-t-1}\overline{K(u_j)}_{-r+t+1}\overline{K([u^i,u^j])}_{r}-\overline{K(u_j)}_{-r-t}\overline{K(u_i)}_{t}\overline{K([u^i,u^j])}_{r}\right)
\end{aligned}
$$
so
$$
\begin{aligned}
&\sum_{i,j}:\overline{K([u^i,u^j])}\overline{K(u_i)}\overline{K(u_j)}:_0
\Omega(r_1,r_2)=\sum_i\ov{K([u_i,u^i]_{\mathfrak u_J^+})}_0\Omega(r_1,r_2)\\
&-\sum_{i,j}\overline{K(u_j)}_{0}\overline{K(u_i)}_{0}\overline{K([u^i,u^j])}_{0}\Omega(r_1,r_2).
\end{aligned}
$$
Note that 
$$
\begin{aligned}
&\sum_{i,j}\overline{K(u_j)}_{0}\overline{K(u_i)}_{0}\overline{K([u^i,u^j])}_0=\sum_{i,j}\overline{K([u^i,u^j])}_0\overline{K(u_j)}_{0}\overline{K(u_i)}_{0}+2\sum_i\overline{K([u_i,u^i]_{\mathfrak u_J^+})}_{0},
\end{aligned}
$$
hence
$$
\begin{aligned}
&\sum_{i,j}:\overline{K([u^i,u^j])}\overline{K(u_i)}\overline{K(u_j)}:_0
\Omega(r_1,r_2)=3\sum_i\ov{K([u_i,u^i]_{\mathfrak u_J^+})}_0\Omega(r_1,r_2)\\
&+\sum_{i,j}\overline{K([u^i,u^j])}_0\overline{K(u_j)}_{0}\overline{K(u_i)}_{0}\Omega(r_1,r_2).
\end{aligned}
$$
Since $\sum_iK([u_i,u^i]_{\mathfrak u_J^+})=(n-1)K(\phi_1(h_1))=(n-1)\phi_1(f)$, it follows that 
$$\sum_i\ov{K([u_i,u^i]_{\mathfrak u_J^+})}_0\Omega(r_1,r_2)=0.
$$
Finally
$$
\begin{aligned}
&\sum_{i,j}:\overline{K([u^i,u^j])}\overline{K(u_i)}\overline{K(u_j)}:_0
\Omega(r_1,r_2)\\
&=2\!\!\!\!\!\!\!\!\!\!\sum_{n-r_2\le t<s\le n-1}\!\!\!\!\!\!(-1)^{s+r+1}v_\nu\otimes\overline{K([E_{t,1},E_{s,1}])}_0(\ov{E}_{n-r_2,1})_{0}\cdots \widehat{(\ov E_{t,1})_{0}}\cdots\widehat{(\ov{E}_{s,1})_{0}}\cdots(\ov{E}_{n-1,1})_{0}\mathbf 1=0.
\end{aligned}
$$

It remains only to check that $\hat L_m\Omega(r_1,r_2)=0$ when $m>0$. We checked above that $\Omega(r_1,r_2)$ is singular for $D(2,1;a)^\natural=\langle J^0,J^+,J^-\rangle\oplus\langle J^{'0},J^{'+},J^{'-}\rangle$ hence, if $x,y\in D(2,1;a)^\natural$ and $m>0$,
$:xy:_m\Omega(r_1,r_2)$ as well as $x_m\Omega(r_1,r_2)$, and $(\partial x)_m\Omega(r_1,r_2)$ are all zero. Thus, by \eqref{GlambdaGbis},
$$
\hat L_m\Omega(r_1,r_2)=[G^{++}_{0},G^{--}_{m}]\Omega(r_1,r_2)=0
$$
as wished.
\end{proof}

\begin{lemma}\label{parameterextremalRamond}
Given $r_1,r_2\in \ZZ_+$ with $0\le r_1\le k_1$ and $0\le r_2\le n-2$, set $\mu=\tfrac{r_1}{2}\theta_1+\tfrac{r_2}{2}\theta_2$ and $\mathbb L^{-+}(r_1,r_2)$ the $W^{k}_{\min}(D(2,1;a))$-module generated by $\Omega(r_1,r_2)$. 
Then, as $W^{k}_{\min}(D(2,1;a)$-module
$$\mathbb L^{-+}(r_1,r_2)=L^{-+}(\mu,A(k,\mu))
$$
with
$$
\begin{aligned}
A(k,\mu)&=-\frac{(a+1)^2 k (k+1)+a\left((r_1+r_2+1)^2\right)}{4 (a+1)^2
   k}\\
   &=\frac{k_1 (n-2)+r_1^2+2 \left(r_2+1\right) r_1+r_2 \left(r_2+2\right)}{4
   \left(k_1+n\right)}.
   \end{aligned}
   $$
\end{lemma}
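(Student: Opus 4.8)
The plan is to follow the strategy of Lemma~\ref{parameterextremal}, adapted to the $\sigma_R$-twisted setting. By Lemma~\ref{singularR} the vector $\Omega(r_1,r_2)$ is a $(-+)$-singular vector, so $\mathbb L^{-+}(r_1,r_2)$ is a $(-+)$-highest weight module; it therefore suffices to compute the eigenvalues of $J^0_0$ and $J^{'0}_0$ (which determine $\mu$) and of $\hat L_0$ (which gives the minimal energy $\ell$), and then to check $\ell=A(k,\mu)$. The identification $\mathbb L^{-+}(r_1,r_2)=L^{-+}(\mu,A(k,\mu))$ follows verbatim as in Lemma~\ref{parameterextremal} once these eigenvalues are known.

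First I would compute the weight. As in the proof of Lemma~\ref{singularR}, the fields $\sigma^{\pm\pm}$ lie in $\ov{\mathfrak s}$ and satisfy $[\sigma^{\pm\pm}{}_\lambda\mathfrak f_1]=0$, so the $\sigma$-bilinear corrections relating $J^0,J^{'0}$ to $\widetilde J^0,\widetilde J^{'0}$ in \eqref{Jtildas} commute with the creation operators $(\ov E_{j,1})_0$ and act on $\Omega$ only through their value on the ground state $\vacc$. Using $\widetilde J^0=A^+(h)$, $\widetilde J^{'0}=-A^-(h)$, formula \eqref{thetaramond} and Lemma~\ref{theta1}, one finds $\widetilde J^0_0\Omega=r_1\Omega$ and $\widetilde J^{'0}_0\Omega=(r_2+1)\Omega$. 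The key Ramond feature is that $\tfrac1k:\sigma^{--}\sigma^{++}:_0\vacc=\tfrac1k:\sigma^{+-}\sigma^{-+}:_0\vacc=-\tfrac12\vacc$; these zero-mode fermion-number shifts cancel in $J^0$ but add up to $-\Omega$ in $J^{'0}$, so that $J^0_0\Omega=r_1\Omega$ and $J^{'0}_0\Omega=r_2\Omega$, i.e. $\mu=\tfrac{r_1}2\theta_1+\tfrac{r_2}2\theta_2$.

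Next, for the minimal energy I would use the decomposition $\hat L=\widetilde L-L^\xi-L^{\ov{\mathfrak s}}$ together with $\widetilde L=L^\g-L^{\aa}+L^{\ov\q}$ from \eqref{L}, and evaluate each summand on $\Omega$. The term $L^\g_0$ acts by the Casimir eigenvalue $\tfrac{(\nu,\nu+2\rho)}{2(k_1+n)}$ with $\nu=r_1\omega_1$. For $L^{\aa}_0$ I would first check, using $\Theta_\q(ad(E_{i+1,i}))_0\vacc=0$ from Lemma~\ref{theta1} and \eqref{thetaramond}, that $\Omega$ is a \emph{lowest} weight vector for the affine $\aa\cong sl(n-2)$ (annihilated by the mode-zero lowering operators $(E^{\aa}_{i+1,i})_0$, $2\le i\le n-2$, and by all positive modes); its quadratic Casimir then coincides with the one appearing in Lemma~\ref{parameterextremal}, so $L^{\aa}_0$ contributes the same term $\tfrac{(n-1)r_2(n-2-r_2)}{2(n-2)(k_1+n)}$ as in the Neveu--Schwarz case. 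The fermionic terms $L^{\ov\q}_0$ and $L^{\ov{\mathfrak s}}_0$ act by the Ramond vacuum energies $\tfrac{n-1}4$ and $\tfrac14$ respectively, since $\Omega$ is obtained from the ground state by applying only the zero modes $(\ov E_{j,1})_0$. Finally, after checking that the positive modes of $\xi$ annihilate $\Omega$, I would compute $L^\xi_0=\tfrac1{2k}\xi_0^2$ from \eqref{specialxi}: here $(d_1)_0$ contributes $\nu(d_1)=r_1\sqrt{\tfrac{2-n}n}$ on $v_\nu$, $\Theta_\q(ad(d_1))_0$ contributes $-r_2\tfrac{n}{n-2}\sqrt{\tfrac{2-n}n}$ through the oscillators, and---crucially---$\Theta_\q(ad(d_1))_0\vacc=-\tfrac12 Tr_{U^-}(ad(d_1))\vacc=n\sqrt{\tfrac{2-n}n}\,\vacc$ on the ground state.

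Assembling these, a direct computation shows that the $L^\g_0$-, $L^{\aa}_0$- and $\tfrac1{2k}\xi_0^2$-terms combine with the two Ramond vacuum energies so that the total collapses to $A(k,\mu)=\tfrac{k_1(n-2)+r_1^2+2(r_2+1)r_1+r_2(r_2+2)}{4(k_1+n)}$; in particular the nonvanishing ground-state term $\Theta_\q(ad(d_1))_0\vacc$ is exactly what shifts the Neveu--Schwarz answer (with its $(r_1-r_2)^2$) into the Ramond answer (with $(r_1+r_2+1)^2$). I expect the main obstacle to be precisely the correct bookkeeping of these Ramond zero-mode and vacuum-energy contributions---the nonvanishing of $\Theta_\q(ad(d_1))_0\vacc$ and of the $\sigma$-number operators on $\vacc$, together with the vacuum energies of $F(\ov\q)$ and $F(\ov{\mathfrak s})$---since it is their interplay, rather than any single Casimir, that produces the shift distinguishing the massless Ramond bound from its Neveu--Schwarz counterpart.
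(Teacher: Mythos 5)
Your proposal is correct and follows essentially the same route as the paper's proof: the same decomposition $\hat L=L^\g-L^{\aa}+L^{\ov\q}-L^\xi-L^{\ov{\mathfrak s}}$, the same identification of the Ramond zero-mode shifts ($A^-(h)_0\vacc=-\vacc$, $:\sigma^{--}\sigma^{++}:_0\vacc=:\sigma^{+-}\sigma^{-+}:_0\vacc=-\tfrac{k}{2}\vacc$, and $\Theta_\q(ad(d_1))_0\vacc=-\tfrac12 Tr_{U^-}(ad(d_1))\vacc$), and the same Ramond vacuum energies $\tfrac{n-1}{4}$ and $\tfrac14$. All the individual contributions you list match the paper's computation, including the observation that the $\aa$-Casimir term is unchanged from the Neveu--Schwarz case because $\Omega(r_1,r_2)$ is now a lowest-weight vector for the opposite Borel of $\aa$.
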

\begin{proof}
Set $\nu=r_1\omega_1$. 
Using \eqref{thetaramond}, arguing as in the proof of Lemma \ref{parameterextremal},
$$
\begin{aligned}
&\widetilde J^{'0}_0\Omega(r_1,r_2)=-A^-(h)\Omega(r_1,r_2)=\sqrt{-1}v_{\nu}\otimes \Theta_\q(J)_0\left((\ov{E}_{n-r_2,1})_{0}\dots (\ov{E}_{n-1,1})_{0}\vacc\right)\\
&=\sqrt{-1}\sum_jv_{\nu}\otimes 
\left((\ov{E}_{n-r_2,1})_{0}\cdots\ov{[J_1,E_{j,1} ]}_{0}\cdots(\ov{E}_{n-1,1})_{0}\vacc\right)\\
&-v_{\nu}\otimes 
\left((\ov{E}_{n-r_2,1})_{0}\cdots
(\ov{E}_{n-1,1})_{0}A^-(h)_0\vacc \right),
\end{aligned}
$$
so, by Lemma \ref{theta1},
$$
\begin{aligned}
&&\widetilde J^{'0}_0\Omega(r_1,r_2)=\sum_jv_{\nu}\otimes 
\left((\ov{E}_{n-r_2,1})_{0}\cdots(\ov{E}_{j,1} )_{0}\cdots(\ov{E}_{n-1,1})_{0}\vacc\right)=(r_2+1)\Omega(r_1,r_2).
\end{aligned}
$$
Since $E_{j,1}\in\mathfrak f_1$ for $2\le j\le n-1$,
$$
\begin{aligned}
&\widetilde J^{0}_0\Omega(r_1,r_2)=A^+(h)_0\Omega(r_1,r_2)=-\sqrt{-1}(J_1)_0v_{\nu}\otimes \left((\ov{E}_{n-r_2,1})_{0}\cdots(\ov{E}_{n-1,1})_{0}\vacc\right)\\
&+v_{\nu}\otimes \left((\ov{E}_{n-r_2,1})_{0}\cdots(\ov{E}_{n-1,1})_{0}A^+(h)_0\vacc\right)=r_1\Omega(r_1,r_2).
\end{aligned}
$$
On the other hand, by \eqref{:ab:},
$$
:\s^{--}\s^{++}:_0\vacc=\half k\vacc-\s^{++}_0\s^{--}_0\vacc=-\half k\vacc,\ :\s^{+-}\s^{-+}:_0\vacc=\half k\vacc-\s^{-+}_0\s^{+-}_0\vacc=-\half k\vacc.
$$
Since $E_{j,1}\in\mathfrak f_1$ when $2\le j\le n-1$ and $\s^{\pm\pm}\in\ov{\mathfrak s}$, we see that $[:\s^{\pm\pm}\s^{\pm\pm}:_0,(\ov E_{j,1})_0]=0$, thus, by \eqref{Jtildas},
$$
\begin{aligned}
&J^{'0}_0\Omega(r_1,r_2)= \left(\widetilde{J}^{\prime 0}_0  +\tfrac{1}{k}: \sigma^{--}\sigma^{++}:_0
           + \tfrac{1}{k}:\sigma^{+-} \sigma^{-+}:_0\right)\Omega(r_1,r_2)\\
           &= (r_2+1)\Omega(r_1,r_2)+\tfrac{1}{k}v_\nu\otimes  (\ov{E}_{n-r_2,1})_{0}\cdots(\ov{E}_{n-1,1})_{0}\left(: \sigma^{--}\sigma^{++}:_0+:\sigma^{+-} \sigma^{-+}:_0\right)\vacc\\
           &=r_2\Omega(r_1,r_2),
           \end{aligned}
$$
as wished. Similarly
$$
\begin{aligned}
&J^{0}_0\Omega(r_1,r_2)= \left(\widetilde{J}^{0}_0  +\tfrac{1}{k}: \sigma^{--}\sigma^{++}:_0
           - \tfrac{1}{k}:\sigma^{+-} \sigma^{-+}:_0\right)\Omega(r_1,r_2)\\
           &= r_1\Omega(r_1,r_2)+\tfrac{1}{k}v_\nu\otimes  (\ov{E}_{n-r_2,1})_{0}\cdots(\ov{E}_{n-1,1})_{0}\left(: \sigma^{--}\sigma^{++}:_0-:\sigma^{+-} \sigma^{-+}:_0\right)\vacc\\
           &=r_1\Omega(r_1,r_2).
           \end{aligned}
$$
It remains to check the action of $\hat L_0$. As in Lemma \ref{parameterextremal}, we start by checking that $\Omega(r_1,r_2)$ is singular for $V^\a(\aa)$ with respect to the Borel subalgebra spanned by $\h\cap\aa$ and $E_{i+1,i}$ with $2\le i\le n-2$:
$$
\begin{aligned}
(E_{i+1,i}^\aa)_0\cdot\Omega(r_1,r_2)&=v_{\nu}\otimes \Theta_\q(ad(E_{i+1,i}))_0\left((\ov{E}_{n-r_2,1})_{0}\cdots (\ov{E}_{n-1,1})_{0}\vacc\right)\\
&=\sum_jv_{\nu}\otimes \left((\ov{E}_{n-r_2,1})_{0}\cdots\ov{[E_{i+1,i},E_{j,1} ]}_{0}\cdots(\ov{E}_{n-1,1})_{0}\vacc\right)\\
&+v_{\nu}\otimes (\ov{E}_{n-r_2,1})_{0}\cdots(\ov{E}_{n-1,1})_{0}\Theta_\q(ad(E_{i+1,i}))_0\vacc.
\end{aligned}
$$

Moreover, since $\aa=0$ if $n=2$, we can assume $n>2$ and compute
$$
\begin{aligned}
(E_{2,n-1}^\aa)_1\cdot\Omega(r_1,r_2)&=v_{\nu}\otimes \Theta_\q(ad(E_{2,n-1}))_1\left((\ov{E}_{n-r_2,1})_{0}\cdots (\ov{E}_{n-1,1})_{0}\vacc\right)\\
&=v_{\nu}\otimes \left((\ov{E}_{n-r_2,1})_{0}\cdots\ov{[E_{2,n-1},E_{n-1,1} ]}_{1}\vacc\right)\\
&+v_{\nu}\otimes \left((\ov{E}_{n-r_2,1})_{0}\cdots(\ov E_{n-1,1})_{0}\Theta_\q(ad(E_{2,n-1}))_1\vacc\right)=0.
\end{aligned}
$$ 
We now check that $\Omega(r_1,r_2)$ is singular for $V^{k}(\C\xi)$. We use our assumption on $\k$ so that we can assume that $\xi$ is given by \eqref{specialxi}. Then, if $s>0$ and $n=2$,  then $r_2=0$ and
$$
(d_1)_s\Omega(r_1,0)=((d_1)_sv_{\nu})\otimes \vacc=0.
$$
If $s>0$ and $n>2$, then
$$
\begin{aligned}
&(d_1)_s\Omega(r_1,r_2)=(d_1)_sv_{\nu}\otimes \left((\ov{E}_{n-r_2,1})_{0}\cdots (\ov{E}_{n-1,1})_{0}\vacc\right)\\
&+v_{\nu}\otimes \Theta_\q(ad(d_1))_s\left((\ov{E}_{n-r_2,1})_{0}\cdots (\ov{E}_{n-1,1})_{0}\vacc\right)\\
&=\sum_jv_{\nu}\otimes \left((\ov{E}_{n-r_2,1})_{0}\cdots\ov{[d_1,E_{j,1} ]}_{s}\cdots(\ov{E}_{n-1,1})_{-1/2}\vac\right)\\
&=-\sqrt{-1}\sqrt{\frac{n}{n-2}}\sum_jv_{\nu}\otimes \left((\ov{E}_{n-r_2,1})_{0}\cdots(\ov E_{j,1} )_{s}\cdots(\ov{E}_{n-1,1})_{0}\vacc\right)=0.
\end{aligned}
$$
Arguing as in Lemma \ref{parameterextremal}, taking into account the change in the choice of a Borel subalgebra for $\aa$ and the fact that $L^{\bar\q}_0\Omega(r_1,r_2)=\tfrac{1}{16}\dim \q\,\Omega(r_1,r_2)$, we find
$$ 
\begin{aligned}&\widetilde L_0\Omega(r_1,r_2)= \left(\frac{(\nu,\nu+2\rho)}{2(k_1+n)}-
\frac{(\nu^\aa_{r_1,r_2},\nu^\aa_{r_1,r_2}-2\rho^\aa)}{2(k_1+n)}+\tfrac{1}{16} \dim \q\right)\Omega(r_1,r_2),
\end{aligned}
$$
where $\nu^\aa_{r_1,r_2}$ is the weight of $\Omega(r_1,r_2)$ under the action of $V^\a(\aa)$.
 If $h\in\h_\aa$ then, since $\nu(h)=0$, 
$$
\begin{aligned}
&h^\aa_0\Omega(r_1,r_2)=v_{\nu}\otimes \Theta_\q(ad(h))_0\left((\ov{E}_{n-r_2,1})_{0}\cdots (\ov{E}_{n-1,1})_{0}\vacc\right)\\
&=\sum_jv_{\nu}\otimes \left((\ov{E}_{n-r_2,1})_{0}\cdots\ov{[h,E_{j,1} ]}_{0}\cdots(\ov{E}_{n-1,1})_{0}\vacc\right)\\
&+v_{\nu}\otimes \left((\ov{E}_{n-r_2,1})_{0}\cdots(\ov{E}_{n-1,1})_{0}\Theta_\q(ad(h))_0\vacc\right)\\
&=-\omega^\aa_{n-2-r_2}(h)\Omega(r_1,r_2)+v_{\nu}\otimes (\ov{E}_{n-r_2,1})_{0}\cdots(\ov{E}_{n-1,1})_{0}\Theta_\q(ad(h))_0\vacc.
\end{aligned}
$$
Since
$$
\Theta_\q(ad(h))_0\vacc=\tfrac{1}{4}Tr_\q(ad(h))-\half\sum \ov q^i_0ad(h)(q_i)_0\vacc=-\half Tr_{U^{-}}(ad(h))\vacc=0,
$$
we find that \eqref{tildeLomega} holds with
$$\widetilde\ell_0=\begin{cases}\frac{n-1}{2(k_1+n)}\left(\frac{ r_1 (n+r_1)}{n}-\frac{r_2 \left(n-r_2-2\right)}{n-2}\right)+\frac{n-1}{4}&\text{if $n>2$}\\
\frac{n-1}{2(k_1+n)}\frac{ r_1 (n+r_1)}{n}+\frac{1}{4}&\text{if $n=2$.}
\end{cases}$$
Since $\hat L=\widetilde L-L^{\xi}-L^{\ov{\mathfrak s}}$, we need to compute the action of  $L^{\xi}_0$ and of $L^{\ov{ \mathfrak s}}_0$  
 on $\Omega(r_1,r_2)$.
Since $\ov E_{j,1}\in\mathfrak f_1$ for $2\le i\le n-1$, we see that
$[L^{\ov{ \mathfrak s}}_0, (\ov E_{j,1})_{0}]=0$.
Moreover $
L^{\ov{ \mathfrak s}}_0\vacc=\tfrac{1}{4}\vacc$, so 
$$
L^{\ov{ \mathfrak s}}_0\Omega(r_1,r_2)=\tfrac{1}{4}\Omega(r_1,r_2).
$$
  Since $\Omega(r_1,r_2)$ is singular for $V^{k}(\C\xi)$, 
  $$
  L^{\xi}_0\Omega(r_1,r_2)=\frac{1}{2k}\xi_0^2\Omega(r_1,r_2).
  $$
Since
$$
\begin{aligned}
(d_1)_0\Omega(r_1,r_2)&=(d_1)_0v_{\nu}\otimes \left((\ov{E}_{n-r_2,1})_{0}\cdots (\ov{E}_{n-1,1})_{0}\vacc\right)\\
&+v_{\nu}\otimes \Theta_\q(ad(d_1))_0\left((\ov{E}_{n-r_2,1})_{0}\cdots (\ov{E}_{n-1,1})_{0}\vacc\right)\\
&=\nu(d_1)\Omega(r_1,r_2)+\sum_jv_{\nu}\otimes \left((\ov{E}_{n-r_2,1})_{0}\cdots\ov{[d_1,E_{j,1} ]}_{0}\cdots(\ov{E}_{n-1,1})_{0}\vacc\right)\\
&-\half Tr_{U^{-.-+}}(ad(d_1))v_{\nu}\otimes \left((\ov{E}_{n-r_2,1})_{0}\cdots(\ov{E}_{n-1,1})_{0}\vacc\right)\\
&=\begin{cases}(\nu(d_1)-\sqrt{\frac{2-n}{n}}n(\frac{r_2}{n-2}-1)\Omega(r_1,r_2)&\text{if $n>2$},\\
\nu(d_1)\Omega(r_1,0)&\text{if $n=2$},
\end{cases}
\end{aligned}
$$
and
$\nu(d_1)=\sqrt{\frac{2-n}{n}}r_1$, the final result is that
$$
(d_1)_0\Omega(r_1,r_2)=\begin{cases}\sqrt{\frac{2-n}{n}}(r_1-\frac{n}{n-2}r_2+n)\Omega(r_1,r_2)&\text{if $n>2$},\\
0&\text{if $n=2$},
\end{cases}
$$
so 
$$
L^\xi_0\Omega(r_1,r_2)=\begin{cases}\frac{((n-2) (n+r_1)-n
   r_2)^2}{4 (n-2) n
   (k_1+n)}&\text{if $n>2$},\\
0&\text{if $n=2$}.
\end{cases}
$$
Thus, if $n>2$,
$$
\ell_0=\widetilde \ell_0-\frac{((n-2) (n+r_1)-n
   r_2)^2}{4 (n-2) n
   (k_1+n)}-\frac{1}{4}=\frac{k_1 (n-2 ) + r_1^2 + 2 r_1 (r_2+1) + r_2 ( r_2+2)}{4 (k_1 + n)}
$$
as claimed.

If $n=2$, since $r_2=0$,
$$
\ell_0=\widetilde \ell_0-\frac{1}{4}=\frac{r_1^2+2 r_1}{4 \left(k_1+2\right)}=A(k,\mu)
$$
as well.
\end{proof}
Recall the conjugate linear involution $\omega_{\ov\q}$ of $\q$, defined in Section \ref{Application}, and observe that the Hermitian form $(\omega_{\ov \q}(\cdot),\cdot)$ is negative definite on $\q$.
This implies that  $F(\ov \q,U^{+})$ is a $\s^R$-twisted unitary representation of $F(\ov \q)$, i. e. $F(\ov \q,U^{+})$ admits a $\omega_{\ov \q}$-invariant Hermitian form $H^R_{\ov \q}$ that is positive definite. We normalize $H^R_{\ov \q}$ so that $H^R_{\ov q}(\mathbf 1,\mathbf 1)=1$.
\begin{prop}\label{extremalareunitaryR}Set $a=\frac{k_1+1}{n-1}$ and $k=-\frac{(k_1+1)(n-1)}{k_1+n}$. Fix $0\le r_1\le k_1$ and $0\le r_2\le n-2$. 
Then $(H_{r_1}\otimes H^R_{\ov\q})_{|\mathbb L^{-+}(r_1,r_2)\times\mathbb L^{-+}(r_1,r_2)}$  is a positive definite $\omega$-invariant form on $\mathbb L^{-+}(r_1,r_2)$.

In particular, if $\mu=\tfrac{r_1}{2}\theta_1+\tfrac{r_2}{2}\theta_2$, the highest weight $W^{\min}_{k}(D(2,1;a))$-module $L^{-+}(\mu,A(k,\mu))$ is unitary.
\end{prop}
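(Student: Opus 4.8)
The plan is to follow verbatim the strategy used for Proposition \ref{extremalareunitary}, adapting each step to the $\s_R$-twisted setting. The starting observation is that positive definiteness comes for free: $H_{r_1}$ is positive definite on $L(r_1\omega_1)$ because $L(r_1\omega_1)$ is a unitary $V^\k(\g)$-module, and $H^R_{\ov\q}$ is positive definite on the twisted Clifford module $F(\ov\q,U^+)$ by the remark preceding the statement (which uses that $(\omega_{\ov\q}(\cdot),\cdot)$ is negative definite on $\q$). Hence $(H_{r_1}\otimes H^R_{\ov\q})$ is a positive definite Hermitian form on $L(r_1\omega_1)\otimes F(\ov\q,U^+)$, and its restriction to any subspace, in particular to $\mathbb L^{-+}(r_1,r_2)$, remains positive definite. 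The entire burden therefore falls on establishing $\omega$-invariance of the restriction.

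For invariance I would first record that $(H_{r_1}\otimes H^R_{\ov\q})$ is $\overset{\circ}{\omega}\otimes\omega_{\ov\q}$-invariant for the action of $V^\k(\g)\otimes F(\ov\q)$, where now the $F(\ov\q)$-factor acts by its $\s_R$-twisted fields on $F(\ov\q,U^+)$, the relevant conformal vector being $L^\g+L^{\ov\q}$. The goal is to upgrade this to $\omega$-invariance with respect to the $\s_R$-twisted action of $\widetilde W=\Gamma(W^{k}_{\min}(D(2,1;a)))$, whose conformal vector is $\hat L=(L^\g+L^{\ov\q})-(L^\aa+L^\xi+L^{\ov{\mathfrak s}})$. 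Exactly as before, the commutation $[\hat L_\la U]=0$, with $U$ the vertex subalgebra generated by $\aa'$ and $\ov{\mathfrak s}$, yields $[\hat L_1,(L^\aa+L^\xi+L^{\ov{\mathfrak s}})_1]=0$, hence the factorization $e^{z\hat L_1}=e^{z(L^\g+L^{\ov\q})_1}e^{-z(L^\aa+L^\xi+L^{\ov{\mathfrak s}})_1}$ together with the analogous factorization of $(-z^{-2})^{\hat L_0}$, all of which are identities of operators on the twisted module. Consequently the twisted $\omega$-invariance of the restriction to $\widetilde W$ reduces, just as in the Neveu--Schwarz case, to the single annihilation statement $(L^\aa_1+L^\xi_1+L^{\ov{\mathfrak s}}_1)w=0$ for all $w\in\widetilde W$.

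This annihilation is a statement internal to the vertex algebra $V^\k(\g)\otimes F(\ov\q)$ and is therefore independent of the sector: it was already proved in the course of Proposition \ref{extremalareunitary}, where one shows $[(L^\xi+L^{\ov{\mathfrak s}})_\la\widetilde W]=0$ and then reduces $L^\aa_1 w=0$ to $[(x+\Theta_\q(ad(x)))_\la\widetilde G]=0$ for $x\in\aa$, the latter following from \eqref{GU} via $x+\Theta_\q(ad(x))=\tilde x-\Theta_\aa(ad(x))$ and $\widetilde G=\tfrac{1}{2\sqrt{k_1+n}}(G_\g-G_\aa)$. Invoking this, $\omega$-invariance of $(H_{r_1}\otimes H^R_{\ov\q})_{|\mathbb L^{-+}(r_1,r_2)}$ follows. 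Finally, combining positive definiteness with invariance and using the identification $\mathbb L^{-+}(r_1,r_2)=L^{-+}(\mu,A(k,\mu))$ from Lemma \ref{parameterextremalRamond} gives unitarity of $L^{-+}(\mu,A(k,\mu))$.

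The main obstacle I anticipate is in the twisted bookkeeping rather than in any new algebra: one must verify that the twisted notion of $\omega$-invariant form, with $A(z)=e^{zL_1}(-z^{-2})^{L_0}\omega(\cdot)$ built from the twisted field expansions, really does reduce to the same annihilation condition. The commuting zero-modes $\hat L_0$, $(L^\aa+L^\xi+L^{\ov{\mathfrak s}})_0$ and $(L^\g+L^{\ov\q})_0$ must be checked to factor $(-z^{-2})^{\hat L_0}$ compatibly on the twisted module, and one must confirm that the shifts in the $L_0$-eigenvalues recorded in Lemma \ref{parameterextremalRamond} do not disturb this factorization; once this is in place the argument is formally identical to the untwisted one.
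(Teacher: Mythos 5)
Your proposal is correct and follows essentially the same route as the paper, which simply observes that the proof of Proposition \ref{extremalareunitary} carries over verbatim: positive definiteness is inherited by restriction, and $\omega$-invariance reduces to the sector-independent annihilation condition $(L^\aa_1+L^\xi_1+L^{\ov{\mathfrak s}}_1)w=0$ for $w\in\widetilde W$ already established there. Your extra care about the twisted bookkeeping is reasonable but introduces nothing beyond what the paper's one-line argument implicitly relies on.
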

\begin{proof}
The same proof of Proposition \ref{extremalareunitary} shows that $(H_{r_1}\otimes H^R_{\ov\q})_{|\mathbb L^{-+}(r_1,r_2)\times\mathbb L^{-+}(r_1,r_2)}$ is a $\omega$-invariant Hermitian form which is  manifestly positive definite.
\end{proof}

\begin{remark}\label{finalremark}
Lemma \ref{parameterextremalRamond} proves that the representations $\mathbb L^{-+}(0,r_2)$ and $\mathbb L^{-+}(r_1,n-2)$ realize the Ramond twisted extremal representations of \cite{KMPR} corresponding to choosing $\eta_{\min}=\epsilon_2-\epsilon_3$ (see \cite[10.5]{KMPR}). Thus Proposition \ref{extremalareunitaryR} proves their unitarity.

The Ramond twisted extremal representations corresponding to choosing $\eta_{\min}=-\epsilon_2+\epsilon_3$ are precisely the irreducible Ramond extremal $(+-)$--highest weight modules. These can be constructed in the same way by simply choosing
$$
U^{+}=span(\{E_{j,1}\mid 2\le j\le n-1\}\cup\{E_{j,n}\mid 2\le j\le n-1\}\cup\{\phi_1(h_2),\phi_1(e)\}),
$$
and
$$
U^{-}=span(\{E_{1,j}\mid 2\le j\le n-1\}\cup\{E_{n,j}\mid 2\le j\le n-1\}\cup\{\phi_1(h_1),\phi_1(f)\}),
$$
as maximal isotropic subspaces of $\q$. In particular, the obvious analog of Proposition \ref{extremalareunitaryR} proves their unitarity. Thus, we have proved that all extremal Ramond twisted representations of $W^{\min}_{k}(D(2,1;a))$ are unitary.
\end{remark}

\subsection*{Acknowledgements}
Pierluigi M\"oseneder Frajria and Paolo Papi  are partially supported by the PRIN project 2022S8SSW2 - Algebraic and geometric aspects of Lie theory - CUP B53D2300942 0006, a project cofinanced
by European Union - Next Generation EU fund.
Victor Kac is partially supported by a Simons grant.


\vskip5pt
    \footnotesize{
\noindent{\bf V.K.}: Department of Mathematics, MIT, 77
Mass. Ave, Cambridge, MA 02139;\newline
{\tt kac@math.mit.edu}
\vskip5pt
\noindent{\bf P.M-F.}: Politecnico di Milano, Polo regionale di Como,
Via Anzani 42, 22100, Como, Italy;\newline {\tt pierluigi.moseneder@polimi.it}
\vskip5pt
\noindent{\bf P.P.}: Dipartimento di Matematica, Sapienza Universit\`a di Roma, P.le A. Moro 2,
00185, Roma, Italy;\newline {\tt papi@mat.uniroma1.it}, Corresponding author
}

\end{document}